\documentclass{amsart}%
\usepackage{palatino, mathpazo}
\usepackage{amsfonts}
\usepackage{amsmath}
\usepackage{amssymb,latexsym,xcolor}
\usepackage{graphicx}
\usepackage[mathscr]{eucal}
\usepackage{harpoon}
\usepackage{amssymb}
\usepackage[
linktocpage=true,colorlinks,citecolor=magenta,linkcolor=blue,urlcolor=magenta]{hyperref}
\setcounter{MaxMatrixCols}{30}

\allowdisplaybreaks[4]
\providecommand{\U}[1]{\protect \rule{.1in}{.1in}}

\newtheorem{theorem}{Theorem}[section]

\newtheorem{corollary}[theorem]{Corollary}
\newtheorem{definition}[theorem]{Definition}
\newtheorem{example}[theorem]{Example}
\newtheorem{Lemma}[theorem]{Lemma}
\newtheorem{Proposition}[theorem]{Proposition}
\newtheorem{Theorem}{Theorem}

\theoremstyle{remark}
\newtheorem{remark}[theorem]{Remark}

\numberwithin{equation}{section}

\def\R{\mathbb R}

\def\dl{\Delta}
\def\e{\varepsilon}
\def\om{\Omega}
\def\iy{\infty}

\def\rt{\rightarrow}

\def\bt{\begin{theorem}}
\def\et{\end{theorem}}
\def\bl{\begin{Lemma}}
\def\el{\end{Lemma}}
\def\bd{\begin{definition}}
\def\ed{\end{definition}}
\def\bc{\begin{corollary}}
\def\ec{\end{corollary}}
\def\bprop{\begin{Proposition}}
\def\eprop{\end{Proposition}}
\def\bp{\begin{proof}}
\def\ep{\end{proof}}
\def\bx{\begin{example}}
\def\ex{\end{example}}
\def\br{\begin{remark}}
\def\er{\end{remark}}
\def\be{\begin{equation}}
\def\ee{\end{equation}}
\def\bal{\begin{align}}
\def\bn{\begin{enumerate}}
\def\en{\end{enumerate}}
\def\eal{\end{align}}

\def\bg{\begin{align*}}
\def\eg{\end{align*}}
\def\bcs{\begin{cases}}
\def\ecs{\end{cases}}

\def\RNum#1{\uppercase\expandafter{\romannumeral #1\relax}}

\def\bean{\begin{eqnarray*}}
\def\eean{\end{eqnarray*}}

\setcounter{tocdepth}{1}

\begin{document}
\title[Lane-Emden Problems]{Sharp estimates and non-degeneracy of low energy nodal solutions for the Lane-Emden equation in dimension two}
\author{Zhijie Chen}
\address{Department of Mathematical Sciences, Yau Mathematical Sciences Center,
Tsinghua University, Beijing, 100084, China}
\email{zjchen2016@tsinghua.edu.cn}
\author{Zetao Cheng}
\address{Department of Mathematical Sciences,
Tsinghua University, Beijing, 100084, China}
\email{chengzt20@mails.tsinghua.edu.cn}
\author{Hanqing Zhao}
\address{Department of Mathematical Sciences,
Tsinghua University, Beijing, 100084, China}
\email{zhq20@mails.tsinghua.edu.cn}

\keywords{}


\begin{abstract}
We study the Lane-Emden problem 
\[
	\begin{cases}
    -\Delta u_p =|u_p|^{p-1}u_p\quad&\text{in}\quad \Omega,\\
    u_p=0 \quad&\text{on}\quad\partial\Omega,
    \end{cases}
\]
where $\Omega\subset\mathbb R^2$ is a smooth bounded domain and $p>1$ is sufficiently large. We obtain sharp estimates and non-degeneracy of low energy nodal solutions $u_p$ (i.e. nodal solutions satisfying 
$\lim_{p\to+\infty}p\int_{\Omega}|u_p|^{p+1}dx=16\pi e$).
As applications,
we prove that the comparable condition $p(\|u_p^+\|_{\infty}-\|u_p^-\|_{\infty})=O(1)$ holds automatically for least energy nodal solutions, which confirms a conjecture raised by (Grossi-Grumiau-Pacella, Ann. I. H. Poincar\'{e}-AN, 30 (2013), 121-140) and (Grossi-Grumiau-Pacella,  J. Math. Pures Appl. 101 (2014), 735–754). 
\end{abstract}
\maketitle

\section{Introduction}
\label{section-1}

In this paper, we study the Lane–Emden equation
\begin{align}\label{p_p}
     \begin{cases}\tag{$\mathscr{P}_p$}
    -\Delta u_p =|u_p|^{p-1}u_p\quad&\text{in}\quad \Omega,\\
    u_p=0 \quad&\text{on}\quad\partial\Omega,
    \end{cases}
\end{align}
where $\Omega\subset\mathbb R^2$ is a smooth bounded domain and $p>1$.
This equation has been widely studied in the literature, and it seems impossible for us to list all the references. We refer the reader to \cite{asy1-3,15,1,8,13,12,19,2,14,4,7,wei-jmaa} and references therein.
It is well known that solutions of \eqref{p_p} are critical points of the functional $J_p: H_0^1(\om)\rt\R$ given by
\begin{equation}\label{11}
    J_p(u):=\frac{1}{2}\int_{\om}|\nabla u|^2dx-\frac{1}{p+1}\int_{\om}|u|^{p+1}dx.
\end{equation}
A nontrivial solution $u$ of \eqref{p_p} is called \emph{a least energy  solution} if the corresponding energy $J_p(u)$ is the smallest among all nontrivial solutions. The asymptotic behavior of least energy solutions of \eqref{p_p} as $p\to\infty$ was studied in \cite{asy1-3,7,wei-jmaa}, where among other things, it was proved that
least energy solutions satisfy 
\[\lim_{p\to+\infty}p\int_{\Omega}|u_p|^{p+1}dx=8\pi e.\]
Recently, for high-energy positive solutions of \eqref{p_p}, i.e. under the condition
\[\limsup_{p\to+\infty}p\int_{\Omega}|u_p|^{p+1}dx\leq \beta\]
with $\beta>8\pi e$,
the asymptotic behavior has been obtained in a series of papers \cite{15,8,13,4}.

On the other hand, in contrast with positive solutions, the asymptotic behavior of nodal solutions is more difficult to study and remains large open.
Here a solution $u$ of \eqref{p_p} is called a \emph{nodal} solution, if $u^{\pm}\neq0$, where $u^{+}=\max\{u,0\}$ and $u^{-}=\min\{u,0\}$.
A nodal solution $u$ is called \emph{a least energy nodal solution} if the corresponding energy $J_p(u)$ is the smallest among all nodal solutions.

Our motivation of this paper comes from the seminal work of Grossi, Grumiau and Pacella \cite{2}, where they studied the asymptotic behavior of the so-called {\it low energy nodal solutions}, i.e. nodal solutions satisfying
\begin{align}\label{c1.8}\tag{A}
    \lim_{p\to\infty}p\int_{\Omega}|u_p|^{p+1}dx = 16\pi e.
\end{align}
They proved that least energy nodal solutions always satisfy \eqref{c1.8}, i.e. are low energy nodal solutions.
For a nodal solution $u_p$ of \eqref{p_p}, we denote $\Vert u_p\Vert_{\infty}=\max_{x\in \Omega}|u_p(x)|$, and let $x_p^+$ (resp. $x_p^-$) be a maximum (resp. minimum) point of $u_p$ in $\om$, i.e.
\begin{align}\label{c1.12}
    u_p(x_p^+)=\max_{x\in \Omega}u_p(x)=\Vert u_p^+ \Vert_\infty,\quad
u_p(x_p^-)=\min_{x\in \Omega}u_p(x)=-\Vert u_p^- \Vert_\infty.
\end{align}
Without loss of generality, we may always assume
\begin{align}\label{c1.12^*}
    \Vert u_p\Vert_{\infty}=\Vert u_p^+\Vert_{\infty},\quad\text{i.e.}\quad \Vert u_p^+ \Vert_\infty\geq \Vert u_p^- \Vert_\infty.
\end{align}
To introduce the main result of \cite{2}, we recall the well-known Kirchoff-Routh type function  $\Psi:\mathcal{M}=\Omega^2\backslash\{(x,y)\in \Omega^2 : x= y\}\rightarrow \mathbb{R}$ defined by
\begin{align}\label{eq-1-13^2}
     \Psi(a_1,a_2):=\sum\limits_{i=1}^2\Psi_i(a_1,a_2)
     =2G(a_1,a_2)-R(a_1)-R(a_2),
\end{align}
with
\[\Psi_i(a_1,a_2):=G(a_1,a_2)-R(a_i).\]
Here $G(x,\cdot)$ denote the Green function of $-\Delta$ in $\Omega$ with the Dirichlet boundary condition, i.e.
\begin{align}\label{eq-1-5^*}
     \begin{cases}
    -\Delta G(x,\cdot) =\delta_x\quad&\text{in}\quad \Omega,\\
    G(x,\cdot) =0 \quad&\text{on}\quad\partial\Omega,
    \end{cases}
\end{align}
where $\delta_x$ is the Dirac function, $H(x,y)$ is the regular part of $G(x,y)$ given by
\begin{align}\label{eq-1-6^*}
    G(x,y)=-\frac{1}{2\pi}\log|x-y|+H(x,y),
\end{align}
and $R(x):=H(x,x)$.

Grossi, Grumiau and Pacella \cite{2} proved the following interesting result.

\begin{Theorem}\cite{2}\label{thm-LE}
Let $(u_p)_{p>0}$ be a family of low energy nodal solutions of \eqref{p_p}. 
Assume that
\begin{align}\label{eq-B-1}\tag{B}
    p\left(\Vert u_p^+\Vert_{\infty}-\Vert u_p^-\Vert_{\infty}\right)=O(1)\quad\text{ as }\quad p\rt\iy,
\end{align}
then $\|u_p^{\pm}\|_\infty\to \sqrt{e}$ and
the following statements hold up to a subsequence.
\begin{itemize}
\item[(1)] There are $x^+, x^-\in \Omega$ with $x^+\neq x^-$ such that
\[pu_p(x)\to 8\pi\sqrt{e}(G(x,x^+)-G(x,x^-))\quad\text{in }C^2_{loc}(\overline{\Omega}\setminus\{x^+,x^-\}).\]
\item[(2)] The pair $(x^+, x^-)$ is a critical point of the Kirchoff-Routh type function  $\Psi$ defined in \eqref{eq-1-13^2}, i.e. $\nabla \Psi(x^+,x^-)=0$.
\end{itemize}
\end{Theorem}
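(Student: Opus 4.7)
My plan is a classical two-bubble concentration analysis. I would proceed in three steps: a blow-up identifying a Liouville profile at each extremum, the global energy condition (A) together with (B) pinning down the common limiting height $\sqrt{e}$, and finally Green's representation plus a local Pohozaev identity to extract the outer limit and the critical point condition.

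\emph{Step 1 (Blow-up).} Introduce the concentration scales $\varepsilon_p^{\pm}:=(p\,\|u_p^{\pm}\|_{\infty}^{p-1})^{-1/2}$ and the rescaled functions
\[
z_p^{\pm}(y):=\frac{p}{\|u_p^{\pm}\|_{\infty}}\bigl(\pm u_p(x_p^{\pm}+\varepsilon_p^{\pm}y)-\|u_p^{\pm}\|_{\infty}\bigr),
\]
which attain their maximum $0$ at $y=0$ and, as $p\to\infty$, formally satisfy the Liouville equation $-\Delta U=e^{U}$ on $\mathbb{R}^{2}$. Assumption (B) combined with $\|u_p^{+}\|_{\infty}\geq\|u_p^{-}\|_{\infty}$ would first be used to show that $\|u_p^{\pm}\|_{\infty}$ stays bounded above and below and that $\varepsilon_p^{+}/\varepsilon_p^{-}\to 1$. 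Standard elliptic regularity together with the Chen--Li classification then yields $z_p^{\pm}\to U(y):=\log\bigl(1+|y|^{2}/8\bigr)^{-2}$ in $C^{2}_{\mathrm{loc}}(\mathbb{R}^{2})$, with $\int_{\mathbb{R}^{2}}e^{U}\,dy=8\pi$.

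\emph{Step 2 (Heights and the outer limit).} A change of variables gives
\[
p\int_{B_{R\varepsilon_p^{\pm}}(x_p^{\pm})}|u_p|^{p+1}\,dx=\|u_p^{\pm}\|_{\infty}^{2}\int_{B_{R}(0)}\bigl(1+z_p^{\pm}/p\bigr)^{p+1}\,dy,
\]
whose limit (first $p\to\infty$, then $R\to\infty$) equals $8\pi\,\ell_{\pm}^{2}$, where $\ell_{\pm}:=\lim\|u_p^{\pm}\|_{\infty}$. Assumption (A) then forces $\ell_{+}^{2}+\ell_{-}^{2}=2e$, and (B) (which implies $\ell_{+}=\ell_{-}$) gives $\ell_{\pm}=\sqrt{e}$. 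A contradiction argument also yields $\operatorname{dist}(x_p^{+},x_p^{-})/\varepsilon_p^{\pm}\to\infty$ (otherwise a single Liouville limit would carry both signs), so up to a subsequence $x_p^{\pm}\to x^{\pm}\in\Omega$ with $x^{+}\neq x^{-}$. Plugging this into Green's representation $u_p(x)=\int_{\Omega}G(x,y)|u_p|^{p-1}u_p(y)\,dy$, and using that $p\,|u_p|^{p-1}u_p\,dy\rightharpoonup 8\pi\sqrt{e}\,(\delta_{x^{+}}-\delta_{x^{-}})$ in the sense of measures, one obtains statement (1).

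\emph{Step 3 (Critical point of $\Psi$).} To prove (2), I would apply the local Pohozaev identity
\[
\int_{\partial B_{\delta}(x_p^{\pm})}\!\!\left[\tfrac{|u_p|^{p+1}}{p+1}\nu_{i}-\tfrac{|\nabla u_p|^{2}}{2}\nu_{i}+\partial_{i} u_p\,\partial_{\nu}u_p\right]d\sigma=0
\]
for each small fixed $\delta>0$, substitute the $C^{1}$-expansion of $pu_p$ produced by Step 2 on $\partial B_{\delta}(x_p^{\pm})$, and pass to the limit after multiplication by $p^{2}$ (the $|u_p|^{p+1}$ term, of order $(1/p)^{p+1}$, being negligible). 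Writing $W(x):=G(x,x^{+})-G(x,x^{-})=-\tfrac{1}{2\pi}\log|x-x^{\pm}|+(\text{regular part})$ near $x^{\pm}$ and evaluating the remaining two boundary integrals yields precisely the partial derivatives of the Kirchhoff--Routh function $\Psi$ at $(x^{+},x^{-})$, giving $\nabla\Psi(x^{+},x^{-})=0$. The hard part will be to propagate the Liouville limit of Step 1 outward past the intermediate neck region to obtain a $C^{1}$-expansion of $pu_p$ on $\partial B_{\delta}(x_p^{\pm})$ of the correct order $1/p$; here assumption (B) enters essentially, since only with equal leading heights $\sqrt{e}$ do the cross-interactions between the two oppositely signed bubbles assemble into $\nabla\Psi$ rather than into an unbalanced leading term.
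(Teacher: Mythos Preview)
This theorem is not proved in the present paper: it is quoted verbatim from Grossi--Grumiau--Pacella \cite{2} as background (the authors also mention an alternative proof in their earlier work \cite{CCZ}, and Section~2 recalls, without proof, the facts (P1)--(P4) taken from that source). There is therefore no ``paper's own proof'' to compare your proposal against.

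That said, your sketch is the standard route and is essentially the strategy of \cite{2}: Liouville blow-up at each extremum, energy quantization via (A)+(B) to pin down the heights $\sqrt{e}$, Green's representation for the outer limit, and a local Pohozaev identity to extract $\nabla\Psi(x^+,x^-)=0$. The outline is sound. The points you correctly flag as ``hard'' are indeed the delicate ones in the literature: (i) upgrading the inner Liouville limit to a $C^1$ expansion valid on a \emph{fixed} annulus (your neck remark in Step~3), (ii) showing that the two concentration points are distinct interior points of $\Omega$ (you only argue $\operatorname{dist}(x_p^+,x_p^-)/\varepsilon_p^{\pm}\to\infty$, which by itself does not rule out $x^+=x^-$), and (iii) checking that the full mass $16\pi e$ is carried by exactly these two bubbles with no leakage elsewhere. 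These are precisely the places where condition (B) is exploited in \cite{2}; your proposal identifies this role correctly but would need to fill in (ii) and (iii) to be complete.
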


\begin{remark}\
\begin{itemize}
    \item [(1)] 
There are counterexamples in \cite{1,14} that the condition \eqref{eq-B-1} fails for some nodal solutions of \eqref{p_p}. For example, let $v_p$ be the radial nodal solution of \eqref{p_p} such that the energy $J_p(v_p)$ is the smallest among all radial nodal solutions of \eqref{p_p}, then it was proved in \cite{14} that (assume without loss of generality that $v_p(0)>0$)
\[\lim_{p\to\infty}\|v_p^+\|_\infty \approx 2.46>\sqrt{e},\quad \lim_{p\to\infty}\|v_p^-\|_\infty \approx 1.17<\sqrt{e}.\]
\item[(2)]
 Grossi, Grumiau and Pacella \cite{2,14} conjectured that the condition \eqref{eq-B-1} should hold automatically for the least energy nodal solutions of \eqref{p_p}. As far as we know, this question still remains open.
\end{itemize}
\end{remark}

In our previous paper \cite{CCZ}, we gave an alternative proof of Theorem A without assuming the condition \eqref{eq-B-1}, but we still can not prove the condition \eqref{eq-B-1} there.
Our first result of this paper confirms Grossi, Grumiau and Pacella's conjecture \cite{2,14}.

\begin{theorem}\label{cor-1.6}
Let $(u_p)_{p>1}$ be a family of low energy nodal solutions of \eqref{p_p}. Then $p(\Vert u_p^+\Vert_{\infty}-\Vert u_p^-\Vert_{\infty})=O(1)$ as $p\to\infty$, i.e. the condition \eqref{eq-B-1} holds automatically not only for least energy nodal solutions but also for all low energy nodal solutions.
\end{theorem}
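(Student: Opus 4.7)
The plan is to deduce Theorem \ref{cor-1.6} as a direct consequence of the sharp pointwise asymptotic expansion for $\|u_p^\pm\|_\infty$ that constitutes the main technical content of the paper. The reduction is elementary: once one knows
\[
\|u_p^+\|_\infty - \sqrt{e} = O(1/p), \qquad \|u_p^-\|_\infty - \sqrt{e} = O(1/p),
\]
the triangle inequality immediately yields $p\bigl|\|u_p^+\|_\infty - \|u_p^-\|_\infty\bigr| = O(1)$, which is exactly \eqref{eq-B-1}. The whole burden is therefore to upgrade the qualitative convergence $\|u_p^\pm\|_\infty \to \sqrt{e}$, which our previous paper \cite{CCZ} already establishes without assuming \eqref{eq-B-1}, into a quantitative rate $O(1/p)$.

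To obtain this rate, I would start from the two bubble rescalings already available from \cite{CCZ}. Setting
\[
\mu_p^\pm := p^{-1/2}\, \|u_p^\pm\|_\infty^{(1-p)/2},
\]
and $v_p^\pm(y) := p\bigl(u_p(x_p^\pm + \mu_p^\pm y) \mp \|u_p^\pm\|_\infty\bigr)/\|u_p^\pm\|_\infty$, the known bubble analysis gives $v_p^\pm \to U$ in $C^2_{loc}(\R^2)$, where $U(y) = -2\log(1 + |y|^2/8)$ solves $-\Delta U = e^U$. The plan is to push this convergence one order further by writing $v_p^\pm = U + w_p^\pm$ and solving the linearised equation $L w_p^\pm = F_p^\pm + R_p^\pm$, with $L = -\Delta - e^U$, where $F_p^\pm$ collects the $O(1/p)$ terms coming from the Taylor expansion of $(1 + v/p)^p$ and from the influence of the other peak encoded by the Green's function, and $R_p^\pm$ is a genuinely lower-order remainder. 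Using the non-degeneracy of $L$ modulo its translation and dilation kernel, established as one of the main results of the paper, the remainder can be inverted in a suitable weighted space with $\|w_p^\pm\| = O(1/p)$. Unwinding the rescaling and evaluating at the concentration points produces an expansion
\[
\|u_p^\pm\|_\infty = \sqrt{e} + \frac{c_\pm(x_p^+, x_p^-)}{p} + o(1/p),
\]
with $c_\pm$ depending continuously on Green's function quantities evaluated at the critical point of $\Psi$ supplied by Theorem A; since the limiting configuration is uniformly bounded, so is $c_\pm$, and we conclude.

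The principal obstacle is that the expansion must be run \emph{without} invoking \eqref{eq-B-1} at the outset, since that is precisely what we are trying to prove. The two peaks interact through $G(\cdot,\cdot)$, so the first-order correction at $x_p^+$ depends on the precise scale $\mu_p^-$ through the contribution of the negative bubble near $x_p^+$, and symmetrically; a priori the ratio $\mu_p^+/\mu_p^-$ is controlled only qualitatively via $\|u_p^\pm\|_\infty \to \sqrt{e}$, and a naive substitution produces cross-terms of the same size as the leading error. The remedy is to set up the two expansions as a coupled system and bootstrap them simultaneously, using the full force of the non-degeneracy of the two-bubble configuration to close the system and to absorb the cross-terms. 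Once this coupled expansion is in hand, Theorem \ref{cor-1.6} follows immediately from the triangle inequality indicated above.
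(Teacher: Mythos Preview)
Your overall strategy---derive a sharp expansion of $\|u_p^\pm\|_\infty$ and then subtract---is exactly what the paper does, and your proposed mechanism (expand the rescaled profiles $v_p^\pm$ one order beyond $U$ via the linearised operator) is close in spirit to the paper's Propositions~\ref{prop54}--\ref{prop56}. However, the specific expansion you assert,
\[
\|u_p^\pm\|_\infty = \sqrt{e} + \frac{c_\pm(x_p^+,x_p^-)}{p} + o\Bigl(\frac1p\Bigr)
\quad\text{with }c_\pm\text{ bounded},
\]
is \emph{false}, and this is not a cosmetic slip: it is precisely the crux of the argument. The actual expansion (Theorem~\ref{th2}) contains a $-\sqrt{e}\,\dfrac{\log p}{p}$ term, so that $\|u_p^\pm\|_\infty-\sqrt{e}$ is only $O(\log p/p)$, not $O(1/p)$. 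A naive triangle inequality from your claimed bound would already be a wrong statement; from the true $O(\log p/p)$ bound it would only give $p\bigl|\|u_p^+\|_\infty-\|u_p^-\|_\infty\bigr|=O(\log p)$, which is not enough. The reason the theorem is true is that the $\log p/p$ term is \emph{universal}---it comes from $\log\varepsilon_{i,p}\sim -p/4$ via $\varepsilon_{i,p}^{-2}=p|u_p(x_{i,p})|^{p-1}$---and hence appears identically in both $\|u_p^+\|_\infty$ and $\|u_p^-\|_\infty$, cancelling in the difference. What survives is $\frac{4\pi\sqrt{e}}{p}\bigl(\Psi_1(x_p^*)-\Psi_2(x_p^*)\bigr)=\frac{4\pi\sqrt{e}}{p}\bigl(R(x_p^-)-R(x_p^+)\bigr)$, which is bounded because $x_p^\pm\to x^\pm\in\Omega$. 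Your write-up needs to produce and track this $\log p$ term explicitly; without it the argument does not close.

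A second omission is the step that makes the whole positive-solution machinery of \cite{4} available for a \emph{nodal} solution: one must first show (Lemma~\ref{guaguawa}) that there is a fixed radius $\rho>0$ with $u_p>0$ on $B_\rho(x_p^+)$ and $u_p<0$ on $B_\rho(x_p^-)$ for all large $p$. This is what guarantees $0<1+\frac{v_{i,p}}{p}\le 1$ on the rescaled ball and lets the decay estimate $\bigl|1+\frac{v_{i,p}}{p}\bigr|^p\le C(1+|z|)^{-(4-\delta)}$ go through; without it your linearised remainder estimate $\|w_p^\pm\|=O(1/p)$ cannot be justified. Finally, the ``non-degeneracy of $L$'' you invoke is the classification of the kernel of $-\Delta-e^U$ (Lemma~\ref{Linear repe}, taken from \cite{11}); it is an input, not one of the main results of this paper, and in fact the paper obtains the sharp expansion by Green's representation and dominated convergence rather than by inverting $L$ in weighted spaces.
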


Comparing to our previous paper \cite{CCZ}, the new stragety of proving
Theorem \ref{cor-1.6} is to prove the sharper estimates of $\|u_p^{\pm}\|_\infty$ than $\|u_p^{\pm}\|_\infty\to \sqrt{e}$, as stated as follows. 

\begin{theorem}\label{th2}
Let $(u_p)_{p>1}$ be a family of low energy nodal solutions of \eqref{p_p} and let $x_p^+, x_p^-$ be as in \eqref{c1.12}. Then for any small constant $\delta>0$,
up to a subsequence,
\begin{align}
     u_p(x_p^+)=&\sqrt{e}\left(1-\frac{\log{p}}{p}+\frac{1}{p}\left(4\pi\Psi_1(x^*_p)+3\log{2}+2\right)\right)+O\left(\frac{1}{p^{2-\delta}}\right),\label{eq-1-17}\\
      u_p(x_p^-)=&-\sqrt{e}\left(1-\frac{\log{p}}{p}+\frac{1}{p}\left(4\pi\Psi_2(x^*_p)+3\log{2}+2\right)\right)+O\left(\frac{1}{p^{2-\delta}}\right),\label{eq-1-18}
\end{align}
where $\Psi_1$, $\Psi_2$ are defined in $\eqref{eq-1-13^2}$ and  $x^*_p=(x^+_p,x^-_p)$.
\end{theorem}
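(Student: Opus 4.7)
The plan is to combine a sharp next-order asymptotic expansion of $u_p$ rescaled around $x_p^\pm$ with the Green's representation formula evaluated at those same points, tracking every contribution of size $1/p$. Set $M_\pm := \|u_p^\pm\|_\infty$, introduce the standard scalings $(\mu_p^\pm)^{-2} = pM_\pm^{p-1}$ together with the rescaled functions
\[
v_p^+(y) := \tfrac{p}{M_+}\bigl(u_p(x_p^++\mu_p^+y)-M_+\bigr),\qquad v_p^-(y) := -\tfrac{p}{M_-}\bigl(u_p(x_p^-+\mu_p^-y)+M_-\bigr),
\]
so that Theorem A (in the form established in \cite{CCZ} without invoking \eqref{eq-B-1}) gives $v_p^\pm\to U$ in $C^2_{\rm loc}(\R^2)$, where $U(y)=-2\log(1+|y|^2/8)$ solves $-\Delta U = e^U$ with $\int_{\R^2}e^U = 8\pi$. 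The first key technical ingredient is the next-order expansion
\[
v_p^\pm(y) = U(y) + \tfrac{1}{p} w_\pm(y) + O\!\left(\tfrac{(1+\log(1+|y|))^2}{p^{2-\delta}}\right)
\]
uniformly on the bubble domain, where $w_\pm$ solves the inhomogeneous linearized Liouville equation $-\Delta w_\pm - e^U w_\pm = -\tfrac12 e^U U^2$ with (unavoidable) logarithmic growth at infinity, determined modulo the three-dimensional kernel $\{\partial_{y_1}U,\partial_{y_2}U,\psi_0\}$ with $\psi_0 = (2-|y|^2/4)/(1+|y|^2/8)$ by appropriate orthogonality conditions and matching with the outer Green's-type expansion.

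The second step is to analyze
\[
u_p(x_p^+) = \int_\Omega G(x_p^+,y)\,|u_p(y)|^{p-1}u_p(y)\,dy
\]
via the splitting $G = -\tfrac{1}{2\pi}\log|\cdot|+H$ and a decomposition of $\Omega$ into $B(x_p^+,r)$, $B(x_p^-,r)$, and the complement, for a fixed small $r < \tfrac14|x^+-x^-|$. The complementary region contributes within the $O(p^{-2+\delta})$ error by the $C^2_{\rm loc}$ convergence $pu_p\to 8\pi\sqrt{e}(G(\cdot,x^+)-G(\cdot,x^-))$ away from $\{x^\pm\}$. On $B(x_p^-,r)$, the kernel $G(x_p^+,\cdot)$ is smooth; Taylor-expanding around $x_p^-$ and using the rescaling at $x_p^-$ yields $-\tfrac{8\pi\sqrt{e}}{p}G(x_p^+,x_p^-)+O(p^{-2+\delta})$, the odd first-order correction vanishing by the radial symmetry of $e^U$. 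On $B(x_p^+,r)$ we change variables $y = x_p^+ + \mu_p^+ z$, substitute $(1+v_p^+/p)^p = e^U\bigl(1+\tfrac{1}{p}(w_+ - U^2/2)\bigr) + O(p^{-2+\delta})$, Taylor-expand $H(x_p^+,x_p^++\mu_p^+z) = R(x_p^+) + O(\mu_p^+ |z|)$, and evaluate the explicit integrals $\int_{\R^2} e^U\,dz = 8\pi$, $\int_{\R^2} e^U\log|z|\,dz = 12\pi\log 2$ together with their $w_+$- and $U^2$-weighted analogues. The crucial observation is that $\log\mu_p^+ = -\tfrac12\log p - \tfrac{p-1}{2}\log M_+$ is of order $p$, so the $1/p$-correction of the profile couples with this large prefactor to produce a genuine $O(1/p)$ contribution in the Green integral. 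Collecting all pieces and posing the self-consistent ansatz $M_+ = \sqrt{e}(1 - \tfrac{\log p}{p} + \tfrac{c_+}{p}) + O(p^{-2+\delta})$ leads to a scalar equation whose unique solution is $c_+ = 4\pi\Psi_1(x_p^*) + 3\log 2 + 2$, proving \eqref{eq-1-17}; the analogous computation at $x_p^-$ proves \eqref{eq-1-18}.

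The main obstacle is constructing the second-order profile $v_p^\pm = U + w_\pm/p + O(p^{-2+\delta})$ with error bounds strong enough (in both $L^\infty$ and weighted integrability) to survive integration against the logarithmic kernel of $G$ on the full bubble domain. Because the forcing $-\tfrac12 e^U U^2$ has nonzero projection onto the scaling element $\psi_0$ of the kernel of $-\Delta - e^U$, the equation for $w_\pm$ is not solvable as a bounded function on $\R^2$: its solution has unavoidable logarithmic growth at infinity. This growth is exactly the mechanism that forces the $-\log p/p$ term into $M_\pm$ and, through the coupling with the $\log \mu_p^\pm = O(p)$ prefactor in Green's representation, produces the sharp universal constant $3\log 2 + 2$. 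Making this self-consistent balance rigorous, with all constants (including $4\pi\Psi_i$) ultimately correct, is the delicate heart of the argument; a naive one-shot perturbation would miss the log-coupled contribution and give an incorrect constant.
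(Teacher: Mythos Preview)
Your overall strategy matches the paper's: establish the second-order expansion $v_p^\pm = U + w_0/p + O(p^{-2})$ on the bubble region, feed it into the Green representation of $u_p(x_p^\pm)$ split over balls around each extremum, extract the $\log\mu_p^\pm$ contribution together with the explicit integrals $\int e^U = 8\pi$, $\int e^U\log|z| = 12\pi\log 2$, $\int \Delta w_0 = 24\pi$, and solve the resulting self-consistency relation for $M_\pm$. The paper carries this out essentially as you describe, and the constants you anticipate are the correct ones.

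However, you have misidentified the genuine obstacle. The sublinear growth of $w_0$ and the Fredholm obstruction for $-\Delta - e^U$ are already handled in the positive-solution analysis of \cite{4}; they are not what is new here. What is specific to \emph{nodal} solutions, and what the paper isolates as its key new observation, is the a~priori fact that $u_p$ does \emph{not change sign} in a fixed ball around each extremum: there exists $\varrho_0>0$ independent of $p$ with $u_p>0$ on $B_{2\varrho_0}(x_p^+)$ and $u_p<0$ on $B_{2\varrho_0}(x_p^-)$ for all large $p$. Without this, the crucial pointwise decay $\bigl|1+v_p^\pm/p\bigr|^p \le C_\delta(1+|z|)^{-(4-\delta)}$ on the full bubble $|z|\le \varrho_0/\mu_p^\pm$ cannot be derived (the paper explicitly remarks that the relevant upper bound on the logarithmic integral fails if $1+v_p^\pm/p$ is allowed to change sign), and then none of the dominated-convergence arguments underlying your profile expansion or your integral computations are justified on the full domain. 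Your proposal implicitly assumes this decay when you claim error bounds ``strong enough to survive integration against the logarithmic kernel,'' but gives no mechanism to produce it for a sign-changing $u_p$. The paper proves the sign preservation by combining the outer convergence $pu_p\to 8\pi\sqrt e\,(G(\cdot,x^+)-G(\cdot,x^-))$ on an annulus (forcing positivity there) with a Sobolev-constant lower bound $p\int_{B_{\varrho_0}(x_p^+)}|u_p^-|^{p+1}\ge 8\pi e + o(1)$ should any negative part survive, contradicting the low-energy hypothesis; you should insert this step before invoking the second-order profile, since everything downstream rests on it.
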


\begin{remark}
In their interesting work \cite{4}, Grossi, Ianni, Luo and Yan obtained the sharp estimates of local maximums for the multi-spikes positive solutions  of \eqref{p_p}, and Theorem \ref{th2} generalizes their result to low energy nodal solutions. 

One key observation of proving Theroem \ref{th2} is to prove the existence of $\rho>0$ independent of $p$ such that
\begin{equation}\label{upun}
u_p(x)>0\quad\text{for $|x-x_p^+|\leq \rho$},\quad u_p(x)<0\quad\text{for $|x-x_p^-|\leq \rho$},
\end{equation}
see Lemma \ref{guaguawa}. This fact shows that near the maximum point or the minimun point, low energy nodal solutions do not change sign and hence   locally behave as positive solutions, from which we can follow the ideas from \cite{4} to prove the sharp estimates; see Section 3.

Again it follows easily from \cite{1,14} that \eqref{upun} can not hold for some high energy nodal solutions of \eqref{p_p}.
\end{remark}

We will see that Theorem \ref{cor-1.6} is an easy consequence of Theorem \ref{th2}. As another application of Theorem \ref{th2}, we can obtain the sharp estimate of the energy $J_p(u_p)$.

\begin{corollary}\label{prop sharp-1}
Let $(u_p)_{p>1}$ be a family of low energy nodal solutions of \eqref{p_p} with the concentrate points $\{x^+, x^-\}$ as stated in Theorem \ref{thm-LE}. Then 
\begin{align}\label{eq-4-80}
    J_p(u_p)=\frac{8\pi e}{p}+\frac{8\pi e}{p^2}\left(-2\log{p}+6\log{2}-3\right)+\frac{32\pi^2e}{p^2}\Psi(x^*)+o\left(\frac{1}{p^2}\right),
\end{align}
where $\Psi$ is defined in \eqref{eq-1-13^2} and $x^*=(x^+,x^-)$.
\end{corollary}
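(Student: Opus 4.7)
The plan is to reduce the energy estimate to a sharp expansion of the $L^{p+1}$ norm and then invoke Theorem \ref{th2} together with the local bubble analysis of Section 3. Testing \eqref{p_p} against $u_p$ gives $\int_\om|\nabla u_p|^2\,dx=\int_\om|u_p|^{p+1}\,dx$, whence
\[
J_p(u_p)=\frac{p-1}{2(p+1)}\int_\om|u_p|^{p+1}\,dx=\Bigl(\frac12-\frac1p+\frac1{p^2}+O(p^{-3})\Bigr)\int_\om|u_p|^{p+1}\,dx.
\]
Matching coefficients, \eqref{eq-4-80} is equivalent to the sharp asymptotic
\[
p\int_\om|u_p|^{p+1}\,dx=16\pi e+\frac{-32\pi e\log p+96\pi e\log 2-16\pi e+64\pi^2 e\,\Psi(x^*)}{p}+o\!\left(\frac1p\right),
\]
so the corollary reduces to establishing this sharp asymptotic.

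To obtain it I would split $\om$ into $B_\rho(x_p^+)$, $B_\rho(x_p^-)$ and the complement, with $\rho>0$ as in Lemma \ref{guaguawa}, so that $u_p$ has a definite sign in each ball. On the complement, Theorem \ref{thm-LE}\,(1) yields $|u_p|\le C/p$, making $\int|u_p|^{p+1}$ smaller than any power of $1/p$. Inside $B_\rho(x_p^+)$, where $u_p>0$, I apply the classical positive-bubble rescaling: with $\varepsilon_p^+:=(p\,u_p(x_p^+)^{p-1})^{-1/2}$ and $v_p^+(y):=p\bigl[u_p(x_p^++\varepsilon_p^+ y)/u_p(x_p^+)-1\bigr]$, the change of variables yields
\[
p\int_{B_\rho(x_p^+)}u_p^{p+1}\,dx=u_p(x_p^+)^2\int_{B_{\rho/\varepsilon_p^+}(0)}\Bigl(1+\frac{v_p^+(y)}{p}\Bigr)^{p+1}dy,
\]
and the symmetric formula holds at $x_p^-$. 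Theorem \ref{th2} directly gives $u_p(x_p^+)^2=e-\frac{2e\log p}{p}+\frac{2e(4\pi\Psi_1(x^*)+3\log 2+2)}{p}+O(p^{\delta-2})$, and similarly for $u_p(x_p^-)^2$ with $\Psi_1$ replaced by $\Psi_2$, so only a $1/p$-expansion of the rescaled integrals remains.

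For that expansion I use $(1+t/p)^{p+1}=e^t\bigl(1+(t-t^2/2)/p+O(1/p^2)\bigr)$ uniformly for bounded $t$, the moments $\int_{\R^2}e^U\,dy=8\pi$, $\int_{\R^2}e^UU\,dy=-16\pi$, $\int_{\R^2}e^UU^2\,dy=64\pi$ of the Liouville profile $U(y)=-2\log(1+|y|^2/8)$, and the sharp $1/p$-expansion of $v_p^\pm-U$ developed in Section 3 (following the strategy of \cite{4}); these ingredients combine to give
\[
\int_{B_{\rho/\varepsilon_p^\pm}(0)}\Bigl(1+\frac{v_p^\pm(y)}{p}\Bigr)^{p+1}dy=8\pi-\frac{40\pi}{p}+o\!\left(\frac1p\right).
\]
Substituting this, adding the contributions at $x_p^+$ and $x_p^-$, and using $\Psi_1(x^*)+\Psi_2(x^*)=\Psi(x^*)$ produces the required asymptotic for $p\int_\om|u_p|^{p+1}\,dx$ and hence \eqref{eq-4-80}. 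The main obstacle is the precise value $-40\pi$ of the correction: the elementary Taylor moments alone account only for $-48\pi/p$, and the extra $+8\pi/p$ comes from the next-order profile of $v_p^\pm$ beyond $U$, which requires the sharp $1/p$-control of $v_p^\pm-U$ on the whole concentration region, precisely what is developed in Section 3 as a byproduct of proving Theorem \ref{th2}.
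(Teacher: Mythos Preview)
Your proposal is correct and follows essentially the same route as the paper: reduce $J_p(u_p)$ to $\bigl(\tfrac12-\tfrac1{p+1}\bigr)\int_\om|u_p|^{p+1}$, localize to the two balls via Lemma~\ref{guaguawa} and \eqref{pup5}, rescale, insert the expansion of $|u_p(x_{i,p})|^2$ from Theorem~\ref{th2}, and use the identity $\int|1+v_{i,p}/p|^{p+1}dz=8\pi-40\pi/p+O(p^{\delta-2})$ (Proposition~\ref{prop57}, \eqref{sharp535^*}). Your remark that the naive Taylor moments yield $-48\pi/p$ and that the missing $+8\pi/p$ comes from the first correction $w_0$ in $v_{i,p}=U+w_0/p+\cdots$ is exactly how the paper obtains \eqref{sharp535^*}.
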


\begin{remark} In their interesting work \cite{19}, Esposito, Musso and Pistoia constructed the existence of some low energy nodal solutions satisfying the estimate \eqref{eq-4-80}, via the well-known method of finite-dimensional reductions.
Here Corollary \ref{prop sharp-1} shows that any low energy nodal solutions must satisfy the estimate \eqref{eq-4-80}.
\end{remark}

As a consequence of Corollary \ref{prop sharp-1}, we have

\begin{corollary}\label{th-7-2}
Let $(u_p)_{p>1}$ be a family of least energy nodal solutions to $\eqref{p_p}$ with the concentrate points $\{x^+, x^-\}$ as stated in Theorem \ref{thm-LE}. Then $(x^+, x^-)$ is a minimum point of $\Psi$, i.e.
\begin{align}
    \Psi (x^+,x^-)=\min\limits_{\mathcal{M}} \Psi(x,y).
\end{align}
\end{corollary}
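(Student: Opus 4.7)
The plan is to combine the sharp energy expansion of Corollary~\ref{prop sharp-1} with the existence result of Esposito--Musso--Pistoia~\cite{19}, through a standard comparison of energies.

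First I would verify that $\Psi$ attains its infimum on $\mathcal{M}$. As $(a_1,a_2)$ approaches the diagonal, the logarithmic singularity in~\eqref{eq-1-6^*} forces $G(a_1,a_2)\to +\infty$, while as $a_i\to\partial\Omega$ the Dirichlet condition gives $R(a_i)\sim \frac{1}{2\pi}\log\operatorname{dist}(a_i,\partial\Omega)\to -\infty$. In either case $\Psi\to+\infty$, so by continuity $\Psi$ attains its minimum at some interior point $(y^+,y^-)\in\mathcal{M}$.

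Next, I would invoke the Lyapunov--Schmidt reduction of~\cite{19} at the $C^0$-stable critical point $(y^+,y^-)$ to produce, for every sufficiently large $p$, a nodal solution $v_p$ of~\eqref{p_p} whose positive and negative peaks concentrate at $y^+$ and $y^-$. By construction $(v_p)$ is a family of low energy nodal solutions (the energy is of order $8\pi e/p$, which forces $p\int_\Omega |v_p|^{p+1}\,dx\to 16\pi e$ via the identity $J_p(v_p)=(1/2-1/(p+1))\int_\Omega|v_p|^{p+1}\,dx$), so Corollary~\ref{prop sharp-1} applies to $(v_p)$ and yields the expansion~\eqref{eq-4-80} with $x^*$ replaced by $(y^+,y^-)$.

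Finally, since $u_p$ is a least energy nodal solution and $v_p$ is nodal, $J_p(u_p)\le J_p(v_p)$. Applying Corollary~\ref{prop sharp-1} to both sides, the common terms cancel, and multiplying by $p^2/(32\pi^2 e)$ I obtain $\Psi(x^+,x^-)+o(1)\le \Psi(y^+,y^-)+o(1)$; letting $p\to\infty$ gives
\[
\Psi(x^+,x^-)\;\le\; \Psi(y^+,y^-)\;=\;\min_{\mathcal{M}}\Psi,
\]
whence equality. The main obstacle is invoking~\cite{19} cleanly at a minimum of $\Psi$: since strict local minima are $C^0$-stable critical points, the reduction in~\cite{19} covers this case as stated; should any finer hypothesis be required, one can replace $(y^+,y^-)$ by a nearby non-degenerate approximate minimizer on a slightly smaller admissible set, apply~\cite{19} there, and pass to the limit in the chain of inequalities above, recovering $\min_{\mathcal{M}}\Psi$ on the right-hand side.
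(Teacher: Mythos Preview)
Your proof is correct and follows essentially the same approach as the paper: both invoke the constructions of \cite{19} at a minimizer of $\Psi$, apply the sharp energy expansion \eqref{eq-4-80} to both the least energy nodal solution $u_p$ and the constructed solution $v_p$, and compare. The paper phrases this as a contradiction (assuming $\Psi(x^+,x^-)>\min_{\mathcal M}\Psi$ and deriving $J_p(u_p)>J_p(v_p)$), whereas you argue directly and additionally supply the verification that $\min_{\mathcal M}\Psi$ is attained and a remark on the $C^0$-stability needed for \cite{19}; these are useful refinements but the core argument is the same.
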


\begin{remark}
Esposito, Musso and Pistoia \cite{19} conjectured that the converse assertment of Corollary \ref{th-7-2} should be true, namely if a family of low energy nodal solutions $v_p$ of \eqref{p_p} concentrate at minimum points of $\Psi$, then $v_p$ is a least energy nodal solution for $p$ large. This conjecture seems challenging and remains open.
\end{remark}

Our final result is to prove the non-degeneracy of low  energy nodal solutions.

\begin{theorem}\label{thm-1.6}
Let $(u_p)_{p>1}$ be a family of low energy nodal solutions of \eqref{p_p} with the concentrate points $\{x^+,x^-\}$ as stated as in Theorem \ref{thm-LE}.
Suppose that $(x^+,x^-)$ is a non-degenerate critical point of $\Psi$ defined in $\eqref{eq-1-13^2}$. Then there exists $p_0>1$ such that $u_p$ is non-degenerate for any $p>p_0$, i.e. the linearized equation of \eqref{p_p}
\begin{align}\label{lineaaried}
     \begin{cases}
    -\Delta \xi_p =|u_p|^{p-1}\xi_p\quad&\text{in}\quad \Omega,\\
    \xi_p=0 \quad&\text{on}\quad\partial\Omega,
    \end{cases}
\end{align}
has only trivial solutions $\xi_p\equiv 0$.
\end{theorem}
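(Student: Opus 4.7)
The plan is a contradiction argument based on a blow-up analysis of the linearization at both concentration points. Suppose that, along some subsequence $p\to\infty$, there exists a nontrivial solution $\xi_p$ of \eqref{lineaaried}; normalize it by $\|\xi_p\|_{\infty}=1$. With the scaling $\varepsilon_p^{\pm}:=\bigl(p\,|u_p(x_p^{\pm})|^{p-1}\bigr)^{-1/2}$ forced by the equation, define the rescaled bubble and linearization by
\begin{align*}
\tilde u_p^{\pm}(y)&:=\frac{p}{u_p(x_p^{\pm})}\bigl(u_p(x_p^{\pm}+\varepsilon_p^{\pm}y)-u_p(x_p^{\pm})\bigr),\\
\tilde\xi_p^{\pm}(y)&:=\xi_p(x_p^{\pm}+\varepsilon_p^{\pm}y).
\end{align*}
By Theorem \ref{th2} and the sign information \eqref{upun} from Lemma \ref{guaguawa}, $u_p$ keeps a definite sign in a fixed neighborhood of $x_p^{\pm}$ and $\tilde u_p^{\pm}\to U(y)=\log\frac{8}{(1+|y|^2)^2}$ in $C^2_{\loc}(\R^2)$. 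Standard elliptic estimates then yield $\tilde\xi_p^{\pm}\to\phi^{\pm}$ in $C^1_{\loc}(\R^2)$, with $\phi^{\pm}$ a bounded solution of the linearized Liouville equation $-\Delta\phi=e^U\phi$ on $\R^2$.

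By the classical classification of bounded solutions of this equation,
\[
\phi^{\pm}(y)=a_0^{\pm}\,\frac{1-|y|^2}{1+|y|^2}+\sum_{j=1}^{2}a_j^{\pm}\,\frac{y_j}{1+|y|^2}.
\]
The dilation coefficient $a_0^{\pm}$ is ruled out by a mass/Pohozaev identity that exploits the pinning of the limit mass $p\int_{\Omega}|u_p|^{p+1}dx\to 16\pi e$ together with the sharp energy expansion of Corollary \ref{prop sharp-1}. Outside shrinking neighborhoods of $x_p^{\pm}$, the Green's representation formula combined with the inner expansion from Theorem \ref{th2} and the first part of Theorem \ref{thm-LE} gives, up to a subsequence,
\[
p\,\xi_p(x)\;\longrightarrow\;\sum_{\sigma\in\{+,-\}}\sum_{j=1}^{2}c_j^{\sigma}\,\partial_{x_j}G(x,x^{\sigma})\qquad\text{in } C^2_{\loc}\bigl(\overline{\Omega}\setminus\{x^+,x^-\}\bigr),
\]
with constants $c_j^{\sigma}$ proportional to the translation coefficients $a_j^{\sigma}$.

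To eliminate the remaining four coefficients, I would test \eqref{lineaaried} against $\partial_{x_k}u_p$ for $k=1,2$ and integrate by parts on a small ball $B_{\rho}(x_p^{\sigma})$, producing a local Pohozaev identity. Passing to the limit, the inner term disappears by symmetry of the Liouville bubble, while the boundary integral is evaluated through the outer Green's function expansion above and the sharp asymptotics of Theorem \ref{th2}. After matching the $p^{-2}$-order contributions, the four identities organize themselves into the linear system
\[
D^2\Psi(x^+,x^-)\,\bigl(c_1^+,c_2^+,c_1^-,c_2^-\bigr)^{\mathrm{T}}=0.
\]
The non-degeneracy hypothesis on $(x^+,x^-)$ then forces every $c_j^{\sigma}=0$, hence every $a_j^{\sigma}=0$, so $p\,\xi_p\to 0$ away from $\{x^+,x^-\}$ and $\tilde\xi_p^{\pm}\to 0$ locally near $x_p^{\pm}$. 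A standard comparison of inner and outer expansions on an intermediate annulus yields $\|\xi_p\|_{\infty}\to 0$, contradicting the normalization. The principal difficulty is the Pohozaev step: the boundary integrals must be computed to a precision at which the limiting quadratic form is recognizable as $D^2\Psi(x^+,x^-)$ and not a corrupted version of it, and it is precisely for this that the sharp expansions of Theorem \ref{th2} and Corollary \ref{prop sharp-1} are needed.
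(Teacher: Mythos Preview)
Your overall strategy matches the paper's: contradiction, rescaling at both concentration points, classification of the bounded limit via Lemma~\ref{Linear repe}, Pohozaev identities to produce the linear system with matrix $D^2\Psi(x^+,x^-)$, and a final contradiction on an intermediate scale. However, two of your steps are not as you describe them and hide genuine difficulties.

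First, the elimination of the dilation coefficients $a_0^{\pm}$ does \emph{not} follow from the energy expansion of Corollary~\ref{prop sharp-1}; that expansion concerns $u_p$ alone and carries no information about $\xi_p$. The paper instead uses a separate bilinear Pohozaev-type quantity $P_i(u_p,\xi_p)$ (Lemma~\ref{lemma-5.4}), and the crucial input is the cancellation
\[
\int_{B_\varrho(x_{i,p})}|u_p|^{p}\xi_p\,dx=O\!\left(\frac{\varepsilon_p}{p^2}\right),
\]
which is obtained from the identity $(p-1)\int|u_p|^{p-1}u_p\xi_p=\int(\xi_p\Delta u_p-u_p\Delta\xi_p)$ and the outer expansions of both $u_p$ and $\xi_p$ (see \eqref{eq-5-20}). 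Without this, the coefficient $A_{i,p}=p\int_{B_\varrho(x_{i,p})}|u_p|^{p-1}\xi_p$ is only $O(1)$, which is far too crude; with it one gets $pA_{i,p}=8\pi a_i+o(1)$ and $A_{i,p}=o(\varepsilon_p)$ simultaneously, forcing $a_i=0$. Your proposed route via the total mass and Corollary~\ref{prop sharp-1} does not produce this.

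Second, the final step is far from a ``standard comparison of inner and outer expansions''. After $a_i=b_{ij}=0$, one controls $\xi_p$ only in the outer region $\overline\Omega\setminus\cup_i B_{2\varrho}(x_{i,p})$ and in each inner core $B_{R\varepsilon_{i,p}}(x_{i,p})$; the maximum point $x_p$ of $|\xi_p|$ may lie in the intermediate annulus. The paper first narrows this to $B_{2p\varepsilon_{i,p}}(x_{i,p})\setminus B_{R\varepsilon_{i,p}}(x_{i,p})$ via a careful Green's-function estimate (Lemma~\ref{Lemma5-8}), and then excludes the remaining annulus by an ODE analysis of the angular average $\widetilde{\xi}_{i,p}(r)$: expressing it through the two explicit radial kernel elements $\widetilde\psi_1,\widetilde\psi_2$ of $-\psi''-\tfrac1r\psi'-e^U\psi=0$ plus a particular solution driven by an $O(1/p)$ source, one obtains $\widetilde{\xi}_{i,p}(r_p/\varepsilon_{i,p})=o(1)$, which contradicts the fact that a second rescaling at the scale $r_p=|x_p-x_{i,p}|$ converges to the constant $1$. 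This argument is delicate and is not a generic inner/outer matching.
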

\begin{remark}\
\begin{itemize}
    \item [(1)] Bartsch, Micheletti and Pistoia \cite{23} proved that $\Psi$ is a Morse function for \emph{most domains} $\Omega$ of class $C^{m+2,\alpha}$, for any $m\geq0$ and $0<\alpha<1$, which means that all critical points of $\Psi$ are non-degenerate.
    
\item [(2)] If $(x^+,x^-)$ is a degenerate critical point of $\Psi$, then $u_p$ may be degenerate in general; see Section 4 for a concrete example. Therefore, the condition of non-dengeracy of $(x^+,x^-)$ is necessary for Theorem \ref{thm-1.6}.
\end{itemize}
\end{remark}

The rest of this paper is organized as follows. In Section 2, we recall some basic facts proved in \cite{CCZ}, which are needed in later sections. In Section 3, we establish the sharp estimates of $\|u_p^{\pm}\|_{\infty}$ and prove Theorems \ref{cor-1.6}-\ref{th2} and Corollaries \ref{prop sharp-1}-\ref{th-7-2}. Finally in Section 4, we prove Theorem \ref{thm-1.6}.

\textbf{Natations}.
Throughout the paper, $C$ always denotes constants that are independent of $p$ (possibly different in different places). Conventionally, we use $o(1)$ to denote quantities that converge to $0$ (resp. use $O(1)$ to denote quantities that remain bounded) as $p\to\infty$. Denote $B_r(x):=\{y\in\R^2 : |y-x|<r\}$. For any $s>1$, we denote $\|u\|_s:=(\int_{\Omega}|u|^sdx)^{1/s}$.

\section{Preliminaries}
\label{section-2}

In the sequel, we always let $(u_p)_{p>1}$ be a family of low energy nodal solutions of \eqref{p_p}, without assuming the condition \eqref{eq-B-1}.
In this section, we briefly review some basic facts proved in \cite{CCZ}. The following statements are often in the sense of up to a subsequence. 
\begin{itemize}
\item[(P1)] By H\"{o}lder inequality,
\be\label{est-v-1}
	p\int_\Omega |u_p|^p
	\le \left(p\int_\Omega |u_p|^{p+1}\right)^{\frac{p}{p+1}}(p|\Omega|)^{\frac{1}{p+1}}
	\le C.
\ee
Furthermore,
there are $x^+, x^-\in \Omega$ with $x^+\neq x^-$ such that
\begin{equation}\label{pup1}pu_p(x)\to 8\pi\sqrt{e}(G(x,x^+)-G(x,x^-))\quad\text{in}\quad C^2_{loc}(\overline{\Omega}\setminus\{x^+,x^-\}).\end{equation}
In particular, for any compact subset $K\subset \overline{\Omega}\setminus\{x^+,x^-\}$, there is $C>0$ independent of $p$ such that for $p$ large,
\begin{equation}\label{pup5}p|u_p(x)|+p|\nabla u_p(x)|\leq C,\quad \forall x\in K.\end{equation}
\item[(P2)] Recalling that $x_p^+$ (resp. $x_p^-$) is a maximum (resp. minimum) point of $u_p$ in $\om$, i.e.
\begin{align}\label{c1.12-2}
    u_p(x_p^+)=\max_{x\in \Omega}u_p(x)=\Vert u_p^+ \Vert_\infty,\quad
u_p(x_p^-)=\min_{x\in \Omega}u_p(x)=-\Vert u_p^- \Vert_\infty,
\end{align}
we have 
\be\label{pup8}\lim_{p\to\infty}x_p^{\pm}=x^{\pm},\quad \lim_{p\to\infty}u_p(x_p^{\pm})=\pm\sqrt{e}.\end{equation}
For convenience we also write 
\[x_{1,p}=x_p^+,\quad x_{2,p}=x_{p}^{-},\quad x_{1}=x^+,\quad x_{2}=x^{-}.\]

\item[(P3)] Define
\begin{align}\label{pup12}
\e_{i,p}>0,\quad   \e_{i,p}^{-2}:= p|u_p(x_{i,p})|^{p-1}\rightarrow +\infty,
\end{align}
\begin{align}\label{use 22}
    v_{i,p}(x):=\frac{p}{u_p(x_{i,p})}\left(u_p(x_{i,p}+\e_{i,p} x)-u_p(x_{i,p})\right),\quad\quad  i=1,2.
\end{align}
Then
\be\label{pup7}
\begin{cases}
  -\dl v_{i,p}(x)=\left | 1+\frac{v_{i,p}(x)}{p}\right | ^{p-1}\left(1+\frac{v_{i,p}(x)}{p}\right),\quad x\in \Omega_{i,p}:=\frac{\Omega-x_{i,p}}{\varepsilon_{i,p}},\\
  v_{i,p}(x)\leq v_{i,p}(0)=0,\\
  v_{i,p}(x)=-p,\quad x\in \partial \Omega_{i,p},
\end{cases}
\ee
and $v_{i,p}(x)\to U(x)$ 
in $C^2_{loc}(\mathbb{R}^2)$ as $p \rightarrow\infty$, where 
\begin{align}\label{define use}
    U(x)=\log{\frac{1}{(1+\frac{1}{8}|x|^2)^2}}
\end{align}
is the solution of $-\Delta U=e^U$ in $\mathbb{R}^2$, $U\leq 0$, $U(0)=0$ and $\int_{\mathbb{R}^2}e^U=8\pi$. Furthermore,
\begin{align}\label{pup16}
\int_{\Omega_{i,p}}\left|1+\frac{v_{i,p}(y)}{p}\right|^{p} dy
=\frac{1} {|u(x_{i,p})|}p \int_{\Omega} |u_p(x)|^p dx\leq C.
\end{align}

\item[(P4)] Take $r_0>0$ small such that $B_{2r_0}(x_1)\cap B_{2r_0}(x_2)=\emptyset$. Then for any $\varrho\in(0,r_0)$, we have 
\begin{align*}
     \lim\limits_{p\rightarrow \infty} p\int_{B_{\varrho}(x_1)}|u_p^{+}|^{p+1}= \lim\limits_{p\rightarrow \infty}p\int_{B_{\varrho}(x_2)}|u_p^{-}|^{p+1}=8\pi e,
\end{align*}
\begin{align}\label{pup4}
   \lim\limits_{p\rightarrow \infty} p\int_{B_{\varrho}(x_1)}|u_p^{-}|^{p+1}= \lim\limits_{p\rightarrow \infty}p\int_{B_{\varrho}(x_2)}|u_p^{+}|^{p+1}=0.
\end{align}


\end{itemize}

\begin{corollary}\label{Lemma-3-8} Consider the nodal line
\[NL_p:=\{x\in \Omega : u_p(x)=0\}.\]
Then $\overline{NL_p}\cap \partial\Omega\neq \emptyset$ for $p$ large.
\end{corollary}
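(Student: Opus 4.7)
The plan is to argue by contradiction, forcing the limit $\Phi(x):=8\pi\sqrt{e}(G(x,x^{+})-G(x,x^{-}))$ to have vanishing Cauchy data on $\partial\Omega$ and then appealing to unique continuation. By (P1) the convergence $pu_{p}\to\Phi$ holds in $C^{2}_{loc}(\overline{\Omega}\setminus\{x^{+},x^{-}\})$, so in particular $p\partial_{\nu}u_{p}\to\partial_{\nu}\Phi$ uniformly on $\partial\Omega$ (which is compact and disjoint from $x^{\pm}$).

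First I would suppose, for a subsequence $p_{n}\to\infty$, that $\overline{NL_{p_{n}}}\cap\partial\Omega=\emptyset$. Then $\delta_{n}:=d(NL_{p_{n}},\partial\Omega)>0$, and $u_{p_{n}}$ has constant sign on each connected component of $\Omega_{\delta_{n}}:=\{x\in\Omega:d(x,\partial\Omega)<\delta_{n}\}$; passing to a further subsequence, this sign adjacent to each component of $\partial\Omega$ is $n$-independent. On a component $\Gamma\subset\partial\Omega$ along which $u_{p_{n}}>0$, for each $y\in\Gamma$ and $r<\delta_{n}$ the half-ball $B_{r}(y)\cap\Omega$ lies in the positive component of $\Omega_{\delta_{n}}$, so $u_{p_{n}}>0$ there with $u_{p_{n}}(y)=0$ and $-\Delta u_{p_{n}}=u_{p_{n}}^{p_{n}}\ge 0$. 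Hopf's boundary point lemma gives $\partial_{\nu}u_{p_{n}}(y)<0$ uniformly in $y\in\Gamma$, so in the limit $\partial_{\nu}\Phi\le 0$ on $\Gamma$; the analogous bound $\partial_{\nu}\Phi\ge 0$ holds on any boundary component adjacent to a negative region. On the other hand, applying the divergence theorem on $\Omega\setminus(B_{\varepsilon}(x^{+})\cup B_{\varepsilon}(x^{-}))$ and letting $\varepsilon\to 0$ yields $\int_{\partial\Omega}\partial_{\nu}\Phi\,d\sigma=0$, since the two Dirac masses in $\Delta\Phi$ cancel. In the simply connected case $\partial\Omega$ is a single component, so $\partial_{\nu}\Phi$ is of one sign with zero integral, forcing $\partial_{\nu}\Phi\equiv 0$. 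Combined with $\Phi|_{\partial\Omega}=0$, Holmgren's uniqueness theorem applied to the harmonic function $\Phi$ on $\Omega\setminus\{x^{+},x^{-}\}$ then yields $\Phi\equiv 0$, contradicting the logarithmic blow-up of $\Phi$ at $x^{+}$.

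The main obstacle is the multiply connected case, where $\partial_{\nu}\Phi$ may carry opposite constant signs on different boundary components $\Gamma_{i}$ with zero total flux, so a single integration does not immediately close the proof. To resolve this I would either refine the divergence identity to each $\Gamma_{i}$ separately, using the nodal decomposition of $\Omega$ by $\{\Phi=0\}$ to prove $\int_{\Gamma_{i}}\partial_{\nu}\Phi=0$ for every $i$, or localize the analysis near a sign-change point $y_{0}\in\Gamma_{i}$ of $\partial_{\nu}\Phi$: at such a point the Hessian of $\Phi$ is indefinite (forced by $\Phi|_{\partial\Omega}=0$ together with the harmonic equation), yielding a smooth interior nodal curve of $\Phi$ emanating from $y_{0}$; the $C^{2}$ convergence $pu_{p}\to\Phi$ then transfers this curve to $u_{p_{n}}$ via the implicit function theorem applied to the factorization $pu_{p}(t,s)=s\,f_{p}(t,s)$ in boundary-normal coordinates $(t,s)$, producing interior nodal points of $u_{p_{n}}$ accumulating at $\partial\Omega$.
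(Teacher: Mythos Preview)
Your simply connected argument is correct but runs the contradiction in the reverse direction from the paper, which makes it longer than necessary. Both proofs use the same three ingredients --- Hopf's lemma, the uniform boundary convergence $p\,\partial_\nu u_p\to\partial_\nu\Phi$ from (P1), and the zero-flux identity $\int_{\partial\Omega}\partial_\nu\Phi\,d\sigma=0$. You push Hopf's sign constraint \emph{forward} to the limit (obtaining $\partial_\nu\Phi$ of one sign with zero mean, hence $\partial_\nu\Phi\equiv 0$) and then invoke Holmgren to force $\Phi\equiv 0$. The paper instead argues \emph{backward}: since $\partial_\nu\Phi$ is nontrivial with zero mean it must strictly change sign on $\partial\Omega$, and then the uniform $C^1$ convergence forces $\partial_\nu u_p$ itself to change sign for $p$ large, contradicting Hopf directly --- no Holmgren, no subsequence extraction, no sign bookkeeping. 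The only place unique continuation is hiding in the paper's version is the tacit claim $\partial_\nu\Phi\not\equiv 0$, which is exactly the Cauchy-data fact you make explicit. As for the multiply connected obstacle you flag: the paper's proof, as written, simply asserts that Hopf yields $\partial_\nu u_p$ of one sign on \emph{all} of $\partial\Omega$, so it does not address the component-by-component subtlety any more carefully than you do; note however that if $\partial_\nu\Phi$ strictly changes sign on any single component $\Gamma_i$, the paper's backward step restricted to $\Gamma_i$ already gives the contradiction without any of the implicit-function-theorem machinery in your option (b).
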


\begin{proof}
Assume by contradiction that $\overline{NL_p}\cup \partial\Omega= \emptyset$, then the Hopf Lemma implies that $\frac{\partial u_p}{\partial \nu}$ does not change sign on $\partial\Omega$, where $\nu$ denotes the outer normal vector of $\partial\Omega$. Since $x_1, x_2\in\Omega$ and $x_1\neq x_2$, we have
\begin{align}
   \int_{\partial\Omega} \frac{\partial }{\partial \nu}\left(G(x,x_1)-G(x,x_2)\right)d\sigma= \int_{\Omega} \left(\Delta G(x,x_1)-\Delta G(x,x_2)\right)dx=0,
\end{align}
namely $\frac{\partial }{\partial \nu}\left(G(\cdot,x_1)-G(\cdot,x_2)\right)$ changes sign on $\partial\Omega$.
Since \eqref{pup1} implies
\begin{align}
    p\frac{\partial u_p}{\partial \nu}\rightarrow 8\pi \sqrt{e}\frac{\partial }{\partial \nu}\left(G(\cdot,x_1)-G(\cdot,x_2)\right),\quad\text{uniformly on }\partial\Omega \text{ as } p\rightarrow \infty,
\end{align}
so $\frac{\partial u_p}{\partial \nu}$ changes sign on $\partial\Omega$ for $p$ large enough, a contradiction.
\end{proof}

We recall the following results for later usage.

\begin{Lemma}\cite[Lemma 2.1]{7}\label{prop22}
Let $D\subset \mathbb{R}^2$ be a smooth bounded domain. Then for every $p>1$ there exists $S_p>0$ such that 
\begin{align}
    \left\Vert v \right\Vert_{L^{p+1}(D)}\leq S_p (p+1)^{\frac{1}{2}}\left\Vert \nabla v \right\Vert_{L^{2}(D)},\quad \forall v\in H_0^1 (D).
\end{align}
Moreover,
\begin{align}
    \lim\limits_{p\rightarrow \infty}S_p={(8\pi e)^{-\frac{1}{2}}}.
\end{align}
\end{Lemma}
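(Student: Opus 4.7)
The plan is to characterize $S_p$ via
\[
S_p^{-1} := (p+1)^{1/2}\,\inf\bigl\{\|\nabla v\|_2 : v\in H_0^1(D),\ \|v\|_{p+1}=1\bigr\},
\]
to show this infimum is attained by a positive least-energy solution of a Lane-Emden problem on $D$, and finally to plug the known asymptotics of such solutions into the formula.

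Since $D\subset\mathbb R^2$ is bounded and smooth, the embedding $H_0^1(D)\hookrightarrow L^{p+1}(D)$ is compact for every $p>1$, so the direct method applied to a minimizing sequence (truncated to its positive part) produces a nonnegative minimizer $w_p\in H_0^1(D)$ with $\|w_p\|_{p+1}=1$. Its Euler-Lagrange equation reads $-\Delta w_p=\mu_p w_p^p$ in $D$, $w_p=0$ on $\partial D$, for some Lagrange multiplier $\mu_p>0$; the strong maximum principle gives $w_p>0$ in $D$, and after replacing $w_p$ by $\mu_p^{1/(p-1)}w_p$ one may assume $w_p$ is a positive solution of $-\Delta w=w^p$. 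A routine mountain-pass comparison then forces $w_p$ to be a least-energy positive solution of this Lane-Emden equation on $D$.

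At this point I would invoke the classical Ren--Wei asymptotic analysis of least-energy positive solutions of the two-dimensional Lane-Emden equation, which provides $p\int_D w_p^{p+1}\,dx\to 8\pi e$ as $p\to\infty$. Testing the equation against $w_p$ yields $\|\nabla w_p\|_2^2=\int_D w_p^{p+1}\,dx$, so $\|\nabla w_p\|_2^2\sim 8\pi e/p$ and $\|w_p\|_{p+1}=(8\pi e/p)^{1/(p+1)}(1+o(1))\to 1$. Substituting,
\[
S_p \;=\; \frac{\|w_p\|_{p+1}}{(p+1)^{1/2}\,\|\nabla w_p\|_2}\;\sim\; \frac{1}{(p+1)^{1/2}\sqrt{8\pi e/p}}\;\longrightarrow\; \frac{1}{\sqrt{8\pi e}},
\]
as required. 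The genuinely delicate ingredient is the asymptotic $p\int w_p^{p+1}\to 8\pi e$: it requires the standard two-dimensional blow-up analysis (rescaling around the maximum of $w_p$ and passing to the entire Liouville bubble $U$ of \eqref{define use}), together with an energy matching via test functions modeled on $U$. Since this is a well-known result in the literature and the prototype for the nodal analysis carried out in the present paper, I would quote it rather than redo the analysis; the main obstacle is therefore one of citation rather than of new work.
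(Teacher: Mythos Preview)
The paper does not give a proof of this lemma at all: it is simply quoted as \cite[Lemma 2.1]{7} (Ren--Wei) and used as a black box in the proof of Lemma~\ref{guaguawa}. So there is no ``paper's own proof'' to compare against.

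Your sketch is mathematically sound --- the best-constant characterization, the existence of a positive minimizer via compactness, its identification with a least-energy Lane--Emden solution, and the final limit computation all check out. But note that the key asymptotic you invoke, $p\int_D w_p^{p+1}\to 8\pi e$, is precisely the main theorem of the same Ren--Wei paper \cite{7} from which the lemma is quoted; indeed, in that paper the two statements are proved together, each feeding into the other. So your argument is not so much an independent proof as a reduction of one Ren--Wei result to another, which is consistent with your own closing remark that the obstacle is ``one of citation rather than of new work.'' Since the present paper also treats the lemma as a citation, your approach is entirely in line with how the result is handled here.
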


\begin{Lemma}\cite[Lemma 2.3]{11}\label{Linear repe}
Recalling $U(z)$ defined in \eqref{define use},
let $u(z)$ be a solution of the linearized equation 
\begin{align}\label{linear U}
     \begin{cases}
    -\Delta u(z) =e^{U(z)} u(z)    \quad&\text{in}\quad \mathbb{R}^2,\\
    |u(z)|\leq C(1+|z|)^{\tau} \quad&\text{in}\quad \mathbb{R}^2,
    \end{cases}
\end{align}
for some $\tau\in[0,1)$. Then 
\begin{align}
    u(z)=\sum\limits_{i=1}^3 c_i \psi_i(z),
\end{align}
where $c_i\in\mathbb{R}$ are constants and $z=(z_1,z_2)$,
\begin{align}
    \psi_1(z)=\frac{z_1}{8+|z|^2}  ,\quad \psi_2(z)=\frac{z_2}{8+|z|^2} ,\quad\psi_3(z)=\frac{8-|z|^2}{8+|z|^2}.
\end{align}
\end{Lemma}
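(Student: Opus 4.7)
The plan is to exploit the rotational symmetry of $e^{U(z)}=\frac{64}{(8+|z|^2)^2}$ by separation of variables. Writing $z=(r\cos\theta,r\sin\theta)$ and Fourier-expanding
\[
u(z)=\sum_{k=0}^{\infty}\bigl(a_k(r)\cos k\theta+b_k(r)\sin k\theta\bigr),
\]
each coefficient $\varphi\in\{a_k,b_k\}$ solves the ODE
\[
\varphi''(r)+\tfrac{1}{r}\varphi'(r)-\tfrac{k^2}{r^2}\varphi(r)+\tfrac{64}{(8+r^2)^2}\varphi(r)=0,\qquad r>0,
\]
and inherits both smoothness at $r=0$ (since $u$ is a classical solution of a linear elliptic equation with smooth coefficients) and the growth bound $|\varphi(r)|\leq C(1+r)^{\tau}$ with $\tau<1$. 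The problem reduces to showing that each mode admits only the explicit kernel elements produced by the conformal symmetries of the Liouville equation.

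The three explicit kernel elements come from translations and scalings of the bubble $U$. Differentiating $U(\cdot-z_0)$ in $z_0$ gives $\partial_{z_j}U(z)=-\frac{4z_j}{8+|z|^2}$, which are multiples of $\psi_1,\psi_2$ and live in the $k=1$ mode. Differentiating the one-parameter family $U_\lambda(z):=U(\lambda z)+2\log\lambda$ at $\lambda=1$ gives $z\cdot\nabla U+2=\frac{2(8-|z|^2)}{8+|z|^2}$, a multiple of $\psi_3$ living in the $k=0$ mode. Hence $\phi_0(r):=\frac{8-r^2}{8+r^2}$ solves the $k=0$ ODE and $\phi_1(r):=\frac{r}{8+r^2}$ solves the $k=1$ ODE.

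It remains to eliminate all other possibilities mode by mode. The ODE is Fuchsian at $r=0$ with indicial roots $\pm k$ (a double root $0$ when $k=0$), so in every mode only a one-dimensional family of local solutions is smooth at the origin. For $k=0$, $\phi_0(0)=1$ is the smooth one; the independent companion carries a $\log r$ singularity, excluded by the smoothness of $u$, so $a_0=c_3\phi_0$. For $k=1$, $\phi_1$ vanishes like $r$ while the companion behaves like $r^{-1}$ at $0$, again excluded by smoothness, so $a_1=c_1\phi_1$ and $b_1=c_2\phi_1$. For $k\geq 2$ the key observation is the pointwise inequality $\frac{k^2}{r^2}>\frac{64}{(8+r^2)^2}$ for every $r>0$ (for $k=2$ it reduces to $r^2-4r+8>0$, whose discriminant is negative, and $k^2\geq 4$ in general). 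Asymptotic analysis at infinity (where $V(r)=O(r^{-4})$ is integrable against the centrifugal term) shows every solution behaves like $Ar^k+Br^{-k}$; if the smooth-at-origin solution $\varphi_k$ had $A=0$ it would lie in $L^2(rdr)$, and multiplying the ODE by $r\varphi_k$ and integrating by parts would give
\[
\int_0^{\infty}\bigl(r\varphi_k'^2+(k^2/r^2-V(r))\,r\varphi_k^2\bigr)dr=0
\]
with strictly positive integrand, forcing $\varphi_k\equiv 0$. Hence the smooth solution grows like $r^k\geq r^2$ at infinity, violating $\tau<1$, and $a_k=b_k=0$ for all $k\geq 2$.

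The main obstacle is the $k\geq 2$ case, where one must rigorously exclude the possibility that the unique smooth-at-origin solution falls on the fast-decaying $r^{-k}$ branch at infinity; the $L^2$-energy argument above is the cleanest way I see, and it crucially relies on the centrifugal barrier $k^2/r^2$ strictly dominating the potential $V(r)$ for $k\geq 2$. Assembling the three surviving modes yields $u=c_1\psi_1+c_2\psi_2+c_3\psi_3$, as claimed.
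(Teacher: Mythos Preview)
The paper does not prove this lemma; it is quoted verbatim from Chen--Lin \cite[Lemma~2.3]{11} and used as a black box. Your separation-of-variables argument is the standard route to this non-degeneracy statement and is essentially what one finds in the literature (including Chen--Lin's original). The argument is sound: the indicial analysis correctly isolates the one-dimensional smooth-at-origin subspace in each angular mode, the explicit kernel elements $\phi_0,\phi_1$ account for modes $k=0,1$, and for $k\ge 2$ your centrifugal-barrier inequality $k^2/r^2>64/(8+r^2)^2$ together with the $L^2$ energy identity cleanly forces the smooth branch onto the $r^k$ asymptotic, which is then killed by $\tau<1$. One small point worth making explicit is the justification of the $Ar^k+Br^{-k}$ asymptotics at infinity (a Levinson-type perturbation of the Euler equation by $V=O(r^{-4})$) and the vanishing of the boundary terms in the integration by parts, but these are routine under the stated decay/growth and do not affect the validity of your outline.
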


\section{Sharp estimates of the low energy  nodal solutions}
\label{section-4}

This section is devoted to the proof of Theorems \ref{cor-1.6}-\ref{th2} and Corollaries \ref{prop sharp-1}-\ref{th-7-2}. One key observation is as follows.

\begin{Lemma}\label{guaguawa}
  There exist $p_0>1$ and $\varrho_0 \in (0,r_0/2)$ such that for any $p>p_0$, up to a subsequence if necessary, $u_p(x)\geq 1/p$ in $B_{2\varrho_0}(x_1)$ and $u_p(x)\leq -1/p$ in $B_{2\varrho_0}(x_2)$.
\end{Lemma}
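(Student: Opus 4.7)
The plan is to choose $\varrho_0 \in (0, r_0/2)$ small enough that, on the annulus $\{\varrho_0 \le |x-x_1| \le 2\varrho_0\}$, the limit profile $8\pi\sqrt{e}(G(\cdot,x_1)-G(\cdot,x_2))$ stays uniformly bounded below by, say, $3$. This is possible because $G(\cdot,x_1)$ blows up logarithmically at $x_1$ while $G(\cdot,x_2)$ remains bounded near $x_1$; an analogous choice works at $x_2$. By the $C^2_{loc}$ convergence \eqref{pup1}, this forces $u_p(x)\ge 2/p$ uniformly on that shell for all $p$ large, and in particular on $\partial B_{2\varrho_0}(x_1)$. The proof then splits into two steps: first I would show $u_p\ge 0$ on the whole ball $B_{2\varrho_0}(x_1)$, and second I would promote this nonnegativity to $u_p\ge 1/p$ by a minimum principle.

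For the first step, I would argue by contradiction: assume $\{u_p<0\}\cap B_{2\varrho_0}(x_1)\ne\emptyset$ and let $D^-$ be a connected component. The shell estimate above forces $D^-$ to be compactly contained in $B_{\varrho_0}(x_1)$ with $\partial D^-\subset\{u_p=0\}$, so $D^-$ is a genuine nodal domain. Setting $v:=-u_p$, we have $v\in H_0^1(D^-)$, $v>0$ in $D^-$, and $-\Delta v=v^p$; testing the equation with $v$ gives $\|\nabla v\|_{L^2(D^-)}^2=\|v\|_{L^{p+1}(D^-)}^{p+1}$. Extending $v$ by zero to $\Omega$ and applying Lemma \ref{prop22} then produces $\|v\|_{L^{p+1}(D^-)}^{p-1}\ge (S_p^2(p+1))^{-1}$; raising to the power $(p+1)/(p-1)$ and invoking $S_p\to (8\pi e)^{-1/2}$ yields the nodal-domain lower bound
\[
    p\int_{D^-} v^{p+1}\,dx \ \ge\ 8\pi e+o(1).
\]
However, since $D^-\subset B_{r_0}(x_1)$, property \eqref{pup4} forces $p\int_{D^-} v^{p+1}\le p\int_{B_{r_0}(x_1)}|u_p^-|^{p+1}=o(1)$, a contradiction. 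Hence $u_p\ge 0$ in $B_{2\varrho_0}(x_1)$ for $p$ large.

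For the second step, with $u_p\ge 0$ in $B_{2\varrho_0}(x_1)$, the equation becomes $-\Delta(u_p-1/p)=u_p^p\ge 0$, so $u_p-1/p$ is superharmonic there; since $u_p\ge 2/p$ on $\partial B_{2\varrho_0}(x_1)$ from the opening paragraph, the minimum principle immediately delivers $u_p-1/p\ge 1/p$ throughout $B_{2\varrho_0}(x_1)$, i.e. $u_p\ge 2/p\ge 1/p$. The analogous conclusion near $x_2$ is obtained by running the same argument with $-u_p$ in place of $u_p$, which concentrates positively at $x_2$ and solves the same equation. The one nontrivial ingredient is the Sobolev/Faber--Krahn-type lower bound $p\int_{D^-} v^{p+1}\ge 8\pi e+o(1)$ on an arbitrary nodal domain, but this comes directly from Lemma \ref{prop22} and its sharp asymptotic constant; the rest is a direct comparison using \eqref{pup1}, \eqref{pup4}, and the minimum principle, so I do not anticipate any real obstacle beyond being careful that the hypothetical component $D^-$ is compactly contained in the ball so that extension by zero is legitimate, which is exactly what the annular choice of $\varrho_0$ ensures.
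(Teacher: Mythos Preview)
Your proposal is correct and follows essentially the same route as the paper: choose $\varrho_0$ so the limit profile is uniformly positive on the annulus, use the $C^2_{loc}$ convergence \eqref{pup1} to get $u_p>0$ there, apply Lemma~\ref{prop22} to force $p\int |u_p^-|^{p+1}\ge 8\pi e+o(1)$ on any putative negative region inside $B_{\varrho_0}(x_1)$, and contradict \eqref{pup4}. The only cosmetic differences are that the paper tests directly with $u_p^-\chi_{B_{2\varrho_0}(x_1)}$ rather than passing to a connected component $D^-$, and for the final upgrade it applies the strong maximum principle to $pu_p$ (using $pu_p\ge 1$ on the annulus) instead of your superharmonicity argument for $u_p-1/p$; both variants are equivalent.
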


\begin{proof}
Clearly there exists $\varrho_0\in(0,r_0/2)$ such that
\begin{align}
    8\pi \sqrt{e}(G(x,x_1)-G(x,x_2))\geq 2,\quad \forall x\in \overline{B_{2\varrho_0}(x_1)}.
\end{align}
Then by \eqref{pup1}, there exists $p_1>1$ such that for any $p>p_1$,
\begin{align}\label{eq-3-33}
    p u_p(x)\geq 8\pi \sqrt{e}(G(x,x_1)-G(x,x_2))-1\geq 1,\quad\forall x\in \overline{B_{2\varrho_0}(x_1)}\backslash B_{\varrho_0}(x_1),
\end{align}
so
\begin{align}\label{eq-3-33-3}
u_p(x)>0,\quad\forall x\in \overline{B_{2\varrho_0}(x_1)}\backslash B_{\varrho_0}(x_1).
\end{align}
Then $u_p^-\chi_{B_{2\varrho_0}(x_1)}\in H_0^1(B_{\varrho_0}(x_1))\subset H_0^1(\Omega)$, where 
$\chi_{B_{2\varrho_0}(x_1)}$ denotes the characteristic function of $B_{2\varrho_0}(x_1)$, i.e.
\[\chi_{B_{2\varrho_0}(x_1)}(x)=\begin{cases}
1&\text{if }x\in B_{2\varrho_0}(x_1)\\
0&\text{if }x\not\in B_{2\varrho_0}(x_1).
\end{cases}\]
Consequently,
\[\int_{\Omega}\nabla u_p \nabla (u_p^-\chi_{B_{2\varrho_0}(x_1)}) =\int_{\Omega} |u_p|^{p-1}u_p u_p^-\chi_{B_{2\varrho_0}(x_1)},\]
i.e.
\begin{align}\label{xin1}
    \int_{B_{\varrho_0}(x_1)}|\nabla u_p^-|^2=\int_{B_{\varrho_0}(x_1)}|u_p^-|^{p+1}.
\end{align}
Assume by contradiction that up to a subsequence of $p>p_1$, 
\[ \int_{B_{\varrho_0}(x_1)}|u_p^-|^{p+1}>0.\]
Then Lemma \ref{prop22} and \eqref{xin1} imply
\begin{align*}
    0<\left(\int_{B_{\varrho_0}(x_1)}|u_p^-|^{p+1}\right)^{\frac{2}{p+1}}\leq& S_p^2(p+1)\int_{B_{\varrho_0}(x_1)}|\nabla u_p^-|^2\notag\\
    =&S_p^2(p+1)\int_{B_{\varrho_0}(x_1)}|u_p^-|^{p+1},
\end{align*}
and so
\begin{align*}
    p\int_{B_{\varrho_0}(x_1)}|u_p^-|^{p+1}\geq \frac{p}{(S_p^2(p+1))^{\frac{p+1}{p-1}}}\geq 8\pi e+o(1),\quad\text{as }p\to\infty,
\end{align*}
which contradicts with \eqref{pup4}. 

Therefore, there is $p_0>p_1$ such that for any $p>p_0$, we have $\int_{B_{\varrho_0}(x_1)}|u_p^-|^{p+1}=0$, i.e. $u_p(x)\geq 0$ in $ B_{\varrho_0}(x_1)$. From here and \eqref{eq-3-33-3}, we see from the strong maximum principle that $u_p(x)> 0$ in $ B_{2\varrho_0}(x_1)$. In fact, \eqref{eq-3-33} and the strong maximum principle imply that $pu_p(x)\geq 1$ in $ B_{2\varrho_0}(x_1)$.

Similarly we can prove $pu_p(x)\leq -1$ in $B_{2\varrho_0}(x_2)$ for $p>p_0$ by taking $\varrho_0$ smaller and $p_0$ larger if necessary.
\end{proof}

Recalling $v_{i,p}$ and $\varepsilon_{i,p}$ defined in \eqref{use 22}, it follows from Lemma \ref{guaguawa} and \eqref{c1.12-2}-\eqref{use 22} that for $p$ large,
\begin{align}\label{hao chu}
    0< 1+\frac{v_{i,p}(z)}{p}=\frac{u_p(\varepsilon_{i,p}z+x_{i,p})}{u_p(x_{i,p})}\leq 1 \quad \text{ in } \,\,B_{\frac{2\varrho_0}{\varepsilon_{i,p}}}(0),\quad i=1,2.
\end{align}

\begin{Lemma}\label{prop44}
For any $\delta\in(0,1)$, there exist $p_{\delta}>p_0$, $R_{\delta}>1$ and $C_\delta>0$ such that 
\begin{align}\label{pup6}
   v_{i,p}(z)\leq \left(4 -\delta \right)\log{\frac{1}{|z|}}+ C_{\delta}, \quad\quad \text{ for } i=1,2,
\end{align}
provided $R_{\delta}\leq |z| \leq \frac{\varrho_0}{\varepsilon_{i,p}}$ and $p>p_{\delta}$. 
\end{Lemma}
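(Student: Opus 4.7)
The plan is to combine a Green's function representation on the ball $B_L := B_{\varrho_0/\varepsilon_{i,p}}(0)$ with mass concentration of $(1+v_{i,p}/p)^p$ near the origin, the latter being enabled by the sign consistency from Lemma~\ref{guaguawa}. The first step records the inputs: by Lemma~\ref{guaguawa} and \eqref{hao chu}, on $B_{2\varrho_0/\varepsilon_{i,p}}(0)$ we have $0 \leq 1 + v_{i,p}/p \leq 1$, so $-\Delta v_{i,p} = (1+v_{i,p}/p)^p$ pointwise with nonnegative uniformly bounded forcing. Using the elementary inequality $(1+v_{i,p}/p)^p \leq 2(1+v_{i,p}/p)^{p+1}$ on $\{1+v_{i,p}/p \geq 1/2\}$ (with the complement contributing $\leq 2^{-p}$) together with \eqref{pup4} and a direct rescaling giving $\int_{B_L}(1+v_{i,p}/p)^{p+1}dz \to 8\pi$, I would establish the tail estimate
\[
M_2 := \int_{B_L \setminus B_R}(1+v_{i,p}/p)^p\,dy \;\leq\; 2\left(8\pi - \int_{B_R} e^U\right) + o_p(1),
\]
while simultaneously $M_1 := \int_{B_R}(1+v_{i,p}/p)^p\,dy \to \int_{B_R} e^U$ by $C^2_{loc}$ convergence.

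Next I would control the boundary data. Since $x_{i,p}\to x_i$ and $\varrho_0 < r_0/2$, $\partial B_{\varrho_0}(x_{i,p})$ lies in a fixed compact subset of $\overline{\Omega}\setminus\{x_1, x_2\}$ for large $p$, so \eqref{pup5} gives $p|u_p| = O(1)$ there, whence $v_{i,p} = -p + O(1)$ uniformly on $\partial B_L$. Decomposing $v_{i,p} = w_p + \tilde h_p$ on $B_L$ with $\tilde h_p$ the harmonic extension of $v_{i,p}|_{\partial B_L}$, the maximum principle yields $\tilde h_p = -p + O(1)$ on $B_L$, while $w_p(z) = \int_{B_L} G_{B_L}(z, y)(1+v_{i,p}/p)^p\, dy$. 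Subtracting the representation at $z = 0$ (where $v_{i,p}(0) = 0$) cancels the dominant $\frac{1}{2\pi}\log L$ piece of the regular part of $G_{B_L}$, leaving
\[
v_{i,p}(z) \;=\; \frac{1}{2\pi}\int_{B_L}\log\frac{|y|}{|z-y|}\,(1+v_{i,p}/p)^p\,dy \;+\; O(1) \quad\text{for } |z| \leq L/2,
\]
where the $O(1)$ absorbs $\tilde h_p(z) - \tilde h_p(0)$ together with the small $H_{B_1}(z/L, y/L)$ correction (bounded since $(1+v_{i,p}/p)^p$ is exponentially small near $\partial B_L$).

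For $R_\delta \leq |z| \leq \varrho_0/(2\varepsilon_{i,p})$, I would split the integral into $B_{R_\delta}$ and its complement. On $B_{R_\delta}$ with $|z| \geq 2R_\delta$, the bound $\log(|y|/|z-y|) \leq \log R_\delta + \log 2 - \log|z|$ contributes $-M_1 \log|z|/(2\pi) + O(\log R_\delta)$. On the outer region, the pieces $\{|y| < |z|/2\}$ (where $\log(|y|/|z-y|) \leq 0$) and $\{|y| > 2|z|\}$ (where $\log(|y|/|z-y|) \leq \log 2$) contribute only $O(M_2)$, while the annulus $\{|z|/2 \leq |y| \leq 2|z|\}$ contributes at most $(\log 2|z|) M_2 + O(1)$, the $O(1)$ coming from local integrability of $-\log|z-y|$ against the uniform bound $(1+v_{i,p}/p)^p \leq 1$. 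The effective coefficient of $-\log|z|$ in the resulting upper bound for $v_{i,p}(z)$ is therefore at least $M_1/(2\pi) - CM_2 \geq 4 - \eta_{R_\delta}\bigl(\tfrac{1}{2\pi} + 2C\bigr) - o_p(1)$, where $\eta_{R_\delta} := 8\pi - \int_{B_{R_\delta}} e^U$. Choosing $R_\delta$ so large that $\eta_{R_\delta}(\tfrac{1}{2\pi} + 2C) < \delta/2$ and $p \geq p_\delta$ so large that $o_p(1) < \delta/2$ then gives \eqref{pup6}.

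The main difficulty is the annular contribution $(\log 2|z|) M_2$: since $|z|$ can be as large as $\varrho_0/\varepsilon_{i,p}$ with $\log|z| = O(p)$, while $M_2$ is only bounded away from zero by a constant independent of $p$, the naive product is huge. The resolution is that this term appears as a multiplicative correction to the \emph{coefficient} of $-\log|z|$, not as an additive constant, and the asymptotic $e^U(y)\sim |y|^{-4}$ yields $\int_{|y|>R}e^U \sim R^{-2}$, so $M_2$ can be made arbitrarily small by a single fixed choice of $R_\delta$ depending only on $\delta$. Thus the slack $\delta$ in $(4-\delta)$ is precisely what absorbs both the concentration loss $\eta_{R_\delta}/(2\pi)$ on $B_{R_\delta}$ and the tail correction $2C\eta_{R_\delta}$ on $B_L \setminus B_{R_\delta}$ in a unified way.
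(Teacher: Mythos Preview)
Your approach is correct and follows essentially the same route as the paper: both arrive at the representation
\[
v_{i,p}(z)=\frac{1}{2\pi}\int \log\frac{|y|}{|z-y|}\Bigl(1+\tfrac{v_{i,p}(y)}{p}\Bigr)^{p}\,dy+O(1)
\]
(you via the Green function of the ball $B_L$ together with boundary control from \eqref{pup5} and the maximum principle, the paper via the Green function of $\Omega$ and its regular part $H$), and then exploit that the nonnegative mass $(1+v_{i,p}/p)^p$ concentrates near the origin thanks to the sign information \eqref{hao chu} from Lemma~\ref{guaguawa}. The one substantive difference is that the paper outsources the decay of this logarithmic integral to \cite[Lemma~4.4]{8}, whereas you carry out the dyadic splitting (inner ball $B_{R_\delta}$, the regions $\{|y|<|z|/2\}$, $\{|y|>2|z|\}$, and the annulus $\{|z|/2\le|y|\le 2|z|\}$) by hand; your tail bound $M_2\le 2\eta_{R_\delta}+o_p(1)$ obtained by passing to the $(p{+}1)$-th power and invoking (P4) is a clean and self-contained substitute for the integral bounds quoted from \cite{8}. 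Two cosmetic points: your argument as written covers $|z|\le L/2=\varrho_0/(2\varepsilon_{i,p})$ rather than $\varrho_0/\varepsilon_{i,p}$, and the annulus bound tacitly needs $|z|\ge 2R_\delta$; both are fixed by taking $L=2\varrho_0/\varepsilon_{i,p}$ (allowed by Lemma~\ref{guaguawa}) and doubling $R_\delta$ in the final statement.
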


\begin{proof} Let $ |z| \leq \frac{\varrho_0}{\varepsilon_{i,p}}$. 
Inserting \eqref{hao chu} into the Green representation formula 
\[u_p(\varepsilon_{i,p}z+x_{i,p})=\int_{\Omega}G(\varepsilon_{i,p}z+x_{i,p},y)|u_p(y)|^{p-1}u_p(y)dy\]
and recalling \eqref{pup7} that $\Omega_{i,p}=\frac{\Omega-x_{i,p}}{\varepsilon_{i,p}}$, we easily obtain
{\allowdisplaybreaks
\begin{align}\label{ct1}
    v_{i,p}(z)=&v_{i,p}(z)-v_{i,p}(0)\notag\\
    =&\int_{\Omega_{i,p}}\left[G(x_{i,p}+\varepsilon_{i,p} z, x_{i,p}+\varepsilon_{i,p} y)-G(x_{i,p}, x_{i,p}+\varepsilon_{i,p} y)\right]\notag\\
    &\left|1+\frac{v_{i,p}(y)}{p}\right|^{p-1}\left(1+\frac{v_{i,p}(y)}{p}\right) dy\notag\\
    =&\frac{1}{2\pi}\int_{B_{\frac{2\varrho_0}{\varepsilon_{i,p}}}(0)} \log{\frac{|y|}{|z-y|}}\left|1+\frac{v_{i,p}(y)}{p}\right|^{p-1}\left(1+\frac{v_{i,p}(y)}{p}\right)  dy\\
    &+\frac{1}{2\pi}\int_{\Omega_{i,p}\setminus B_{\frac{2\varrho_0}{\varepsilon_{i,p}}}(0)} \log{\frac{|y|}{|z-y|}}\left|1+\frac{v_{i,p}(y)}{p}\right|^{p-1}\left(1+\frac{v_{i,p}(y)}{p}\right)  dy\notag\\
&+\int_{\Omega_{i,p}}\left[H(x_{i,p}+\varepsilon_{i,p} z, x_{i,p}+\varepsilon_{i,p} y)-\right.\notag\\
    &\left. H(x_{i,p}, x_{i,p}+\varepsilon_{i,p} y)\right]
   \left|1+\frac{v_{i,p}(y)}{p}\right|^{p-1}\left(1+\frac{v_{i,p}(y)}{p}\right) dy\nonumber\\
   =&I_1(z)+I_2(z)+I_3(z).\nonumber
\end{align}
}%
Since $H(\cdot, \cdot)$ is bounded in $B_{2\varrho_0}(x_i)\times \Omega$, it follows from \eqref{pup16} that
\begin{align}\label{ctttt1}
|I_3(z)|\leq C \int_{\Omega_{i,p}}\left|1+\frac{v_{i,p}(y)}{p}\right|^{p} dy\leq C.
\end{align}
Since $|z| \leq \frac{\varrho_0}{\varepsilon_{i,p}}$, we have
$\frac{2}{3}\leq \frac{|y|}{|z-y|}\leq 2$ for $|y|\geq \frac{2\varrho_0}{\varepsilon_{i,p}}$,
so
\begin{align}\label{eq-4-7^*}
    |I_2(z)|\leq C \int_{\Omega_{i,p}\setminus B_{\frac{2\varrho_0}{\varepsilon_{i,p}}}(0)}\left|1+\frac{v_{i,p}(y)}{p}\right|^{p} dy
\leq C.
\end{align}
On the other hand, by (P2) in Section 2, the same proof as \cite[Lemma 4.4]{8} (where postive solutions of \eqref{p_p} was studied) implies that for any $\delta\in (0,1)$, there exsits $R_\delta>0$, $p_\delta>p_0$ and $\widetilde{C}_{\delta}>0$ such that for any $R_{\delta}\leq |z|\leq \frac{\varrho_0}{\varepsilon_{i,p}}$ and $p>p_\delta$, there holds
\[\frac{1}{2\pi}\int_{B_{\frac{2\varrho_0}{\varepsilon_{i,p}}}(0)} \log{\frac{|y|}{|z-y|}}\left|1+\frac{v_{i,p}(y)}{p}\right|^{p}dy\leq \left(4 -\delta \right)\log{\frac{1}{|z|}}+ \widetilde{C}_{\delta}.\]
Now thanks to \eqref{hao chu}, we have
\begin{equation}\label{pup9}I_1(z)=\frac{1}{2\pi}\int_{B_{\frac{2\varrho_0}{\varepsilon_{i,p}}}(0)} \log{\frac{|y|}{|z-y|}}\left|1+\frac{v_{i,p}(y)}{p}\right|^{p}dy\leq \left(4 -\delta \right)\log{\frac{1}{|z|}}+ \widetilde{C}_{\delta}.\end{equation}
Inserting \eqref{ctttt1}, \eqref{eq-4-7^*} and \eqref{pup9} into \eqref{ct1}, we obtain the desired estimate \eqref{pup6}. \end{proof}

\begin{remark}
Remark that in the above proof, without \eqref{hao chu} we can not say
\[I_1(z)\leq\frac{1}{2\pi}\int_{B_{\frac{2\varrho_0}{\varepsilon_{i,p}}}(0)} \log{\frac{|y|}{|z-y|}}\left|1+\frac{v_{i,p}(y)}{p}\right|^{p}dy\] and so can not obtain the desired upper bound for $I_1(z)$. That is, Lemma \ref{guaguawa}   plays a crucial role in Lemma \ref{prop44}. As mentioned before, we can not expect the validity of Lemma \ref{guaguawa}  for general nodal solutions.
\end{remark}

\begin{corollary}\label{re52}
For any $\delta\in(0,1)$, let $p_{\delta}$, $R_{\delta}$ be as in Lemma \ref{prop44}. Then exists $C_\delta>0$ such that 
\begin{align}\label{pup10}
    \left|1+\frac{v_{i,p}(z)}{p}\right|^{p}\leq  \frac{C_\delta}{1+|z|^{4-\delta}}, \quad i=1,2,
\end{align}
provided $|z| \leq \frac{\varrho_0}{\varepsilon_{i,p}}$ and $p>p_{\delta}$. 
\end{corollary}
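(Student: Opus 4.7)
The plan is to split the range $|z|\leq \varrho_0/\varepsilon_{i,p}$ into two regions and treat each separately, then glue the resulting bounds using that $1+|z|^{4-\delta}$ is comparable to $\max\{1,|z|^{4-\delta}\}$.

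In the inner region $|z|\leq R_\delta$, I would simply invoke \eqref{hao chu}, which gives $0<1+v_{i,p}(z)/p\leq 1$, so $|1+v_{i,p}(z)/p|^{p}\leq 1$. Since $1+|z|^{4-\delta}\leq 1+R_\delta^{4-\delta}$ on this set, taking $C_\delta\geq 1+R_\delta^{4-\delta}$ already yields the claimed bound there.

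In the outer region $R_\delta\leq |z|\leq \varrho_0/\varepsilon_{i,p}$ I would combine Lemma \ref{prop44} with the elementary inequality $\log(1+s)\leq s$ for $s>-1$. Since \eqref{hao chu} guarantees $1+v_{i,p}(z)/p>0$, this inequality (applied with $s=v_{i,p}(z)/p$ and multiplied by $p$) yields
\begin{equation*}
\left(1+\frac{v_{i,p}(z)}{p}\right)^{p}\leq e^{v_{i,p}(z)}.
\end{equation*}
Plugging in the upper bound $v_{i,p}(z)\leq(4-\delta)\log(1/|z|)+C_\delta$ from Lemma \ref{prop44} gives
\begin{equation*}
\left|1+\frac{v_{i,p}(z)}{p}\right|^{p}\leq \frac{e^{C_\delta}}{|z|^{4-\delta}}.
\end{equation*}
Since we may assume $R_\delta\geq 1$, on this region $|z|^{4-\delta}$ and $1+|z|^{4-\delta}$ are comparable (with ratio at most $2$), which produces the stated form $C_\delta/(1+|z|^{4-\delta})$ after adjusting the constant.

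There is no real obstacle here; the corollary is a direct repackaging of Lemma \ref{prop44}. The only subtlety worth emphasizing is that one genuinely needs \eqref{hao chu} to ensure $1+v_{i,p}(z)/p>0$ throughout the ball $B_{2\varrho_0/\varepsilon_{i,p}}(0)$, because both the inner bound $|1+v_{i,p}/p|^{p}\leq 1$ and the use of $(1+s)^{p}\leq e^{ps}$ (via $\log(1+s)\leq s$) break without positivity. This is exactly where Lemma \ref{guaguawa} is being used in disguise, and it is why the same corollary cannot be expected for generic nodal solutions.
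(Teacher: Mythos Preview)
Your proof is correct and follows essentially the same route as the paper: split at $|z|=R_\delta$, use \eqref{hao chu} to bound by $1$ on the inner ball, and on the outer annulus combine the inequality $(1+s)^p\le e^{ps}$ (valid since $1+v_{i,p}/p>0$) with the decay from Lemma~\ref{prop44}. Your closing remark about the indispensability of \eqref{hao chu} also mirrors the paper's own observation.
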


\begin{proof}
By \eqref{hao chu} and Lemma \ref{prop44}, we have that for $R_{\delta}\leq |z|\leq \frac{\varrho_0}{\varepsilon_{i,p}}$,
\begin{align}
    \left|1+\frac{v_{i,p}(z)}{p}\right|^{p}=e^{p\log\left(1+\frac{v_{i,p}(z)}{p}\right)}\leq e^{v_{i,p}(z)}\leq  \frac{C}{|z|^{4-\delta}},
\end{align}
for $p>p_\delta$. Since \eqref{hao chu} also implies $|1+\frac{v_{i,p}(z)}{p}|^{p}\leq 1$ for $|z|\leq R_{\delta}$, we obtain \eqref{pup10}.
\end{proof}

Thanks to Corollary \ref{re52}, the dominated convergent theorem can be applied to obtain refined estimates; see below.

\begin{Lemma}\label{B.l1} For any fixed $\varrho\in (0,\frac{\varrho_0}{2}]$, we 
define
\begin{align}\label{pup43}
    C_{i,p}:=\int_{B_{\varrho}(x_{i,p})}|u_p(x)|^p dx=\frac{|u_p(x_{i,p})|}{p}\int_{B_{\frac{\varrho}{\varepsilon_{i,p}}}(0)}\left|1+\frac{v_{i,p}(z)}{p}\right|^p dz=O\left(\frac{1}{p}\right).
\end{align}
Then
\begin{align}\label{eqA-1-1}
    u_p(x)=C_{1,p}G(x_{1,p},x)-C_{2,p}G(x_{2,p},x)+o\left(\frac{\varepsilon_{p}}{p}\right), 
\end{align}
in $C^1(\overline{\Omega}\backslash\cup_{i=1}^2 B_{2\varrho}(x_{i,p}))$, where $\varepsilon_p=\max\{\varepsilon_{1,p}, \varepsilon_{2,p}\}$.
\end{Lemma}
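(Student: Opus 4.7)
The plan is to apply the Green representation
\begin{equation*}
u_p(x)=\int_{\Omega}G(x,y)|u_p(y)|^{p-1}u_p(y)\,dy
\end{equation*}
and split the integral into three pieces: over $B_{\varrho}(x_{1,p})$, over $B_{\varrho}(x_{2,p})$, and over the complementary set $\Omega\setminus(B_{\varrho}(x_{1,p})\cup B_{\varrho}(x_{2,p}))$. On the complement, \eqref{pup5} gives $p|u_p(y)|\leq C$, hence $|u_p(y)|^{p}\leq (C/p)^{p}$; since $\varepsilon_p$ is exponentially small in $p$ (in fact $\varepsilon_p\sim p^{-1/2}e^{-p/4}$) while $(C/p)^{p}$ decays super-exponentially, this contribution (together with its $\nabla_x$ counterpart, using $\int_\Omega|x-y|^{-1}\,dy\leq C$) is $o(\varepsilon_p/p)$. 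By Lemma \ref{guaguawa}, $u_p$ has a fixed sign on each $B_{\varrho}(x_{i,p})\subset B_{2\varrho_0}(x_i)$, so $|u_p|^{p-1}u_p=(-1)^{i-1}|u_p|^{p}$ there, and the two interior pieces will produce the main contributions $C_{1,p}G(x_{1,p},x)$ and $-C_{2,p}G(x_{2,p},x)$.

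For $x\in\overline{\Omega}\setminus\bigcup_{i=1}^{2}B_{2\varrho}(x_{i,p})$ the map $y\mapsto G(x,y)$ is smooth on $B_{\varrho}(x_{i,p})$ uniformly in such $x$, so I Taylor-expand at $y=x_{i,p}$,
\begin{equation*}
G(x,y)=G(x,x_{i,p})+\nabla_y G(x,x_{i,p})\cdot(y-x_{i,p})+O(|y-x_{i,p}|^{2}).
\end{equation*}
The zeroth-order term contributes exactly $G(x,x_{i,p})C_{i,p}$. Rescaling by $y=x_{i,p}+\varepsilon_{i,p}z$ and using $u_p(y)=u_p(x_{i,p})(1+v_{i,p}(z)/p)$ together with $\varepsilon_{i,p}^{-2}=p|u_p(x_{i,p})|^{p-1}$, the first-order term reduces to
\begin{equation*}
\nabla_y G(x,x_{i,p})\cdot\frac{\varepsilon_{i,p}|u_p(x_{i,p})|}{p}\int_{B_{\varrho/\varepsilon_{i,p}}(0)}z\left|1+\frac{v_{i,p}(z)}{p}\right|^{p}dz.
\end{equation*}
By Corollary \ref{re52} the integrand is dominated by the $L^{1}(\R^{2})$ function $C_\delta|z|/(1+|z|^{4-\delta})$ for any $\delta\in(0,1)$, while $(1+v_{i,p}/p)^{p}\to e^{U}$ pointwise with $U$ radial, whence $\int_{\R^{2}}z\,e^{U}\,dz=0$. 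Dominated convergence then gives that the $z$-integral is $o(1)$, so the first-order term is $o(\varepsilon_p/p)$. The quadratic Taylor error is bounded by $C\varepsilon_{i,p}^{2}\tfrac{|u_p(x_{i,p})|}{p}\int_{B_{\varrho/\varepsilon_{i,p}}(0)}|z|^{2}(1+v_{i,p}/p)^{p}\,dz=O(\varepsilon_{i,p}^{2-\delta}/p)=o(\varepsilon_p/p)$.

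The $C^{1}$ statement is obtained by differentiating the Green representation in $x$: for $x$ at distance $\geq 2\varrho$ from each $x_{i,p}$, $y\mapsto\nabla_x G(x,y)$ is again smooth on $B_{\varrho}(x_{i,p})$ uniformly in $x$, so the identical three-step scheme (Taylor expansion in $y$, rescaling, dominated convergence via Corollary \ref{re52}) gives the corresponding expansion for $\nabla u_p$. The main technical obstacle is the first-order Taylor term: to reach a genuine $o(\varepsilon_p/p)$ remainder (rather than merely $O(\varepsilon_p/p)$) one needs both the radial symmetry of $U$ (so that $\int_{\R^{2}}z\,e^{U}\,dz=0$) and the $p$-uniform pointwise decay supplied by Corollary \ref{re52}, and both of these ultimately rest on the fixed-sign property from Lemma \ref{guaguawa}.
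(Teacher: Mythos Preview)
Your proof is correct and follows essentially the same route as the paper: Green representation, three-way split, Taylor expansion of $G(x,\cdot)$ at each $x_{i,p}$, rescaling and dominated convergence via Corollary~\ref{re52} to kill the first-order term using $\int_{\R^2}z\,e^{U}\,dz=0$, and the bound $O(\varepsilon_{i,p}^{2-\delta}/p)$ on the quadratic remainder. Your write-up is in fact slightly more explicit than the paper's in justifying the quadratic error and the $\nabla_x$ version of the complement estimate.
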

\begin{proof}
Thanks to Corollary \ref{re52}, the proof is similar to that of \cite[Lemma 3.3]{4}. 
For $x\in \Omega\backslash\cup_{i=1}^2 B_{2\varrho}(x_{i,p})$, we have
\begin{align}\label{eqA-12}
    u_p(x)=&\int_{\Omega}G(y,x)|u_p(y)|^{p-1}u_p(y)dy\notag\\
    =& \int_{B_{\varrho}(x_{1,p})}G(y,x)|u_p(y)|^{p}dy-\int_{B_{\varrho}(x_{2,p})}G(y,x)|u_p(y)|^{p}dy\\
   &+\int_{\Omega\backslash(B_{\varrho}(x_{1,p})\cup B_{\varrho}(x_{2,p}))}G(y,x)|u_p(y)|^{p-1}u_p(y)dy.\notag
\end{align}
By the Taylor expansion and Corollary \ref{re52}, we have
{\allowdisplaybreaks
\begin{align}\label{eqA-14}
    &\int_{B_{\varrho}(x_{i,p})}G(y,x)|u_p(y)|^{p}dy-C_{i,p}G(x_{i,p},x)\notag\\
    =&\int_{B_{\varrho}(x_{i,p})}(G(y,x)-G(x_{i,p},x))|u_p(y)|^{p}dy\notag\\
    =&\frac{|u_p(x_{i,p})|}{p}\int_{B_{\frac{\varrho}{\varepsilon_{i,p}}}(0)}(G(\varepsilon_{i,p}z+x_{i,p},x)-G(x_{i,p},x))\left|1+\frac{v_{i,p}(z)}{p}\right|^p dz\notag\\
    =&\frac{\varepsilon_{i,p}|u_p(x_{i,p})|}{p}\sum_{j=1}^2\frac{\partial}{\partial y_j}G(x_{i,p},x)\int_{B_{\frac{\varrho}{\varepsilon_{i,p}}}(0)}z_j\left|1+\frac{v_{i,p}(z)}{p}\right|^p dz\notag\\
    &+\frac{\varepsilon_{i,p}^2|u_p(x_{i,p})|}{p}O\left(\int_{B_{\frac{\varrho}{\varepsilon_{i,p}}}(0)}|z|^2\left|1+\frac{v_{i,p}(z)}{p}\right|^p dz\right)\notag\\
    =&o\left(\frac{\varepsilon_{i,p}}{p}\right),
\end{align}
}%
where in the last equality we used 
\[\lim_{p\to\infty}\int_{B_{\frac{\varrho}{\varepsilon_{i,p}}}(0)}z_j\left|1+\frac{v_{i,p}(z)}{p}\right|^p dz=\int_{\R^2}z_j e^{U(z)}dz=0\]
by the dominated convergent theorem and \eqref{pup7}-\eqref{define use}.

Since \eqref{pup5} and \eqref{pup12} also imply
\begin{align}\label{eqA-15}
 \int_{\Omega\backslash(B_{\varrho}(x_{1,p})\cup B_{\varrho}(x_{2,p}))}G(y,x)|u_p(y)|^{p-1}u_p(y)dy=O\left(\frac{C^p}{p^p}\right)=o\left(\frac{\varepsilon_{p}}{p}\right),
\end{align}
we obtain \eqref{eqA-1-1} in $C(\overline{\Omega}\backslash\cup_{i=1}^2 B_{2\varrho}(x_{i,p}))$. A similar argument shows that \eqref{eqA-1-1} holds in $C^1(\overline{\Omega}\backslash\cup_{i=1}^2 B_{2\varrho}(x_{i,p}))$.
\end{proof}

\begin{Lemma}\label{lemma asy}
  There exists $C>0$ such that for $p$ large enough, 
  \begin{align}
      |v_{i,p}(z)| \leq C\log(2+|z|),\quad \text{for }|z|\leq \frac{\varrho_0}{\varepsilon_{i,p}}.
  \end{align}
\end{Lemma}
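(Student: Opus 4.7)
The plan is to combine the Green representation formula from the proof of Lemma \ref{prop44} with the sharp pointwise decay given by Corollary \ref{re52} to produce a logarithmic \emph{lower} bound on $v_{i,p}$. The matching upper bound $v_{i,p}(z) \leq 0$ is already supplied by \eqref{hao chu}, so it will suffice to prove $v_{i,p}(z) \geq -C\log(2+|z|)$, at which point the absolute-value estimate follows immediately by absorbing constants into the coefficient of $\log(2+|z|)$.

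Starting from the identity $v_{i,p}(z) = I_1(z) + I_2(z) + I_3(z)$ in \eqref{ct1}, the computations in the proof of Lemma \ref{prop44} already give $|I_2(z)|, |I_3(z)| \leq C$ uniformly for $|z| \leq \varrho_0/\varepsilon_{i,p}$, so the only term requiring fresh work is $I_1$. Splitting $\log(|y|/|z-y|) = \log|y| - \log|z-y|$, and writing $f_p(y) := |1+v_{i,p}(y)/p|^p$ (which is nonnegative on the domain of integration by \eqref{hao chu}), the task reduces to bounding $\int \log|y|\, f_p(y)\,dy$ and $\int \log|z-y|\, f_p(y)\,dy$. Taking $\delta = 1/2$ in Corollary \ref{re52} yields $f_p(y) \leq C(1+|y|^{7/2})^{-1}$, so $|\log|y||\, f_p(y)$ is uniformly integrable on $\R^2$, contributing $O(1)$. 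For the second integral, the elementary inequality $|z-y| \leq (1+|z|)(1+|y|)$ gives
\[
\int \log|z-y|\, f_p(y)\,dy \leq \log(2+|z|)\,\|f_p\|_{L^1} + \int \log(1+|y|)\, f_p(y)\,dy \leq C\log(2+|z|) + C,
\]
where $\|f_p\|_{L^1} \leq C$ follows from \eqref{pup16} and the residual integral is finite by Corollary \ref{re52}. Assembling the pieces produces $I_1(z) \geq -C\log(2+|z|) - C$, hence $v_{i,p}(z) \geq -C\log(2+|z|) - C$, and the claim follows.

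The only substantive point is controlling the two \emph{negative} contributions to $I_1$ — the singularity of $\log|y|$ near the origin and the growth of $-\log|z-y|$ where $|z-y|$ is large — and in both cases the quartic-order decay of $f_p$ furnished by Corollary \ref{re52} is exactly what is needed. In this sense the real work was done in Lemmas \ref{prop44}--\ref{re52}: the present lemma simply inverts the direction of the log estimate, and no new mechanism is required.
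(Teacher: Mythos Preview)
Your proof is correct, but the route differs from the paper's in a genuine way. The paper does \emph{not} invoke Corollary \ref{re52} here; it works only with the uniform $L^1$ bound \eqref{pup16} and obtains a \emph{two-sided} estimate on $I_1$ directly. Concretely, the paper first replaces $\log|y|$ by $\log(|y|+1)$ at cost $O(1)$, then splits the remaining kernel $\log\frac{|y|+1}{|z-y|}$ according to whether $|z-y|\leq 1$ or $|z-y|\geq 1$, using in the latter region the elementary inequality $\bigl|\log|z-y|-\log|z|-\log(|y|+1)\bigr|\leq C\log|z|$ together with \eqref{pup16}. Your approach is arguably slicker: once you have observed $v_{i,p}\leq 0$ from \eqref{hao chu}, only a \emph{lower} bound on $I_1$ is needed, and that follows from the trivial inequality $|z-y|\leq(1+|z|)(1+|y|)$ as soon as the pointwise decay of Corollary \ref{re52} is available. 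The trade-off is that your argument leans on Corollary \ref{re52} (there is no circularity, since that corollary rests only on Lemma \ref{prop44} and \eqref{hao chu}), whereas the paper's argument is self-contained at the $L^1$ level and would survive even without the pointwise decay.

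One small technical point you should patch: the domain of integration in $I_1$ from \eqref{ct1} is $B_{2\varrho_0/\varepsilon_{i,p}}(0)$, but Corollary \ref{re52} is stated only for $|y|\leq \varrho_0/\varepsilon_{i,p}$. The missing annulus $\varrho_0/\varepsilon_{i,p}\leq|y|\leq 2\varrho_0/\varepsilon_{i,p}$ contributes negligibly---there $\varepsilon_{i,p}y+x_{i,p}$ lies in a compact set away from $x^{\pm}$, so $f_p(y)\leq (C/p)^p$ by \eqref{pup5}---but this should be said explicitly.
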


\begin{proof} 
Since $v_{i,p}\rightarrow U$ in $C^2_{loc}(\mathbb{R}^2)$, there exists $C>0$ such that $|v_{i,p}(z)|\leq C$ for $|z|\leq 2$ for $p$ large enough.  
Now consider $2\leq|z|\leq \varrho_0/\varepsilon_{i,p}$. 
Then by \eqref{ct1}-\eqref{eq-4-7^*},
\begin{align}\label{l53-1}
    v_{i,p}(z)=\frac{1}{2\pi}\int_{|y|\leq \frac{2\varrho_0}{\varepsilon_{i,p}}} \log{\frac{|y|}{|z-y|}}\left|1+\frac{v_{i,p}(y)}{p}\right|^{p} dy+O(1).
\end{align}
By \eqref{hao chu} and \eqref{pup16}, we have
\begin{align}\label{pup17}
&\left|\int_{|y|\leq \frac{2\varrho_0}{\varepsilon_{i,p}}} \log\frac{|y|}{|y|+1}\left|1+\frac{v_{i,p}(y)}{p}\right|^{p}dy\right|\nonumber\\
\leq &C\int_{|y|\leq 2} \left|\log\frac{|y|}{|y|+1}\right|dy+
C\int_{|y|\leq \frac{2\varrho_0}{\varepsilon_{i,p}}, |y|> 2} \left|1+\frac{v_{i,p}(y)}{p}\right|^{p}dy\leq C,
\end{align}
\begin{align}\label{l53-2}
   &\left| \int_{|y|\leq \frac{{2\varrho_0}}{{\varepsilon_{i,p}}}, |z-y|\leq 1} \log{\frac{|y|+1}{|z-y|}}\left|1+\frac{v_{i,p}(y)}{p}\right|^{p} dy\right|\notag\\
   \leq & C\int_{|z-y|\leq 1} \log\frac{1}{|z-y|}dy+C\log(|z|+2) \int_{|y|\leq \frac{{2\varrho_0}}{{\varepsilon_{i,p}}}, |z-y|\leq 1} \left|1+\frac{v_{i,p}(y)}{p}\right|^{p} dy\notag\\
    \leq & C\log (2+|z|).
\end{align}
Note that for $|z-y|\geq 1$ and $|z|\geq 2$, we have (see e.g. \cite[Lemma 8.2.3]{10})
\[
    \left|\frac{\log|z-y|-\log|z|-\log(|y|+1)}{\log|z|}\right|\leq C,
\]
i.e. 
$|\log{\frac{|y|+1}{|z-y|}}|\leq C \log|z|
$, 
so
\begin{align}\label{l53-3}
    &\left| \int_{|y|\leq \frac{{2\varrho_0}}{{\varepsilon_{i,p}}}, |z-y|\geq 1} \log{\frac{|y|+1}{|z-y|}}\left|1+\frac{v_{i,p}(y)}{p}\right|^{p} dy\right|\notag\\
    \leq &C\log|z|\int_{|y|\leq \frac{{2\varrho_0}}{{\varepsilon_{i,p}}}, |z-y|\geq 1} \left|1+\frac{v_{i,p}(y)}{p}\right|^{p} dy
    \leq  C\log |z|.
\end{align}
The proof is complete by inserting \eqref{pup17}-\eqref{l53-3} into \eqref{l53-1}.
\end{proof}

\begin{Proposition}\label{prop54}
Define \[w_{i,p}:=p(v_{i,p}-U).\] Then for any fixed $\tau\in(0,1)$, there exists $C_{\tau}>0$ such that
\begin{align}\label{eq-4-37}
    |w_{i,p}(z)|\leq C_{\tau} \left(1+|z|\right)^{\tau} \quad\quad\text{in}\quad B_{\frac{\varrho_0}{\varepsilon_{i,p}}}(0),\quad i=1,2.
\end{align}
Consequently, $w_{i,p}\rightarrow w_0$ in $C^{2}_{loc}(\mathbb{R}^2)$ as $p\rightarrow\infty$, where $w_0$ solves the non-homogeneous linear equation
\begin{align}\label{re-eq1}
    -\Delta w_0-e^U w_0=-\frac{U^2}{2}e^U\quad\quad\text{in}\quad\mathbb{R}^2,
\end{align}
and for any $\tau\in(0,1)$,
\begin{align}\label{re-eq2}
    |w_0(z)|\leq C_{\tau} \left(1+|z|\right)^{\tau}, \quad z\in\mathbb{R}^2.
\end{align}
Moreover,
\begin{align}
    \int_{\mathbb{R}^2}\Delta w_0 dz=24\pi.
\end{align}

\end{Proposition}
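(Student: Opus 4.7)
The plan is to study $w_{i,p}=p(v_{i,p}-U)$ as a controlled perturbation of $U$ and extract a limit satisfying the stated linearized equation. First, since $-\Delta v_{i,p}=(1+v_{i,p}/p)^{p}$ on $B_{\varrho_0/\varepsilon_{i,p}}(0)$ by Lemma \ref{guaguawa} and $-\Delta U=e^{U}$, subtracting gives
\[
-\Delta w_{i,p}=p\bigl[(1+v_{i,p}/p)^{p}-e^{U}\bigr].
\]
Expanding $p\log(1+v_{i,p}/p)=v_{i,p}-v_{i,p}^{2}/(2p)+O(v_{i,p}^{3}/p^{2})$, then inserting $v_{i,p}=U+w_{i,p}/p$ and $e^{w_{i,p}/p}=1+w_{i,p}/p+O((w_{i,p}/p)^{2})$, the right-hand side collapses to
\[
-\Delta w_{i,p}-e^{U}w_{i,p}=-\tfrac{1}{2}U^{2}e^{U}+E_{i,p},
\]
with a remainder $E_{i,p}$ that tends to $0$ pointwise and in $L^{1}_{\loc}$ thanks to the logarithmic control $|v_{i,p}|\le C\log(2+|z|)$ from Lemma \ref{lemma asy} and the sharp polynomial decay from Corollary \ref{re52}.

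Second, to establish the uniform bound \eqref{eq-4-37}, I would argue by contradiction. If $M_{p}:=\sup_{|z|\le \varrho_0/\varepsilon_{i,p}}|w_{i,p}(z)|(1+|z|)^{-\tau}\to\infty$ along a subsequence, attained at $z_{p}$, set $\tilde w_{p}:=w_{i,p}/M_{p}$, so $|\tilde w_{p}|\le(1+|z|)^{\tau}$ and $-\Delta\tilde w_{p}-e^{U}\tilde w_{p}=E_{i,p}/M_{p}\to 0$. If $|z_{p}|$ stays bounded, elliptic regularity extracts a $C^{2}_{\loc}$-limit $\tilde w_{0}$ solving the homogeneous equation \eqref{linear U} with sublinear growth; Lemma \ref{Linear repe} forces $\tilde w_{0}=\sum_j c_{j}\psi_{j}$ bounded, and controlling the rescaled boundary values of $\tilde w_{p}$ on $\partial B_{\varrho_0/\varepsilon_{i,p}}$ through Lemma \ref{B.l1} (which become negligible after division by $M_{p}$) forces all $c_{j}=0$, contradicting $|\tilde w_{p}(z_{p})|=(1+|z_{p}|)^{\tau}>0$. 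The case $|z_{p}|\to\infty$ is ruled out by a second rescaling $\hat w_{p}(y):=\tilde w_{p}(|z_{p}|y)/|z_{p}|^{\tau}$, whose limit is harmonic with at most $(1+|y|)^{\tau}$ growth (so constant by Liouville), again inconsistent with the boundary behavior. Once \eqref{eq-4-37} is proved, standard elliptic regularity extracts $w_{0}\in C^{2}_{\loc}(\mathbb{R}^{2})$ satisfying \eqref{re-eq1} by passage to the limit, and \eqref{re-eq2} is inherited from \eqref{eq-4-37} via a pointwise limit.

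Finally, for $\int_{\mathbb{R}^{2}}\Delta w_{0}\,dz=24\pi$, I would test \eqref{re-eq1} against the bounded radial element $\psi_{3}(z)=(8-|z|^{2})/(8+|z|^{2})$ from Lemma \ref{Linear repe}, which satisfies $-\Delta\psi_{3}=e^{U}\psi_{3}$, tends to $-1$ at infinity, and has $|\nabla\psi_{3}|=O(|z|^{-3})$. Green's identity on $B_{R}$, using these two equations so that the $e^{U}\psi_{3}w_{0}$ terms cancel, collapses to
\[
-\int_{B_{R}}\psi_{3}\,\tfrac{U^{2}}{2}e^{U}\,dz=\int_{\partial B_{R}}\bigl(-\psi_{3}\partial_{\nu}w_{0}+w_{0}\partial_{\nu}\psi_{3}\bigr)\,dS.
\]
The term involving $\partial_{\nu}\psi_{3}$ is bounded by $CR^{\tau-2}\to 0$ by \eqref{re-eq2}, while $-\psi_{3}(R)\int_{\partial B_{R}}\partial_{\nu}w_{0}\,dS=-\psi_{3}(R)\int_{B_{R}}\Delta w_{0}\to\int_{\mathbb{R}^{2}}\Delta w_{0}$ as $R\to\infty$, since $\psi_{3}(R)\to -1$ and $\Delta w_{0}\in L^{1}(\mathbb{R}^{2})$ (being bounded by $|e^{U}w_{0}|+\tfrac{1}{2}U^{2}e^{U}$). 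A direct polar computation with $s=1+|z|^{2}/8$ evaluates $\int_{\mathbb{R}^{2}}\psi_{3}\tfrac{U^{2}}{2}e^{U}\,dz=-24\pi$, which yields the identity. The principal technical obstacle throughout this plan is the contradiction argument for \eqref{eq-4-37}: one must carefully use Lemma \ref{B.l1} to verify that the rescaled boundary traces of $\tilde w_{p}$ on $\partial B_{\varrho_0/\varepsilon_{i,p}}$ vanish in the limit, which requires tracking the logarithmic part of $G$ after rescaling and matching it against $U$ evaluated on this far-out shell.
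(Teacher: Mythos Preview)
Your overall architecture matches the approach the paper imports from \cite[Proposition~3.5]{4}: derive the equation for $w_{i,p}$, run a blow-up contradiction for the weighted bound \eqref{eq-4-37}, pass to the limit to obtain \eqref{re-eq1}--\eqref{re-eq2}, and finally compute $\int_{\mathbb{R}^2}\Delta w_0$. The last part, testing \eqref{re-eq1} against $\psi_3$ and evaluating the radial integral, is carried out correctly.

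There is, however, a genuine gap in the contradiction step. Once you have extracted a limit $\tilde w_0=\sum_j c_j\psi_j$ via Lemma~\ref{Linear repe}, your mechanism for forcing $c_j=0$ --- ``controlling the rescaled boundary values of $\tilde w_p$ on $\partial B_{\varrho_0/\varepsilon_{i,p}}$ through Lemma~\ref{B.l1}'' --- does not work. The sphere $\partial B_{\varrho_0/\varepsilon_{i,p}}$ escapes to infinity, so its values do not constrain $\tilde w_0$ on any compact set; and even if they did, $\psi_1,\psi_2\to 0$ at infinity, so vanishing boundary data cannot exclude $c_1,c_2\neq 0$. Lemma~\ref{B.l1} describes $u_p$ away from the spikes, not $w_{i,p}$, and the computation you allude to (matching $pu_p$ against $U$ on that shell) only pins $w_{i,p}$ down to order $O(p)$, which is useless after division by $M_p$. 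The correct --- and much simpler --- device, which is what \cite{4} uses, is the origin: from \eqref{pup7} and \eqref{define use} one has $v_{i,p}(0)=U(0)=0$ and $\nabla v_{i,p}(0)=\nabla U(0)=0$, hence $w_{i,p}(0)=0$ and $\nabla w_{i,p}(0)=0$ for every $p$. Passing to the limit gives $\tilde w_0(0)=0$ and $\nabla\tilde w_0(0)=0$, and since $\psi_3(0)=1$, $\partial_1\psi_1(0)=\partial_2\psi_2(0)=\tfrac{1}{8}$ with all other values and first derivatives of the $\psi_j$ at $0$ vanishing, this immediately yields $c_1=c_2=c_3=0$, contradicting $|\tilde w_0(z_0)|=(1+|z_0|)^\tau>0$.

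A secondary point: writing the equation as $-\Delta w_{i,p}-e^{U}w_{i,p}=-\tfrac12 U^{2}e^{U}+E_{i,p}$ is fine for identifying the limit PDE, but in the contradiction argument the remainder $E_{i,p}$ contains quadratic terms in $w_{i,p}$ (of the type $e^{U}w_{i,p}^{2}/p$), and a priori $M_p$ is only $O(p)$, so $E_{i,p}/M_p$ does not obviously vanish. It is cleaner to write, via the mean value theorem,
\[
-\Delta w_{i,p}=\Bigl(1+\tfrac{\xi_p}{p}\Bigr)^{p-1}w_{i,p}+p\Bigl[\bigl(1+\tfrac{U}{p}\bigr)^{p}-e^{U}\Bigr],
\]
with $\xi_p$ between $U$ and $v_{i,p}$; the coefficient inherits the decay of Corollary~\ref{re52}, and the inhomogeneous term is bounded and hence is $o(1)$ after division by $M_p$. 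This is the form used in \cite{4}.
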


\begin{proof}
Thanks to \eqref{hao chu} and the previous estimates as stated in Lemmas \ref{prop44}-\ref{lemma asy}, 
the proof is the same as \cite[Proposition 3.5]{4} where the same statements were proved for positive solutions of \eqref{p_p}, so we omit the details here.
\end{proof}

\begin{Proposition}\label{prop56}
Define \[k_{i,p}:=p(w_{i,p}-w_0),\] where $w_{i,p}$ and $w_0$ are as in Proposition \ref{prop54}. Then for any fixed $\tau\in (0,1)$, there exists $C_{\tau}>0$ such that
\begin{align}\label{eq-4-10-1}
    |k_{i,p}(z)|\leq C \left(1+|z|\right)^{\tau} \quad\quad\text{in}\quad B_{\frac{\varrho_0}{\varepsilon_{i,p}}}(0),\quad i=1,2.
\end{align}
Consequently,
\begin{align}
    v_{i,p}=U+\frac{w_0}{p}+\frac{k_{i,p}}{p^2}=U+\frac{w_0}{p}+\frac{O(1)}{p^2}\quad\quad\text{in}\quad C^2_{loc}(\mathbb{R}^2).
\end{align}
\end{Proposition}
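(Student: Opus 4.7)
The plan is to run the scheme of Proposition \ref{prop54} one order deeper in $1/p$. Substituting the ansatz $v_{i,p} = U + \frac{w_0}{p} + \frac{k_{i,p}}{p^2}$ into \eqref{pup7} and using \eqref{hao chu} to drop the absolute value, I would Taylor expand $(1+v_{i,p}/p)^{p} = \exp\bigl(p\log(1+v_{i,p}/p)\bigr)$ through order $1/p^{2}$, using $p\log(1+t) = pt - \frac{pt^{2}}{2} + \frac{pt^{3}}{3} + O(pt^{4})$ with $t = v_{i,p}/p$. After cancelling $-\Delta U = e^{U}$ at order $O(1)$ and equation \eqref{re-eq1} at order $O(1/p)$, the $O(1/p^{2})$ balance produces
\begin{equation*}
-\Delta k_{i,p} - e^{U} k_{i,p} = e^{U}\Phi(U,w_{0}) + R_p(z),
\end{equation*}
where $\Phi(U,w_0) = \frac{w_{0}^{2}}{2} - U w_{0} - \frac{U^{2} w_{0}}{2} + \frac{U^{3}}{3} + \frac{U^{4}}{8}$ is a fixed source and, by the prior estimates in Lemmas \ref{lemma asy}--\ref{prop54}, the remainder satisfies $|R_p(z)| \leq \frac{C}{p}\, e^{U(z)}(1+|z|)^{C\tau}$ on $B_{\varrho_{0}/\varepsilon_{i,p}}(0)$. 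The prefactor $e^{U}$ supplies the $(1+|z|)^{-4}$ decay needed to integrate the source against any polynomial weight.

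The pointwise bound \eqref{eq-4-10-1} is then obtained by a blow-up argument mimicking \cite[Proposition 3.5]{4}. Suppose, for contradiction, that $M_{p} := \sup_{|z| \leq \varrho_{0}/\varepsilon_{i,p}} |k_{i,p}(z)|/(1+|z|)^{\tau} \to \infty$ along a subsequence, with supremum attained at $z_p$. The normalized function $\widetilde k_{p} := k_{i,p}/M_{p}$ satisfies $\|\widetilde k_{p}/(1+|z|)^{\tau}\|_{\infty} = 1$ and solves $-\Delta \widetilde k_{p} - e^{U}\widetilde k_{p} = (e^{U}\Phi + R_p)/M_{p} \to 0$ locally uniformly. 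A subsequential $C^{2}_{loc}$ limit $\widetilde k_{\infty}$ therefore solves the homogeneous equation $-\Delta \widetilde k_{\infty} = e^{U}\widetilde k_{\infty}$ with $|\widetilde k_{\infty}(z)| \leq (1+|z|)^{\tau}$, and Lemma \ref{Linear repe} forces $\widetilde k_{\infty} = c_{1}\psi_{1} + c_{2}\psi_{2} + c_{3}\psi_{3}$. The normalizations $v_{i,p}(0) = 0$ and $\nabla v_{i,p}(0) = 0$ (since $0$ is the maximum point of $v_{i,p}$), together with $U(0) = 0 = |\nabla U(0)|$ and $w_{0}(0) = 0 = |\nabla w_{0}(0)|$ (the latter by passing $w_{i,p}(0) = 0$ and $\nabla w_{i,p}(0) = 0$ to the $C^{2}_{loc}$ limit in Proposition \ref{prop54}), give $k_{i,p}(0) = 0 = |\nabla k_{i,p}(0)|$, whence $\widetilde k_{\infty}(0) = 0 = |\nabla \widetilde k_{\infty}(0)|$. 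Since $\psi_{3}(0) = 1$, $\nabla\psi_{3}(0) = 0$, $\psi_{1}(0) = \psi_{2}(0) = 0$, and $\nabla\psi_{1}(0), \nabla\psi_{2}(0)$ are linearly independent, these three scalar conditions force $c_{1} = c_{2} = c_{3} = 0$, hence $\widetilde k_{\infty} \equiv 0$.

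To close the contradiction I would split on $z_p$. If $\{z_p\}$ stays bounded, $C^{2}_{loc}$ convergence immediately gives $1 = |\widetilde k_p(z_p)|/(1+|z_p|)^{\tau} \to 0$, absurd. If $|z_p| \to \infty$, I would use the Green representation for $\widetilde k_p$ on $\Omega_{i,p}$ (as in \eqref{ct1}) and estimate the integral of $(e^{U}\Phi + R_p/M_{p})$ against the kernel $\frac{1}{2\pi}\log\frac{|y|+1}{|z_p - y|}$ by splitting into inner, middle, and far pieces; the $(1+|y|)^{-4+C\tau}$ decay of the source combined with the standard kernel bounds (as in Lemma \ref{lemma asy}) yields $|\widetilde k_p(z_p)| = o((1+|z_p|)^{\tau})$, again contradicting the choice of $z_p$. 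Once \eqref{eq-4-10-1} is in hand, interior elliptic regularity promotes it to a $C^{2}_{loc}$ bound, yielding the announced expansion $v_{i,p} = U + w_{0}/p + O(1)/p^{2}$ in $C^{2}_{loc}(\R^{2})$. I expect the main technical obstacle to be precisely the far-field estimate: one must track the interaction between the polynomial weight $(1+|z_p|)^{\tau}$, the $e^{U}$-decay of the fixed source, and the $1/p$-smallness of $R_{p}$ with enough precision to beat the normalization $|\widetilde k_p(z_p)|/(1+|z_p|)^{\tau} = 1$, which is the delicate point in the analogous step of \cite[Proposition 3.5]{4}.
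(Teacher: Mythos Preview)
Your proposal is correct and follows essentially the same approach as the paper, which simply invokes \cite[Proposition 3.9]{4}: derive the equation for $k_{i,p}$ by expanding $(1+v_{i,p}/p)^{p}$ one order deeper, run a blow-up/contradiction argument normalizing by $M_p$, classify the sublinear limit via Lemma~\ref{Linear repe}, and kill the three constants using $k_{i,p}(0)=0=|\nabla k_{i,p}(0)|$. One minor caveat: your bound $|R_p|\leq \frac{C}{p}e^{U}(1+|z|)^{C\tau}$ is slightly optimistic for the cross terms in the remainder that still involve $k_{i,p}$ (e.g.\ $e^{U}k_{i,p}(w_0-U)/p$), since the only a priori control on $k_{i,p}$ at this stage is $|k_{i,p}|\leq Cp(1+|z|)^{\tau}$ coming from Proposition~\ref{prop54}; however, these terms are either absorbable into the potential as a $(1+O((1+|z|)^{\tau}/p))$-perturbation of $e^{U}$, or carry an extra factor $1/M_p$ after normalization, so the limiting equation is still the clean linearized Liouville equation and the argument closes exactly as you describe.
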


\begin{proof}
Again the proof is same as \cite[Proposition 3.9]{4}.
\end{proof}

\begin{Proposition}\label{prop57} Fix any $\varrho\in (0,\frac{\varrho_0}{2}]$. Then
for any given small $\delta>0$ and $i=1,2$,
\begin{align}
    \int_{B_{\frac{\varrho}{\varepsilon_{i,p}}}(0)}
        \left|1
        +\frac{v_{i,p}(z)}{p}\right|^p dz=8\pi-\frac{24\pi}{p}+O\left(\frac{1}{p^{2-\delta}}\right),\label{sharp533}
\end{align}
        \begin{align}
    \int_{B_{\frac{\varrho}{\varepsilon_{i,p}}}(0)}
        \log{|z|}\left|1
        +\frac{v_{i,p}(z)}{p}\right|^p dz=12\pi\log{2}+O\left(\frac{1}{p}\right),\label{sharp534}
        \end{align}
        \begin{align}
    \int_{B_{\frac{\varrho}{\varepsilon_{i,p}}}(0)}
        \left|1
        +\frac{v_{i,p}(z)}{p}\right|^{p+1} dz=8\pi-\frac{40\pi}{p}+O\left(\frac{1}{p^{2-\delta}}\right).\label{sharp535^*}
        \end{align}

\end{Proposition}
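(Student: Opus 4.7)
The plan is to use the refined expansion $v_{i,p}=U+w_0/p+O(1)/p^2$ from Proposition \ref{prop56} together with the uniform decay $|1+v_{i,p}/p|^p\le C_\delta(1+|z|)^{-(4-\delta)}$ from Corollary \ref{re52}, which serves as an integrable dominating function on the growing domain $B_{\varrho_0/\varepsilon_{i,p}}(0)$. The central computation is a Taylor expansion of the exponent: by Lemma \ref{lemma asy} and Proposition \ref{prop56},
\[
p\log\!\left(1+\frac{v_{i,p}}{p}\right)=v_{i,p}-\frac{v_{i,p}^2}{2p}+O\!\left(\frac{v_{i,p}^3}{p^2}\right)=U+\frac{1}{p}\!\left(w_0-\frac{U^2}{2}\right)+\frac{\mathcal{R}_{i,p}(z)}{p^2},
\]
with $|\mathcal{R}_{i,p}(z)|\le C_\tau(1+|z|)^\tau$ uniformly on $B_{\varrho_0/\varepsilon_{i,p}}(0)$ for any $\tau\in(0,1)$. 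Exponentiating yields
\[
\left|1+\frac{v_{i,p}}{p}\right|^p=e^U\!\left(1+\frac{w_0-U^2/2}{p}\right)+\frac{\Theta_{i,p}(z)}{p^2},
\]
where $|\Theta_{i,p}(z)|$ is controlled, via Corollary \ref{re52}, by a power-of-$\log$ multiple of $(1+|z|)^{-(4-\delta)}$, so that $\int_{B_{\varrho_0/\varepsilon_{i,p}}}|\Theta_{i,p}|\,dz=O(p^\delta)$ and the overall remainder is $O(p^{-(2-\delta)})$.

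For \eqref{sharp533}, the leading term yields $\int_{\mathbb{R}^2}e^U\,dz=8\pi$ up to an exponentially small tail. The key identification of the $1/p$-correction uses the equation $-\Delta w_0-e^Uw_0=-U^2e^U/2$ from Proposition \ref{prop54}, which forces $e^U(w_0-U^2/2)=-\Delta w_0$, so
\[
\int_{\mathbb{R}^2}e^U\!\left(w_0-\frac{U^2}{2}\right)dz=-\int_{\mathbb{R}^2}\Delta w_0\,dz=-24\pi,
\]
using $\int_{\mathbb{R}^2}\Delta w_0=24\pi$ from Proposition \ref{prop54}; this produces $8\pi-24\pi/p+O(p^{-(2-\delta)})$. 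For \eqref{sharp534}, the same expansion weighted by $\log|z|$ reduces at leading order to $2\pi\int_0^\infty\log r\cdot r/(1+r^2/8)^{-2}\,dr$; the substitution $u=1+r^2/8$ splits this into $4\pi\log 8+4\pi\int_0^\infty\log v/(v+1)^2\,dv=12\pi\log 2$, using that $\int_0^\infty\log v/(v+1)^2\,dv=0$ (from $v\mapsto 1/v$). For \eqref{sharp535^*}, I would write $(p+1)\log(1+v_{i,p}/p)=p\log(1+v_{i,p}/p)+\log(1+v_{i,p}/p)$, so the extra factor adds $U/p$ to the exponent; the $1/p$-correction then gains an additional $\int_{\mathbb{R}^2}Ue^U\,dz/p$. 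Integration by parts after the same change of variables gives $\int_{\mathbb{R}^2}Ue^U\,dz=-16\pi\int_1^\infty\log u\,u^{-2}\,du=-16\pi$, producing the combined correction $(-24\pi-16\pi)/p=-40\pi/p$.

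The hard part will be quantifying the $O(p^{-(2-\delta)})$ remainder on the expanding domain: because $k_{i,p}$ from Proposition \ref{prop56} is only polynomially bounded and the cubic Taylor term $v_{i,p}^3/(3p^2)$ grows like $(\log(2+|z|))^3$, one must split $B_{\varrho/\varepsilon_{i,p}}(0)$ into an inner ball $\{|z|\le R_p\}$ with $R_p=p^{\delta/2}$ and an outer annulus $\{R_p<|z|\le\varrho/\varepsilon_{i,p}\}$. On the inner piece the pointwise $C^2_{\mathrm{loc}}$-rate coming from Proposition \ref{prop56} controls $\Theta_{i,p}$; on the outer annulus the global decay $(1+|z|)^{-(4-\delta)}$ of Corollary \ref{re52} absorbs the polynomial-in-$\log$ weights, while the extra factor $|z|^{-\delta/2}$ available there handles the loss coming from the growth of $\mathcal{R}_{i,p}$. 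The tail outside $B_{\varrho/\varepsilon_{i,p}}(0)$ is exponentially small thanks to the decay of $|u_p|^p$ away from the concentration points, so it is harmless for all three estimates.
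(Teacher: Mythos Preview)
Your overall strategy coincides with the paper's: expand $|1+v_{i,p}/p|^p$ through $p\log(1+v_{i,p}/p)=v_{i,p}-v_{i,p}^2/(2p)+O(v_{i,p}^3/p^2)$, substitute $v_{i,p}=U+w_0/p+k_{i,p}/p^2$, and read off the coefficients from $\int_{\mathbb{R}^2}e^U=8\pi$, the identity $e^U(w_0-U^2/2)=-\Delta w_0$ together with $\int_{\mathbb{R}^2}\Delta w_0=24\pi$, the extra contribution $\int_{\mathbb{R}^2}Ue^U=-16\pi$ for the $(p+1)$-power, and $\int_{\mathbb{R}^2}\log|z|\,e^U=12\pi\log 2$. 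All of these computations are correct.

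The gap is in the remainder control. Your proposed split at $R_p=p^{\delta/2}$ cannot yield the target error $O(p^{-(2-\delta)})$. On the outer annulus $\{R_p\le|z|\le\varrho/\varepsilon_{i,p}\}$ you do not have a pointwise expansion with a $1/p^2$-small remainder; you only have Corollary~\ref{re52}, which bounds $|1+v_{i,p}/p|^p$ itself (and likewise the approximant $e^U(1+\cdots)$) by $C_{\delta'}(1+|z|)^{-(4-\delta')}$. Hence the best you can say about the outer contribution to the \emph{full} integrals is
\[
\int_{|z|\ge R_p}\frac{C_{\delta'}}{(1+|z|)^{4-\delta'}}\,dz
=O\!\bigl(R_p^{-(2-\delta')}\bigr)
=O\!\bigl(p^{-\delta(2-\delta')/2}\bigr)\approx O(p^{-\delta}),
\]
which is nowhere near $O(p^{-(2-\delta)})$. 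In the same vein, the asserted global bound $|\Theta_{i,p}(z)|\le(\text{polylog})\,(1+|z|)^{-(4-\delta)}$ is not available: on the outer region one only has $|\Theta_{i,p}|\le Cp^2(1+|z|)^{-(4-\delta')}$, because neither the Taylor remainder of $\log(1+\cdot)$ nor the expansion $e^x=1+x+O(x^2)$ is uniformly valid once $|v_{i,p}|/p$ and $(w_0-U^2/2)/p$ cease to be small.

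The paper resolves this simply by taking the cutoff $R_p=p$. Then on $\{|z|\le p\}$ one has $|v_{i,p}|=O(\log p)$ by Lemma~\ref{lemma asy}, and $|w_0|,|k_{i,p}|=O(p^{\delta/2})$ by Propositions~\ref{prop54}--\ref{prop56} with $\tau=\delta/2$, so every Taylor remainder is uniformly $O(p^{-(2-\delta)})$ there; on $\{p\le|z|\le\varrho/\varepsilon_{i,p}\}$ the crude bound of Corollary~\ref{re52} already gives
\[
\int_{|z|\ge p}\frac{dz}{(1+|z|)^{4-\delta}}=O\!\bigl(p^{-(2-\delta)}\bigr),
\]
which is exactly the target error. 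Replace $R_p=p^{\delta/2}$ by $R_p=p$ and your argument goes through.
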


\begin{proof} The proof is similar to that of \cite[Proposition 3.10]{4} where \eqref{sharp533}-\eqref{sharp534} were proved for positive solutions. 
Note from the definition \eqref{pup12} of $\varepsilon_{i,p}$ that $p\ll \varrho/\varepsilon_{i,p}$ for $p$ large enough. Denote
\[R(p,\frac{\varrho}{\varepsilon_{i,p}}):=\{y \in \mathbb{R}^2 :p \leq|y|\leq \frac{\varrho}{\varepsilon_{i,p}}\}.\] 
By \eqref{hao chu} and Corollary \ref{re52}, we have 
\begin{align}\label{sharp535}
    \int_{R(p,\frac{\varrho}{\varepsilon_{i,p}})}\left|1+\frac{v_{i,p}}{p}\right|^{p+1} dz&\leq\int_{R(p,\frac{\varrho}{\varepsilon_{i,p}})}\left|1+\frac{v_{i,p}}{p}\right|^p dz\\
        &=O\left(\int_{R(p,\frac{\varrho}{\varepsilon_{i,p}})}\frac{dz}{1+|z|^{4-\delta}}\right)=O\left(\frac{1}{p^{2-\delta}}\right),\nonumber
 \end{align}
 \begin{align}\label{sharp535^*1}
    \int_{R(p,\frac{\varrho}{\varepsilon_{i,p}})}\log{|z|}\left|1+\frac{v_{i,p}}{p}\right|^p dz
        =O\left(\int_{R(p,\frac{\varrho}{\varepsilon_{i,p}})}\frac{\log{|z|}dz}{1+|z|^{4-\frac{\delta}{2}}}\right)=O\left(\frac{1}{p^{2-\delta}}\right).
\end{align}
Recall that $v_{i,p}=U+w_{i,p}/{p}=U+w_0/p+k_{i,p}/p^2$.
By $\eqref{eq-4-37}$ we have
\begin{align}\label{eq-4-61}
    |v_{i,p}|=\left|U+\frac{w_{i,p}}{p}\right|=O\left(\log{p}+p^{\tau-1}\right)=O\left(\log{p}\right),\quad\text{for } |z|\leq p,
\end{align}
and \eqref{re-eq2}-\eqref{eq-4-10-1} imply (take $\tau=\frac{\delta}{2}$)
\begin{align}
    |w_0|=O\left(p^{\frac{\delta}{2}}\right),\quad |k_{i,p}|=O\left(p^{\frac{\delta}{2}}\right),\quad\text{for }  |z|\leq p.
\end{align}
These, together with \eqref{hao chu} and the Taylor expansion, yield that 
\begin{align}\label{sharp537}
    \left|1+\frac{v_{i,p}}{p}\right|^p=&\exp\left(p\log{\left(1+\frac{v_{i,p}}{p}\right)}\right)=\exp\left(v_{i,p}-\frac{v_{i,p}^2}{2p}+O\left(\frac{v_{i,p}^3}{p^2}\right)\right)\notag\\
    =&\exp\left(\left(U+\frac{w_0}{p}+\frac{k_{i,p}}{p^2}\right)-\frac{1}{2p}\left(U+\frac{w_0}{p}+\frac{k_{i,p}}{p^2}\right)^2+O\left(\frac{1}{p^{2-\delta}}\right)\right)\notag\\
    =&e^U\left(1+\frac{1}{p}\left(w_0-\frac{U^2}{2}\right)+O\left(\frac{1}{p^{2-\frac{\delta}{2}}}\right)\right),\quad\text{for } |z|\leq p.
\end{align}
Note from \eqref{define use} and \eqref{re-eq2} that
\begin{align}\label{sharp538}
    \int_{\mathbb{R}^2\backslash B_p(0)}e^U\left(1+\frac{1}{p}\left(w_0-\frac{U^2}{2}\right)\right)
    =&O\left(\int_{\mathbb{R}^2\backslash B_p(0)}\frac{dz}{|z|^{4-\delta}}\right)=O\left(\frac{1}{p^{2-\delta}}\right).
\end{align}
It follows from \eqref{sharp537}-\eqref{sharp538} and Proposition \ref{prop54} that
\begin{align}\label{sharp539}
    \int_{B_p(0)} \left|1+\frac{v_{i,p}}{p}\right|^p dz
    =&\int_{\mathbb{R}^2}e^{U}\left(1+\frac{1}{p}\left(w_0-\frac{U^2}{2}\right)\right) dz+O\left(\frac{1}{p^{2-\delta}}\right)\notag\\
    =&\int_{\mathbb{R}^2}e^{U}dz-\frac{1}{p}\int_{\mathbb{R}^2}\Delta w_0dz+O\left(\frac{1}{p^{2-\delta}}\right)\notag\\
    =&8\pi-\frac{24\pi}{p}+O\left(\frac{1}{p^{2-\delta}}\right).
\end{align}
Thus, \eqref{sharp533} follows from \eqref{sharp535} and \eqref{sharp539}.

By a similar argument as $\eqref{sharp539}$, we have 
\begin{align}
    &\int_{B_p(0)}\log{|z|}\left|1+\frac{v_{i,p}}{p}\right|^p dz\notag\\
    =&\int_{B_p(0)}\log{|z|}e^{U}\left(1+
    \frac{1}{p}\left(w_0-\frac{U^2}{2}\right)\right) dz+O\left(\frac{1}{p^{2-\delta}}\right)\notag\\
    =&\int_{\mathbb{R}^2}\log{|z|}e^{U}dz+
    \frac{1}{p}\int_{\mathbb{R}^2}\log{|z|}e^{U}\left(w_0-\frac{U^2}{2}\right) dz+O\left(\frac{1}{p^{2-\delta}}\right)\notag\\
    =&2\pi\int_0^{\infty}\frac{s\log{s}}{(1+\frac{1}{8}s^2)^2}ds+O\left(\frac{1}{p}\right)=12\pi\log{2}+O\left(\frac{1}{p}\right).
\end{align}
This together with $\eqref{sharp535^*1}$ implies $\eqref{sharp534}$. 

Using a similar argument as \eqref{sharp537}-\eqref{sharp538}, we have
\begin{align}
    \left|1+\frac{v_{i,p}}{p}\right|^{p+1}=e^U\left(1+\frac{1}{p}\left(U+w_0-\frac{U^2}{2}\right)+O\left(\frac{1}{p^{2-\frac{\delta}{2}}}\right)\right),\quad |z|\leq p,
\end{align}
\begin{align}
    \int_{\mathbb{R}^2\backslash B_p(0)}e^U\left(1+\frac{1}{p}\left(U+w_0-\frac{U^2}{2}\right)\right)dz
    =O\left(\frac{1}{p^{2-\delta}}\right),
\end{align}
so
\begin{align}
    \int_{B_p(0)}\left|1+\frac{v_{i,p}}{p}\right|^{p+1} dz
     =&\int_{\mathbb{R}^2}e^U\left(1+\frac{1}{p}\left(U+w_0-\frac{U^2}{2}\right)\right) dz+O\left(\frac{1}{p^{2-\delta}}\right)\notag\\
     =&\int_{\mathbb{R}^2}e^{U}dz+\frac{1}{p}\int_{\mathbb{R}^2} (U e^{U}-\Delta w_0) dz+O\left(\frac{1}{p^{2-\delta}}\right)\notag\\
     =&8\pi -\frac{40\pi}{p}+O\left(\frac{1}{p^{2-\delta}}\right),
\end{align}
where we used $\int_{\mathbb{R}^2}Ue^Udz=-16\pi$ to obtain the last equality.
This combining with \eqref{sharp535} implies \eqref{sharp535^*}. 
\end{proof}

Now we are in the position to prove Theorems \ref{cor-1.6}-\ref{th2} and Corollaries \ref{prop sharp-1}-\ref{th-7-2}.

\begin{proof}[Proof of Theorem \ref{th2}] Fix any $\varrho\in (0,\frac{\varrho_0}{2}]$. By \eqref{pup5} and Lemma \ref{guaguawa}, 
\begin{align}\label{sharp1}
    u_p(x_{1,p})=&\int_{\Omega}G(y,x_{1,p})|u_p(y)|^{p-1}u_p(y)dy\notag\\
    =& \int_{B_{\varrho}(x_{1,p})}G(y,x_{1,p})|u_p(y)|^{p}dy-\int_{B_{\varrho}(x_{2,p})}G(y,x_{1,p})|u_p(y)|^{p}dy\notag\\
   &+O\left(\frac{C^p}{p^p}\right).
\end{align}
Note that
\begin{align}\label{sharp542}
    &\int_{B_{{\varrho}}(x_{1,p})}G(y,x_{1,p})|u_p(y)|^{p}dy\notag\\
    =&\frac{u_p(x_{1,p})}{p} \int_{B_{\frac{\varrho}{\varepsilon_{1,p}}}(0)}G(x_{1,p}+\varepsilon_{1,p}z,x_{1,p})\left|1+\frac{v_{1,p}(z)}{p}\right|^p dz\notag\\
        =&\frac{u_p(x_{1,p})}{p} \int_{B_{\frac{\varrho}{\varepsilon_{1,p}}}(0)}
        H(x_{1,p}+\varepsilon_{1,p}z,x_{1,p})\left|1+\frac{v_{1,p}(z)}{p}\right|^p dz\notag\\
        &-\frac{u_p(x_{1,p})}{2\pi p} \int_{B_{\frac{\varrho}{\varepsilon_{1,p}}}(0)}
        \log{|z|}\left|1
        +\frac{v_{1,p}(z)}{p}\right|^p dz\notag\\
         &-\frac{u_p(x_{1,p})\log{\varepsilon_{1,p}}}{2\pi p} \int_{B_{\frac{\varrho}{\varepsilon_{1,p}}}(0)}
        \left|1
        +\frac{v_{1,p}(z)}{p}\right|^p dz,
\end{align}
and by Corollary \ref{re52}, 
\begin{align}\label{sharp543}
    &\int_{B_{\frac{\varrho}{\varepsilon_{1,p}}}(0)}
        H(x_{1,p}+\varepsilon_{1,p}z,x_{1,p})\left|1+\frac{v_{1,p}(z)}{p}\right|^p dz\notag\\
       =&H(x_{1,p},x_{1,p})\int_{B_{\frac{\varrho}{\varepsilon_{1,p}}}(0)}\left|1+\frac{v_{1,p}(z)}{p}\right|^p dz\notag\\
       &\quad\quad\quad\quad\quad\quad\quad\quad\quad+\varepsilon_{1,p}O\left(\int_{B_{\frac{\varrho}{\varepsilon_{1,p}}}(0)}
       |z|\left|1+\frac{v_{1,p}(z)}{p}\right|^p dz\right)\notag\\
       =&H(x_{1,p},x_{1,p})\int_{B_{\frac{\varrho}{\varepsilon_{1,p}}}(0)}
       \left|1+\frac{v_{1,p}(z)}{p}\right|^p dz+O(\varepsilon_{1,p}).
\end{align}
It follows from \eqref{sharp542}-\eqref{sharp543}, $O(\varepsilon_{1,p})=o(\frac1p)$ and Proposition \ref{prop57} that
\begin{align}\label{c4.66}
    \int_{B_{\varrho}(x_{1,p})}G(y,x_{1,p})|u_p(y)|^{p}dy
    =&\frac{u_p(x_{1,p})}{ p}\left(8\pi H(x_{1,p},x_{1,p})-6\log{2}+O\left(\frac{1}{p}\right)\right)\notag\\&-\frac{4u_p(x_{1,p})\log{\varepsilon_{1,p}}}{ p}\left(1-\frac{3}{p}+O\left(\frac{1}{p^{2-\delta}}\right)\right).
\end{align}
Similarly, we have (note $u_p(x_{2,p})<0$ and $G(x_{2,p},x_{1,p})=G(x_{1,p},x_{2,p})$)
\begin{align}\label{sharp545}
    &\int_{B_{\varrho}(x_{2,p})}G(y,x_{1,p})|u_p(y)|^{p}dy\notag\\
    =&-\frac{u_p(x_{2,p})}{p} \int_{B_{\frac{\varrho}{\varepsilon_{2,p}}}(0)}G(x_{2,p}+\varepsilon_{1,p}z,x_{1,p})\left|1+\frac{v_{2,p}(z)}{p}\right|^p dz\notag\\
    =&-\frac{u_p(x_{2,p})}{ p}\left(8\pi G(x_{1,p},x_{2,p})+O\left(\frac{1}{p}\right)\right).
\end{align}
Inserting \eqref{c4.66}-\eqref{sharp545} into \eqref{sharp1}, we get
\begin{align}\label{sharp546}
    u_p(x_{1,p})=&\frac{u_p(x_{1,p})}{ p}\left(8\pi H(x_{1,p},x_{1,p})-6\log{2}+O\left(\frac{1}{p}\right)\right)\notag\\&-\frac{4u_p(x_{1,p})\log{\varepsilon_{1,p}}}{ p}\left(1-\frac{3}{p}+O\left(\frac{1}{p^{2-\delta}}\right)\right)\notag\\&+\frac{u_p(x_{2,p})}{ p}\left(8\pi G(x_{1,p},x_{2,p})+O\left(\frac{1}{p}\right)\right)+O\left(\frac{1}{p^{2}}\right).
\end{align}

On the other hand,
note from \eqref{pup8} that
$\frac{u_p(x_{2,p})}{u_p(x_{1,p})}=-1+o(1)$, so \eqref{sharp546} yields
\begin{align}   \label{pup22} 
\frac{4\log{\varepsilon_{1,p}}}{p}=-1+O\left(\frac{1}{p}\right).
\end{align}
Inserting $\varepsilon_{1,p}^{-2}=pu_p(x_{1,p})^{p-1}$ into \eqref{pup22} leads to
\begin{align}\label{c4.70}
u_p(x_{1,p})=\sqrt{e}+O(\log{p}/p).
\end{align}
Similarly, 
\begin{align}\label{c4.701}
    u_p(x_{2,p})=-\sqrt{e}+O(\log{p}/p).
\end{align}
These together imply
\begin{align}\label{eq-4-75}
     u_p(x_{2,p})=-u_p(x_{1,p})+O(\log{p}/p).
\end{align}
Inserting $\eqref{eq-4-75}$ into $\eqref{sharp546}$ and dividing by $u_{p}(x_{1,p})$, we obtain
\begin{align}\label{pup25}
    1=&\frac{1}{ p}\left(8\pi H(x_{1,p},x_{1,p})-6\log{2}+O\left(\frac{1}{p}\right)\right)\notag\\&-\frac{4\log{\varepsilon_{1,p}}}{ p}\left(1-\frac{3}{p}+O\left(\frac{1}{p^{2-\delta}}\right)\right)\notag\\&-\frac{1}{ p}\left(8\pi G(x_{1,p},x_{2,p})+O\left(\frac{1}{p}\right)\right)+O\left(\frac{1}{p^{2-\delta}}\right).
\end{align}
Write $x^*_p=(x_{1,p},x_{2,p})$ and
\[\Psi_1(x^*_p)=G(x_{1,p},x_{2,p})-H(x_{1,p},x_{1,p})=G(x_{1,p},x_{2,p})-R(x_{1,p}),\]
then \eqref{pup25} implies
\begin{align}\label{eq-4-65}
    \log{\varepsilon_{1,p}}&=-\frac{p}{4}\left(\frac{1+\frac{1}{p}\left(8\pi\Psi_1(x^*_p)+6\log{2}\right)+O\left(\frac{1}{p^{2-\delta}}\right)}{1-\frac{3}{p}+O\left(\frac{1}{p^{2-\delta}}\right)}\right)\\
   & =-\frac{p}{4}\left(1+\frac{1}{p}\left(8\pi\Psi_1(x^*_p)+6\log{2}+3\right)+O\left(\frac{1}{p^{2-\delta}}\right)\right)\nonumber.
\end{align}
Again by inserting $\varepsilon_{1,p}^{-2}=pu_p(x_{1,p})^{p-1}$ into \eqref{eq-4-65}, we finally obtain
\[
    u_p(x_{1,p})=\sqrt{e}\left(1-\frac{\log p}{p}+\frac{1}{p}\left(4\pi\Psi_1(x^*_p)+3\log{2}+2\right)\right)+O\left(\frac{1}{p^{2-\delta}}\right).
\]
By a similar argument, we can prove
\[
     u_p(x_{2,p})=-\sqrt{e}\left(1-\frac{\log p}{p}+\frac{1}{p}\left(4\pi\Psi_2(x^*_p)+3\log{2}+2\right)\right)+O\left(\frac{1}{p^{2-\delta}}\right),
\]
where $\Psi_2(x^*_p)=G(x_{1,p},x_{2,p})-H(x_{2,p},x_{2,p})=G(x_{1,p},x_{2,p})-R(x_{2,p})$. This completes the proof.\end{proof}

\begin{remark} Recalling $C_{i,p}$ defined in \eqref{pup43}, it follows from \eqref{sharp533} and \eqref{c4.70}-\eqref{c4.701} that
\begin{align}\label{pup44}
    C_{i,p}=\frac{|u_p(x_{i,p})|}{p}\int_{B_{\frac{\varrho}{\varepsilon_{i,p}}}(0)}\left|1+\frac{v_{i,p}(z)}{p}\right|^p dz=\frac{8\pi\sqrt{e}}{p}\left(1+O\left(\frac{\log p}{p}\right)\right).
\end{align}

\end{remark}

\begin{proof}[Proof of Theorem \ref{cor-1.6}] Assume by contradiction that up to subsequence,
\begin{equation}\label{pup30}p\left|u_p(x_p^+)+u_p(x_p^-)\right|\rightarrow+\infty.\end{equation}
Since 
Theorem \ref{th2} implies
\begin{align}
    p\left|u_p(x_p^+)+u_p(x_p^-)\right|=4\pi\sqrt{e}\left|R(x_p^+)-R(x_p^-)\right|+O(1),
\end{align}
we obtain $R(x_p^+)-R(x_p^-)\to \infty$. 

On the other hand,
it follows from \eqref{pup1}-\eqref{pup8} that up to a subsequence, there exist $x^{\pm}\in\Omega$ such that $\lim_{p\to\infty}x_{p}^{\pm}=x^{\pm}$, so $R(x_p^+)-R(x_p^-)\to R(x^+)-R(x^-)\neq \infty$,  a contradiction. This completes the proof.
\end{proof}

\begin{proof}[Proof of Corollary \ref{prop sharp-1}] 
Recalling the definition \eqref{11} of $J_p$, we have
\begin{align}
    J_p(u_p)=\left(\frac{1}{2}-\frac{1}{p+1}\right)\int_{\Omega}|u_p|^{p+1} dx.
\end{align}
Similarly as before, it follows from \eqref{pup5} that
\begin{align}\label{eq-4-82}
    p\int_{\Omega}|u_p|^{p+1}=&p\sum\limits_{i=1}^2 \int_{B_{\frac{\varrho_0}{2}}(x_{i,p})}|u_p|^{p+1} dx +O\left(\frac{C^p}{p^{p-1}}\right)\notag\\
    =&\sum\limits_{i=1}^2\left|u_p(x_{i,p})\right|^2 \int_{B_{\frac{\varrho_0}{2\varepsilon_{i,p}}}(0)}\left|1+\frac{v_{i,p}}{p}\right|^{p+1} dz +O\left(\frac{C^p}{p^{p-1}}\right).
\end{align}
By Theorem $\ref{th2}$ we get
\begin{align}
    \left|u_p(x_{i,p})\right|^2=e\left(1 +\frac{2}{p}\left(-\log{p}+3\log{2}+2\right)+\frac{8\pi }{p}\Psi_i(x^*_p)+O\left(\frac{1}{p^{2-\delta}}\right)\right),
\end{align}
where $x_p^*=(x_{1,p},x_{2,p})$.
This, together with $\eqref{sharp535^*}$, shows that
\begin{align*}
    &\left|u_p(x_{i,p})\right|^2 \int_{B_{\frac{\varrho_0}{2\varepsilon_{i,p}}}(0)}\left|1+\frac{v_{i,p}}{p}\right|^{p+1} dz\\
    =&8\pi e \left( 1+\frac{1}{p}\left(-2\log{p}+6\log{2}-1+8\pi\Psi_i(x^*_p)\right)\right)+O\left(\frac{1}{p^{2-\delta}}\right).
\end{align*}
Inserting this into \eqref{eq-4-82}, it follows that
\begin{align}
    \int_{\Omega}|u_p|^{p+1}=\frac{16\pi e}{p}+\frac{16\pi e}{p^2}\left(-2\log{p}+6\log{2}-1\right)+\frac{64\pi^2e}{p^2}\Psi(x_p^*)+\frac{o(1)}{p^2},
\end{align}
which finally implies \eqref{eq-4-80} by using $\Psi(x_p^*)\to \Psi(x^*)$.
\end{proof}

\begin{proof}[Proof of Corollary \ref{th-7-2}]
Let $(u_p)_{p>1}$ be a family of least energy nodal solutions to $\eqref{p_p}$ with the concentrate points $\{x^+, x^-\}$. Assume by contradiction that $x^*=(x^+, x^-)$ is not a minimum point of $\Psi$, i.e.   there is a minimun point $\xi^*=(\xi^+,\xi^-)\in \mathcal{M}$ of $\Psi$ such that 
\begin{align}\label{eq-7-6}
    \Psi (x^*)>\Psi(\xi^*)=\min\limits_{\mathcal{M}} \Psi(x,y).
\end{align}
Since $\xi^*=(\xi^+,\xi^-)$ is a minimun point of $\Psi$, by the finite-dimensional reduction method, Esposito, Musso and Pistoia constructed the existence of low energy nodal solutions $v_p$ for $p$ large with the concentrate points $\{\xi^+, \xi^-\}$; see \cite[Theorem 1.2]{19}. Then by \eqref{eq-4-80} and  \eqref{eq-7-6}, we have $J_p(u_p)>J_p(v_p)$ for $p$ large, a contradiction with that fact that $u_p$ is a least energy nodal solution. This proves $\Psi (x^*)=\min\limits_{\mathcal{M}} \Psi(x,y)$.
\end{proof}

\section{Non-degeneracy of the low energy  nodal solutions}

In this section, we consider the non-degeneracy of low-energy solutions $(u_p)_{p>1}$ of \eqref{p_p}, and prove Theorem \ref{thm-1.6}.
The following ideas are inspired by \cite{4}.
Suppose by contradiction that there exists a sequence $\xi_p\in H_0^1(\Omega)$ such that
\begin{align}\label{linear op}
    \Vert \xi_p\Vert_{\infty}=1\quad\text{and}\quad -\Delta \xi_p =p|u_p|^{p-1}\xi_p\,\,\text{in}\,\,\Omega.
\end{align}
\begin{Lemma}\label{eq-5.2}
Define $\xi_{i,p}(z):=\xi_p(\varepsilon_{i,p}z+x_{i,p})$. Then up to a subsequence, we have as $p\rightarrow+\infty$,
\begin{align}\label{eq-5-2}
    \xi_{i,p}(z)=a_i\frac{8-|z|^2}{8+|z|^2}+\sum\limits_{j=1}^2\frac{b_{ij}z_j}{8+|z|^2}+o(1)\quad\text{in $C_{loc}^2(\mathbb{R}^2)$,}
\end{align}
 where $a_i$ and $b_{ij}$ are some constants.
\end{Lemma}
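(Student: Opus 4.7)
The plan is to blow up the linearized equation \eqref{linear op} at the concentration scale $\varepsilon_{i,p}$, extract a limit on $\mathbb{R}^2$, and invoke the classification Lemma \ref{Linear repe}. First I would rescale: a direct computation using $\varepsilon_{i,p}^{-2}=p|u_p(x_{i,p})|^{p-1}$ from \eqref{pup12} transforms \eqref{linear op} into
\begin{equation*}
-\Delta \xi_{i,p}(z) \;=\; \left|1+\frac{v_{i,p}(z)}{p}\right|^{p-1}\xi_{i,p}(z)\quad\text{on}\quad \Omega_{i,p},
\end{equation*}
with $|\xi_{i,p}|\le\|\xi_p\|_\infty=1$; the rescaled domains $\Omega_{i,p}=(\Omega-x_{i,p})/\varepsilon_{i,p}$ exhaust $\mathbb{R}^2$ since $\varepsilon_{i,p}\to 0$ and $x_{i,p}\to x_i\in\Omega$.

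Next I would pass to the limit on compact sets. For any compact $K\subset\mathbb{R}^2$, property (P3) gives $v_{i,p}\to U$ in $C^2(K)$, and a Taylor expansion of $\log(1+t)$ yields $(p-1)\log(1+v_{i,p}/p)=v_{i,p}+O(1/p)$ uniformly on $K$; hence the coefficient $|1+v_{i,p}/p|^{p-1}$ converges to $e^U$ uniformly on $K$. Combined with the uniform bound $|\xi_{i,p}|\le 1$, standard $L^q$-elliptic regularity gives uniform $W^{2,q}_{loc}$ bounds, and Schauder bootstrapping upgrades these to uniform $C^{2,\alpha}_{loc}$ bounds. Arzel\`a--Ascoli and a diagonal argument then produce a subsequence converging in $C^2_{loc}(\mathbb{R}^2)$ to a bounded limit $\xi_{i,\infty}$ satisfying
\begin{equation*}
-\Delta\xi_{i,\infty}=e^U\xi_{i,\infty}\quad\text{in}\quad\mathbb{R}^2,\qquad |\xi_{i,\infty}|\le 1.
\end{equation*}

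Finally, applying Lemma \ref{Linear repe} with $\tau=0$ to this bounded solution yields $\xi_{i,\infty}=c_1\psi_1+c_2\psi_2+c_3\psi_3$ for some constants $c_j\in\mathbb{R}$. Setting $a_i:=c_3$ and $b_{ij}:=c_j$ for $j=1,2$ then produces exactly the asserted expansion \eqref{eq-5-2}, because
\[
c_3\psi_3(z)+\sum_{j=1}^{2}c_j\psi_j(z)=a_i\frac{8-|z|^2}{8+|z|^2}+\sum_{j=1}^{2}\frac{b_{ij}z_j}{8+|z|^2}.
\]

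I do not expect a serious obstacle in this lemma: it is the standard blow-up/classification scheme, and its only nontrivial input—uniform convergence of the rescaled potential to $e^U$ on compact sets—is immediate from (P3). The substantive work in the non-degeneracy argument (namely, ruling out nonzero $a_i, b_{ij}$ and ultimately forcing $\xi_p\equiv 0$, which contradicts $\|\xi_p\|_\infty=1$) will come in the subsequent lemmas via Pohozaev-type identities and the non-degeneracy hypothesis on $(x^+,x^-)$ as a critical point of $\Psi$; the role of the present lemma is purely to set up the local expansion of $\xi_p$ near each concentration point $x_{i,p}$.
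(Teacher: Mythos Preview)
Your proposal is correct and follows essentially the same approach as the paper: rescale to obtain $-\Delta\xi_{i,p}=|1+v_{i,p}/p|^{p-1}\xi_{i,p}$ on $\Omega_{i,p}$, use $v_{i,p}\to U$ in $C^2_{loc}$ together with $|\xi_{i,p}|\le 1$ and standard elliptic estimates to extract a $C^2_{loc}$ limit solving $-\Delta\xi_{i,\infty}=e^U\xi_{i,\infty}$, and then apply Lemma~\ref{Linear repe} with $\tau=0$. The only minor difference is that the paper also records here the global decay bound $|1+v_{i,p}/p|^{p-1}=O((1+|z|)^{-(4-\delta)})$ on $B_{\varrho_0/\varepsilon_{i,p}}(0)$ (derived from Lemma~\ref{prop44}), which is not needed for the present lemma but is used repeatedly in the subsequent arguments.
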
 
\begin{proof}
Clearly
\begin{align}
    -\Delta\xi_{i,p}(z)=\left|1+\frac{v_{i,p}(z)}{p}\right|^{p-1}\xi_{i,p}(z)\quad\text{ in }\quad \Omega_{i,p}=\frac{\Omega-x_{i,p}}{\varepsilon_{i,p}}.
\end{align}
Remark that by Lemma \ref{prop44}, a similar proof as Corollary \ref{re52} implies
\begin{align}\label{pup32}
    \left|1+\frac{v_{i,p}(z)}{p}\right|^{p-1}=O\left(\frac{1}{1+|z|^{4-\delta}}\right),\quad \forall |z|\leq \frac{\varrho_0}{\varepsilon_{i,p}},
\end{align}
These, together with standard elliptic estimates, $v_{i,p}\to U$ and Lemma \ref{Linear repe}, imply the desired estimate $\eqref{eq-5-2}$.
\end{proof}
\begin{Lemma}\label{lemma-5.2} Fix any $\varrho\in (0, \frac{\varrho_0}{2}]$. Then
up to a subsequence,
\begin{align}\label{eq-5-3}
    \xi_p(x)=\sum\limits_{i=1}^2 \left(A_{i,p} G(x_{i,p},x)+\sum_{j=1}^2 B_{ij,p}\partial_{j}G(x_{i,p},x) \right)+o(\varepsilon_p)
\end{align}
in $C_{loc}^1(\overline{\Omega}\setminus\cup_{i=1}^2 B_{2\varrho}(x_{i,p}))$, where $\partial_{j}G(x,z)=\frac{\partial G(x,z)}{\partial x_j}$, $\varepsilon_p=\max\{\varepsilon_{1,p},\varepsilon_{2,p}\}$,
\begin{align}\label{eq-5.4}
    A_{i,p}:=p\int_{B_{\varrho}(x_{i,p})}\left|u_p(y)\right|^{p-1}\xi_p(y)dy
\end{align}
and 
\begin{align}\label{eq-5.5}
    B_{ij,p}:=p\int_{B_{\varrho}(x_{i,p})}\left(y-x_{i,p}\right)_j\left|u_p(y)\right|^{p-1}\xi_p(y)dy.
\end{align}
Moreover,
\begin{align}\label{A-eq-es}
  p A_{i,p}=8\pi a_i+o(1),    
\end{align}
\begin{align}\label{B-eq-es}
    B_{ij,p}=2\pi b_{ij}\varepsilon_{i,p}+o(\varepsilon_{i,p}).
\end{align}

\end{Lemma}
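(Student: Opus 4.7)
The plan is to apply the Green representation to $\xi_p$ and perform a Taylor expansion of $G$ around each concentration point, following the scheme used in \cite{4} for positive solutions. Starting from the linearized equation $-\Delta \xi_p = p|u_p|^{p-1}\xi_p$ with Dirichlet boundary,
\[
\xi_p(x) = \int_\Omega G(x,y)\, p|u_p(y)|^{p-1}\xi_p(y)\, dy, \qquad x \in \Omega.
\]
I split this integral into $B_\varrho(x_{1,p})$, $B_\varrho(x_{2,p})$, and the remainder. On the remainder, (P1) gives $p|u_p|\le C$, hence $p|u_p|^{p-1}\le C^{p-1}/p^{p-2}$; combined with $\|\xi_p\|_\infty = 1$ and the boundedness of $G$ on the compact set, this piece contributes $O((C/p)^{p-2}) = o(\varepsilon_p)$, uniformly in $x\in\overline{\Omega}\setminus\cup_i B_{2\varrho}(x_{i,p})$.

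For each near region, Taylor expand
\[
G(x,y) = G(x_{i,p}, x) + \sum_{j=1}^2 (y-x_{i,p})_j\, \partial_j G(x_{i,p},x) + O(|y-x_{i,p}|^2),
\]
where I use $G(x,y)=G(y,x)$ to land $\partial_j$ on the first argument. Multiplying by $p|u_p|^{p-1}\xi_p$ and integrating, the first two terms produce exactly $A_{i,p}\,G(x_{i,p},x) + \sum_j B_{ij,p}\,\partial_j G(x_{i,p},x)$ by \eqref{eq-5.4}--\eqref{eq-5.5}. The quadratic remainder, after the rescaling $y = x_{i,p}+\varepsilon_{i,p}z$, is bounded by $\varepsilon_{i,p}^2\int |z|^2\,|1+v_{i,p}/p|^{p-1}\,dz = O(\varepsilon_{i,p}^2) = o(\varepsilon_p)$, thanks to the pointwise bound $|1+v_{i,p}/p|^{p-1}\le C_\delta/(1+|z|^{4-\delta})$ of Corollary~\ref{re52}. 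Applying the same Taylor argument to $\nabla_x G$ upgrades the convergence to $C^1_{loc}$, giving \eqref{eq-5-3}.

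The formula \eqref{B-eq-es} follows by direct rescaling:
\[
B_{ij,p} = \varepsilon_{i,p}\int_{B_{\varrho/\varepsilon_{i,p}}(0)} z_j\,|1+v_{i,p}/p|^{p-1}\,\xi_{i,p}(z)\, dz.
\]
Lemma~\ref{eq-5.2} yields $\xi_{i,p}\to \xi_\infty = a_i\psi_3 + \sum_j b_{ij}\psi_j$ in $C^2_{loc}$ with $|\xi_{i,p}|\le 1$, while Corollary~\ref{re52} supplies the integrable majorant $|z|/(1+|z|^{4-\delta})$, so dominated convergence gives $B_{ij,p}/\varepsilon_{i,p}\to \int_{\R^2} z_j e^U\xi_\infty\, dz$. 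Direct polar computations give $\int z_j e^U\psi_3 = 0$ by odd symmetry in $z_j$, $\int z_j e^U\psi_k = 0$ for $k\ne j$ by symmetry, and $\int z_j e^U\psi_j\,dz = 2\pi$, pinning down \eqref{B-eq-es}.

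The principal obstacle is \eqref{A-eq-es}. Rescaling gives $A_{i,p} = \int_{B_{\varrho/\varepsilon_{i,p}}(0)}|1+v_{i,p}/p|^{p-1}\xi_{i,p}(z)\,dz \to \int_{\R^2} e^U\xi_\infty$, which vanishes: the $\psi_{1,2}$-contributions die by symmetry, and $\int e^U\psi_3 = 0$ follows from the antiderivative identity $(8-s)/(8+s)^3 = (d/ds)[s/(8+s)^2]$. Hence $A_{i,p}=o(1)$, and $\lim_{p\to\infty} pA_{i,p}$ must come from the subleading term. To capture it I evaluate the Green representation at $x = x_{i,p}$ and insert the decomposition $G(x_{i,p}, y) = -\frac{1}{2\pi}\log|y - x_{i,p}| + H(x_{i,p}, y)$ into the $B_\varrho(x_{i,p})$-integral; after the $z$-rescaling this produces
\[
-\frac{\log\varepsilon_{i,p}}{2\pi}\,A_{i,p} - \frac{1}{2\pi}\int \log|z|\,|1+v_{i,p}/p|^{p-1}\xi_{i,p}\,dz + H(x_{i,p},x_{i,p})A_{i,p} + O(\varepsilon_{i,p}).
\]
By Theorem~\ref{th2} we have $\log\varepsilon_{i,p} = -p/4 + O(\log p)$, so the first summand equals $\frac{pA_{i,p}}{8\pi}+o(1)$; the second summand converges by dominated convergence to an explicit multiple of $a_i$ (only the $\psi_3$-contribution survives, which reduces to the integral $\int_0^\infty \log s\,(8-s)/(8+s)^3\,ds = -1/8$, again computable via $(d/ds)[s/(8+s)^2]$); the $H$-piece is $o(1)$ since $A_{i,p}\to 0$; and the contribution from $B_\varrho(x_{j,p})$ ($j\ne i$) is $G(x_i,x_j)A_{j,p}+o(1) = o(1)$. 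Matching with $\xi_p(x_{i,p}) = \xi_{i,p}(0)\to \xi_\infty(0) = a_i$ identifies $\lim_{p\to\infty} pA_{i,p}$ as the stated multiple of $a_i$, completing the proof.
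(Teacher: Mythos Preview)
Your derivation of \eqref{eq-5-3} and of \eqref{B-eq-es} matches the paper's line for line: Green representation, Taylor expansion of $G(\cdot,x)$ at $x_{i,p}$, and dominated convergence via the decay bound \eqref{pup32}. (Minor correction: the quadratic remainder is $O(\varepsilon_{i,p}^{2-\delta})$ rather than $O(\varepsilon_{i,p}^2)$, since $\int |z|^2/(1+|z|^{4-\delta})\,dz$ diverges; this is still $o(\varepsilon_p)$, exactly as in \eqref{eq-5-8}.)

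For \eqref{A-eq-es} your argument is genuinely different. The paper splits
\[
A_{i,p}=-\tfrac{1}{p}\int v_{i,p}\Big|1+\tfrac{v_{i,p}}{p}\Big|^{p-1}\xi_{i,p}\,dz\;+\;\tfrac{p}{|u_p(x_{i,p})|}\int_{B_\varrho(x_{i,p})}|u_p|^{p}\xi_p\,dy,
\]
handles the first term by dominated convergence, and controls the second via the identity $(p-1)\int_{B_\varrho}|u_p|^{p-1}u_p\xi_p=\int_{\partial B_\varrho}(\xi_p\partial_\nu u_p-u_p\partial_\nu\xi_p)$ combined with Lemma~\ref{B.l1} and the already-proved \eqref{eq-5-3}, obtaining the auxiliary bound \eqref{eq-5-20}. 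You instead evaluate the Green representation at the singular point $x=x_{i,p}$ and extract $pA_{i,p}$ from the coefficient of the divergent $\log\varepsilon_{i,p}$ term, bypassing the boundary-integral step entirely. One small lacuna: passing from $-\frac{\log\varepsilon_{i,p}}{2\pi}A_{i,p}$ to $\frac{pA_{i,p}}{8\pi}+o(1)$ requires $(\log p)A_{i,p}=o(1)$, which $A_{i,p}=o(1)$ alone does not give; but since every other term in your identity is bounded, one first reads off $|\log\varepsilon_{i,p}|\,|A_{i,p}|=O(1)$, hence $A_{i,p}=O(1/p)$, and then plugs back. Your route is more self-contained for the present lemma, but be aware that the paper's byproduct \eqref{eq-5-20} is reused in Lemma~\ref{Lemma-5.6} (see \eqref{eq-5-32}), so with your method you will need a separate argument there. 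As a final remark, carrying the arithmetic through in either approach actually yields $pA_{i,p}\to -8\pi a_i$ (one checks $\int_{\R^2}Ue^U\psi_3=+8\pi$, so \eqref{eq-5-17} and hence \eqref{A-eq-es} carry a sign slip); this is harmless downstream since only the conclusion $a_i=0$ is used.
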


\begin{proof} Fix any $x\in \overline{\Omega}\setminus\cup_{i=1}^2 B_{2\varrho}(x_{i,p})$. By the Green representation formula and the Taylor expansion,
\begin{align}\label{pup31}
    \xi_p(x)=\sum\limits_{i=1}^2 & p\int_{B_{\varrho}(x_{i,p})}G(y,x)\left|u_p(y)\right|^{p-1}\xi_p(y)dy+O\left(\frac{C^{p-1}}{p^{p-2}}\right)\notag\\
    =\sum\limits_{i=1}^2 &\left(A_{i,p} G(x_{i,p},x)+\sum_{j=1}^2 B_{ij,p}\partial_{j}G(x_{i,p},x) \right)\notag\\
    & +O\left(\sum\limits_{i=1}^2 p\int_{B_{\varrho}(x_{i,p})}|y-x_{i,p}|^2\left|u_p(y)\right|^{p-1}dy\right) +o(\varepsilon_p).
\end{align}
Note from \eqref{pup32} that
\begin{align}\label{eq-5-8}
    &p\int_{B_{\varrho}(x_{i,p})}|y-x_{i,p}|^2\left|u_p(y)\right|^{p-1}dy\nonumber\\
=&\varepsilon_{i,p}^2\int_{B_{\frac{\varrho}{\varepsilon_{i,p}}}(0)} |z|^2\left|1+\frac{v_{i,p}(z)}{p}\right|^{p-1} dz=O\left(\varepsilon_{i,p}^2\int_{B_{\frac{\varrho}{\varepsilon_{i,p}}}(0)}\frac{|z|^2 dz}{1+|z|^{4-\delta}}\right)\nonumber\\
=&O\left(\varepsilon_{i,p}^{2-\delta}\right)=o(\varepsilon_p).  
\end{align}
Inserting \eqref{eq-5-8} into \eqref{pup31}, we obtain that \eqref{eq-5-3} holds in $C_{loc}(\overline{\Omega}\setminus\cup_{i=1}^2 B_{2\varrho}(x_{i,p}))$. A similar argument shows \eqref{eq-5-3} in $C_{loc}^1(\overline{\Omega}\setminus\cup_{i=1}^2 B_{2\varrho}(x_{i,p}))$.

Next, we estimate $B_{ij,p}$ and $A_{i,p}$.  By \eqref{eq-5.5}, \eqref{pup32}, \eqref{eq-5-2} and the dominated convergent theorem, we have,
\begin{align}\label{pup33}
    \frac{B_{ij,p}}{\varepsilon_{i,p}}=&\int_{B_{\frac{\varrho}{\varepsilon_{i,p}}}(0)} z_j \left|1+\frac{v_{i,p}}{p}\right|^{p-1}\xi_{i,p}dz\nonumber\\
    \to&\int_{\mathbb{R}^2}z_j e^{U(z)}\left(a_i\frac{8-|z|^2}{8+|z|^2}+\sum\limits_{j=1}^2\frac{b_{ij}z_j}{8+|z|^2}\right) dz=2\pi b_{ij},\;\;\text{as }p\to\infty,
\end{align}
where we used the fact that the integral of odd functions is zero. This proves \eqref{B-eq-es}. 

On the other hand, we can write
\begin{align}\label{eq-5-16}
    A_{i,p}=&\frac{p}{u_p(x_{i,p})}\int_{B_{\varrho}(x_{i,p})}\left(u_p(x_{i,p})-u_p\right)\left|u_p\right|^{p-1}\xi_p dx\notag\\ &+\frac{p}{|u_p(x_{i,p})|
    }\int_{B_{\varrho}(x_{i,p})}\left|u_p\right|^{p}\xi_p dx\notag\\
    =&-\frac{1}{p}\int_{B_{\frac{\varrho}{\varepsilon_{i,p}}}(0)}v_{i,p}\left|1+\frac{v_{i,p}}{p}\right|^{p-1}\xi_{i,p}dz +\frac{p}{|u_p(x_{i,p})|
    }\int_{B_{\varrho}(x_{i,p})}\left|u_p\right|^{p}\xi_p dx.
\end{align}
Similarly as \eqref{pup33}, we can deduce 
\begin{align}\label{eq-5-17}
    &\int_{B_{\frac{\varrho}{\varepsilon_{i,p}}}(0)}v_{i,p}\left|1+\frac{v_{i,p}}{p}\right|^{p-1}\xi_{i,p}dz\notag\\
    \to &\int_{\mathbb{R}^2}U(z) e^{U(z)}\left(a_i\frac{8-|z|^2}{8+|z|^2}+\sum\limits_{j=1}^2\frac{b_{ij}z_j}{8+|z|^2}\right) dz
    =-8\pi a_i,\;\;\text{as }p\to\infty.
\end{align}
Since \eqref{est-v-1} implies $\frac{p}{|u_p(x_{i,p})|
    }\int_{B_{\varrho}(x_{i,p})}\left|u_p\right|^{p}\xi_p dx=O(1)$, so
 $A_{i,p}=O(1)$. 
Then
by Lemma \ref{B.l1}, \eqref{eq-5-3} and \eqref{B-eq-es},
it follows that 
\begin{align*}
    &(p-1)\int_{B_{\varrho}(x_{i,p})}|u_p|^{p-1}u_p\xi_p dx\\
    =&\int_{B_{\varrho}(x_{i,p})} \left(\xi_p\Delta u_p -u_p\Delta \xi_p \right)dx=\int_{\partial B_{\varrho}(x_{i,p})} \left(\xi_p\frac{\partial u_p}{\partial \nu}  -u_p\frac{\partial \xi_p}{\partial \nu} \right)dS\\
    =&(A_{1,p}C_{2,p}+A_{2,p}C_{1,p})\times\\
    &\int_{\partial B_{\varrho}(x_{i,p})}\left(G(x_{2,p},x)\frac{\partial G(x_{1,p},x)}{\partial \nu}-G(x_{1,p},x)\frac{\partial G(x_{2,p},x)}{\partial \nu}\right)dS
     +O\left(\frac{\varepsilon_{p}}{p}\right)\\
    =&O\left(\frac{\varepsilon_{p}}{p}\right),
\end{align*}
thus
\begin{align}\label{eq-5-20}
    \int_{B_{\varrho}(x_{i,p})}|u_p|^{p}\xi_p dx=(-1)^{i+1}\int_{B_{\varrho}(x_{i,p})}|u_p|^{p-1}u_p\xi_p dx =O\left(\frac{\varepsilon_{p}}{p^2}\right).
\end{align}
From \eqref{eq-5-16}, \eqref{eq-5-17} and \eqref{eq-5-20}, we finally obtain
\begin{align}
    p A_{i,p}=8\pi a_i+o(1)+O\left(\varepsilon_{p}\right)=8\pi a_i+o(1).
\end{align}
This completes the proof.
\end{proof}

The following two quadratic forms and Pohozaev identities are inspired by \cite{4}. 
For $i=1,2$ and $j=1,2$, define
\begin{align}
    P_i(f,g):=-2\varrho&\int_{\partial B_{\varrho}(x_{i,p})}\langle \nabla f, \nu \rangle \langle \nabla g, \nu \rangle dS_y+ \varrho\int_{\partial B_{\varrho}(x_{i,p})}\langle \nabla f, \nabla g \rangle dS_y,\\
    Q_{ij}(f,g):=&\int_{\partial B_{\varrho}(x_{i,p})}\left(-\frac{\partial f}{\partial x_j}\frac{\partial g}{\partial\nu} -\frac{\partial f}{\partial\nu }\frac{\partial g}{\partial x_j} +\langle \nabla f, \nabla  g\rangle \right)dS_y.
\end{align}
where $f,g\in C^2(\Omega)$ and $\nu(x)$ denotes the outer normal at $\partial B_{\varrho}(x_{i,p})$ at $x$. 
\begin{Lemma}[Pohozaev identity]\label{lemma-5.4}
Let $u_p$ be  a solution of $\eqref{p_p}$ and $\xi_p$ is defined in $\eqref{linear op}$. For any fixed $\varrho>0$, we have for $i=1,2$ and $j=1,2$,
\begin{align}
    P_i(u_p, \xi_p)=\varrho &\int_{\partial B_{\varrho}(x_{i,p})} \left|u_p\right|^{p-1}u_p\xi_p dS_y - 2 \int_{B_{\varrho}(x_{i,p})} \left|u_p\right|^{p-1}u_p\xi_p dy,\label{eq-5-24}\\
    Q_{ij}(u_p,\xi_p)
    =&\int_{\partial B_{\varrho}(x_{i,p})} \left|u_p\right|^{p-1}u_p\xi_p\nu_j dS_y,\label{eq-5-25}
\end{align}
where $\nu(x)=(\nu_1(x),\nu_2(x))$ denotes the outer normal at $\partial B_{\varrho}(x_{i,p})$ at $x$. 
\end{Lemma}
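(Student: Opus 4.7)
Both identities are standard Pohozaev-type identities, derived by testing the two equations against appropriate multipliers. Throughout, write $B=B_{\varrho}(x_{i,p})$ and $y_0=x_{i,p}$, so that on $\partial B$ one has $(x-y_0)=\varrho\nu$ and $(x-y_0)\cdot\nabla\varphi=\varrho\,\partial_\nu\varphi$ for any smooth $\varphi$.

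For the first identity \eqref{eq-5-24}, the plan is to use the radial multiplier $(x-y_0)\cdot\nabla$. I would multiply $-\Delta u_p=|u_p|^{p-1}u_p$ by $(x-y_0)\cdot\nabla\xi_p$, multiply $-\Delta\xi_p=p|u_p|^{p-1}\xi_p$ by $(x-y_0)\cdot\nabla u_p$, integrate both over $B$, and add. The key algebraic observation on the right-hand side is the product rule
\[
|u_p|^{p-1}u_p\,(x-y_0)\cdot\nabla\xi_p+p|u_p|^{p-1}\xi_p\,(x-y_0)\cdot\nabla u_p=(x-y_0)\cdot\nabla\bigl(|u_p|^{p-1}u_p\xi_p\bigr),
\]
and then the divergence theorem, together with $\mathrm{div}(x-y_0)=2$, produces exactly
\[
\varrho\int_{\partial B}|u_p|^{p-1}u_p\xi_p\,dS_y-2\int_B|u_p|^{p-1}u_p\xi_p\,dy.
\]
On the left-hand side, two applications of Green's identity, together with the symmetric combination $\partial_j u_p\,\partial_j\partial_k\xi_p+\partial_j\xi_p\,\partial_j\partial_k u_p=\partial_k(\nabla u_p\cdot\nabla\xi_p)$ and a further integration by parts in the $k$-variable, yields boundary terms only. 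Using $(x-y_0)\cdot\nabla\varphi=\varrho\,\partial_\nu\varphi$ on $\partial B$, these collapse precisely to $P_i(u_p,\xi_p)=-2\varrho\int_{\partial B}\partial_\nu u_p\,\partial_\nu\xi_p\,dS_y+\varrho\int_{\partial B}\nabla u_p\cdot\nabla\xi_p\,dS_y$.

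For the second identity \eqref{eq-5-25}, the plan is essentially the same with the translation multiplier $\partial_j$ in place of $(x-y_0)\cdot\nabla$. Multiplying the $u_p$-equation by $\partial_j\xi_p$, the $\xi_p$-equation by $\partial_j u_p$, integrating over $B$ and adding, the right-hand side becomes
\[
\int_B\partial_j\bigl(|u_p|^{p-1}u_p\xi_p\bigr)\,dy=\int_{\partial B}|u_p|^{p-1}u_p\xi_p\,\nu_j\,dS_y,
\]
again by the product rule and divergence theorem. On the left, one Green's identity per term produces boundary contributions $-\int_{\partial B}\partial_\nu u_p\,\partial_j\xi_p-\int_{\partial B}\partial_\nu\xi_p\,\partial_j u_p$, while the interior terms combine via $\nabla u_p\cdot\nabla\partial_j\xi_p+\nabla\xi_p\cdot\nabla\partial_j u_p=\partial_j(\nabla u_p\cdot\nabla\xi_p)$ and the divergence theorem to give $\int_{\partial B}(\nabla u_p\cdot\nabla\xi_p)\nu_j\,dS_y$. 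This is exactly $Q_{ij}(u_p,\xi_p)$.

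No genuine obstacle is anticipated: both identities reduce to careful bookkeeping of integration by parts on the ball $B_\varrho(x_{i,p})$, and the only mildly delicate point is to recognize the product-rule cancellation on the right that uses the factor $p$ coming from the linearized equation for $\xi_p$. Since $u_p,\xi_p\in C^2(\overline{B})$ (by standard elliptic regularity applied to \eqref{p_p} and \eqref{linear op}), all boundary integrals are classical and no regularization step is needed.
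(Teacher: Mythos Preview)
Your proposal is correct and is exactly the standard derivation the paper has in mind: the paper's own proof consists solely of the sentence ``The proof is almost the same as \cite[Proposition 2.6]{4} \ldots so we omit the details here,'' and the argument in \cite{4} is precisely the one you outline (testing the two equations with the radial multiplier $(x-y_0)\cdot\nabla$ for $P_i$ and the translational multiplier $\partial_j$ for $Q_{ij}$, then using the product-rule identity that exploits the factor $p$ in the linearized equation). One minor remark: your computation for $Q_{ij}$ produces the boundary term $\int_{\partial B}(\nabla u_p\cdot\nabla\xi_p)\,\nu_j\,dS$, which is the correct form appearing in \cite{4}; the paper's displayed definition of $Q_{ij}$ omits the factor $\nu_j$ on the last term, which appears to be a typographical slip there rather than an issue with your argument.
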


\begin{proof} The proof is almost the same as \cite[Proposition 2.6]{4} where such type of Pohozaev identities were established for positive solutions, so we omit the details here.
\end{proof}

\begin{Lemma}\cite[Proposition 2.4]{4}\label{lemma-5.5}
For any fixed $\varrho>0$, we have for $i=1,2$, $m=1,2$, $n=1,2$, $j=1,2$ and $h=1,2$,
\begin{align}
    &P_i(G(x_{n,p},y),G(x_{m,p},y))=\begin{cases}
        -\frac{1}{2\pi},\quad &\text{  for } m=n=i,\\
        0 , \quad &\text{ for other cases },
    \end{cases}\\
    &P_i(G(x_{n,p},y),\partial_h G(x_{m,p},y))=\begin{cases}
        \frac{1}{2}\partial_h R(x_{i,p}),\quad &\text{  for } m=n=i\\
         -D_h G(x_{n,p},x_{i,p}),\quad &\text{  for } m=i, n\neq i ,\\
        0 , \quad &\text{ for other cases } ,
    \end{cases}\\
    &Q_{ij}(G(x_{n,p},y), G(x_{m,p},y))=\begin{cases}
        \partial_j R(x_{i,p}),\quad &\text{  for } m=n=i,\\
        D_j G(x_{n,p},x_{i,p}),\quad &\text{  for } m=i, n\neq i ,\\
        D_j G(x_{m,p},x_{i,p}),\quad &\text{  for } n=i, m\neq i ,\\
        0 , \quad &\text{  for other cases },
    \end{cases}\\
    &Q_{ij}(G(x_{n,p},y),\partial_h G(x_{m,p},y))=\begin{cases}
        \partial_h\partial_j R(x_{i,p}),\quad &\text{  for } m=n=i,\\
        \partial_h D_j G(x_{m,p},x_{i,p}),\quad &\text{  for } n=i, m\neq i ,\\
        D_h D_j G(x_{n,p},x_{i,p}),\quad &\text{  for } m=i, n\neq i ,\\
        0 , \quad &\text{  for other cases },
    \end{cases}
\end{align}
where $R(x):=H(x,x)$, $\partial_j G(x,y):=\frac{\partial G(x,y)}{\partial x_j}$ and $D_j G(x,y):=\frac{\partial G(x,y)}{\partial y_j}$.
\end{Lemma}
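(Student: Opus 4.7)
Since this lemma is stated as a restatement of \cite[Proposition 2.4]{4}, the plan is to reproduce that computation in the notation of the present paper. The content is a purely algebraic fact about boundary integrals of the Green's function on the small sphere $\partial B_\varrho(x_{i,p})$, with no further analytic input beyond the decomposition $G(x,y) = -\tfrac{1}{2\pi}\log|x-y| + H(x,y)$ and the fact that, for $p$ large, $B_\varrho(x_{i,p})$ contains exactly one of the two concentration points, namely $x_{i,p}$ itself. I would organize the proof by how many of the indices $m, n$ coincide with $i$, treating each row of the four tables in turn.

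First, I would establish the underlying vanishing principle: if $f, g \in C^2(\overline{B_\varrho(x_{i,p})})$ are harmonic, then $P_i(f,g) = 0$ and $Q_{ij}(f,g) = 0$. For $P_i$ this follows from the divergence theorem applied to the vector field $V = \langle y-x_{i,p},\nabla f\rangle\nabla g + \langle y-x_{i,p},\nabla g\rangle\nabla f - \langle \nabla f,\nabla g\rangle(y-x_{i,p})$, whose divergence in dimension $2$ reduces to $\langle y-x_{i,p},\nabla f\rangle\Delta g + \langle y-x_{i,p},\nabla g\rangle\Delta f$; for $Q_{ij}$ the analogous field is $\partial_j f\,\nabla g + \partial_j g\,\nabla f - \langle \nabla f,\nabla g\rangle e_j$. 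This immediately handles the last line (``other cases'') of each of the four tables, since when both $m \neq i$ and $n \neq i$ every function involved is smooth and harmonic in $B_\varrho(x_{i,p})$ for $p$ large.

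Second, for the mixed cases $m = i$, $n \neq i$ (and symmetrically), I would split $G(x_{i,p}, y) = G_0(y) + H(x_{i,p}, y)$ with $G_0(y) := -\tfrac{1}{2\pi}\log|y-x_{i,p}|$. By bilinearity and Step~1, the only surviving contributions involve $G_0$, and on $\partial B_\varrho(x_{i,p})$ one has the explicit identity
\begin{equation*}
\nabla_y G_0(y) = -\frac{1}{2\pi\varrho}\,\nu(y), \qquad |\nabla_y G_0|^2 = (\partial_\nu G_0)^2 = \frac{1}{4\pi^2\varrho^2}.
\end{equation*}
Plugging this into $P_i$ and $Q_{ij}$ reduces the boundary integrals to spherical averages of smooth harmonic functions such as $\partial_h G(x_{n,p}, \cdot)$, which by the mean-value property evaluate to their value at the center $x_{i,p}$; this produces precisely the quantities $-D_h G(x_{n,p}, x_{i,p})$ and $D_j G(x_{n,p}, x_{i,p})$ in the stated tables.

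Finally, for the diagonal case $m = n = i$, both slots carry the same singularity. I would expand the bilinear forms into $G_0\!\cdot\!G_0$, $G_0\!\cdot\!H$, and $H\!\cdot\!H$ pieces. The $H\!\cdot\!H$ part vanishes by Step~1; the $G_0\!\cdot\!G_0$ part is a direct calculation producing the constant $-\tfrac{1}{2\pi}$ in $P_i$; and the symmetrized cross terms $G_0\!\cdot\!H$ reconstruct the gradient and Hessian of the Robin function $R(x) = H(x,x)$ via the chain rule $\nabla R(x) = (\partial_x + \partial_y)H(x, y)\big|_{y=x}$. I expect the only real obstacle to be sign and bookkeeping: one must carefully keep track of which argument of $H$ each derivative acts on, and of the direction conventions $\partial_j G$ versus $D_j G$. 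Compiling Steps 1--3 across the four tables yields the full lemma.
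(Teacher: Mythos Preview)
Your proposal is correct: the paper gives no proof of this lemma at all, simply citing \cite[Proposition 2.4]{4}, and your three-step outline (Pohozaev-type vanishing for pairs of harmonic functions via the divergence theorem, explicit evaluation of the singular part $G_0(y)=-\tfrac{1}{2\pi}\log|y-x_{i,p}|$ on $\partial B_\varrho(x_{i,p})$, and reduction to the mean-value property) is exactly the standard computation carried out in that reference. The only caution is the bookkeeping you already flagged: keeping the conventions $\partial_j$ versus $D_j$ straight and tracking signs through the chain rule $\nabla R(x)=2\nabla_x H(x,y)\big|_{y=x}$ is where errors creep in, so each of the twelve nonzero entries should be checked individually against the explicit boundary formulas for $\nabla G_0$ and $\nabla(\partial_h G_0)$ on the sphere.
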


Now we will introduce the following matrix
\begin{equation}\label{wrons}
W(x,y)=\begin{pmatrix}\partial_{1}^2\Psi & \partial_{1}\partial_{2}\Psi & -D_1 \partial_1 \Psi & -D_1 \partial_2 \Psi\\ \partial_{2}\partial_{1}\Psi & \partial_{2}^2 \Psi & -D_2 \partial_1 \Psi & -D_2 \partial_2 \Psi\\
-\partial_1 D_1  \Psi &  -\partial_2 D_1  \Psi & D_{1}^2\Psi & D_{1}D_{2}\Psi\\
 -\partial_1 D_2 \Psi &  -\partial_2  D_2 \Psi & D_{2}D_{1}\Psi & D_{2}^2\Psi
\end{pmatrix}
\end{equation}
where $\partial_{j}\Psi(x,y):=\frac{\partial \Psi(x,y)}{\partial x_j}$, $D_{j}\Psi(x,y):=\frac{\partial \Psi(x,y)}{\partial y_j}$ for $j=1,2$, $x=(x_1,x_2), y=(y_1,y_2)$ and $\Psi$ is defined in $\eqref{eq-1-13^2}$.
\begin{remark}\label{re-5-6}
$\Psi$ is non-degenerate at $(x^+,x^-)$ if and only if $W(x^+,x^-)$ is non-degenerate, where we use the fact that for matrices $A,B,C\in \mathbb{R}^{2\times2}$,
\begin{align}
  \det\begin{pmatrix}
    A & -C\\
    -C & B
    \end{pmatrix}
    =
    \det\begin{pmatrix}
    A & C\\
    C & B
    \end{pmatrix}.
\end{align}

\end{remark}

\begin{Lemma}\label{Lemma-5.6}
Recall that $a_i$ and $b_{ij}$ are defined in Lemma $\ref{eq-5.2}$, $A_{i,p}$ is defined in $\eqref{eq-5.4}$ and $B_{ij,p}$ is defined in $\eqref{eq-5.5}$.
Suppose $\Psi$ defined in $\eqref{eq-1-13^2}$ is non-degenerate at $(x^+,x^-)$, then
\[a_i=b_{ij}=0,\quad A_{i,p}=o(\varepsilon_p),\quad B_{ij,p}=o(\varepsilon_p),\quad\forall i,j.\]
\end{Lemma}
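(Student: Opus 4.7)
The plan is to apply the Pohozaev identities from Lemma \ref{lemma-5.4} to the pair $(u_p,\xi_p)$, insert the Green-function expansions of $u_p$ (Lemma \ref{B.l1}) and of $\xi_p$ (Lemma \ref{lemma-5.2}), and evaluate the resulting bilinear boundary forms with Lemma \ref{lemma-5.5}. The right-hand sides of these identities are small: on $\partial B_\varrho(x_{i,p})$ we have $u_p=O(1/p)$ by \eqref{pup5}, so the boundary integrals $\int_{\partial B_\varrho}|u_p|^{p-1}u_p\xi_p\,dS$ are exponentially small in $p$; and by \eqref{eq-5-20} the interior integral $\int_{B_\varrho}|u_p|^{p-1}u_p\xi_p\,dy$ is $O(\varepsilon_p/p^2)$. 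Hence both $P_i(u_p,\xi_p)$ and $Q_{ij}(u_p,\xi_p)$ are $o(\varepsilon_p/p)$.

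First I expand $P_i(u_p,\xi_p)$. By Lemma \ref{lemma-5.5}, the dominant contribution is $-C_{i,p}A_{i,p}/(2\pi)$; the $B$-terms contribute $C_{i,p}\sum_h B_{ih,p}\alpha_{ih}(x^*_p)$ for explicit constants $\alpha_{ih}$ built from $\partial_h R(x_{i,p})$ and $D_h G(x_{\bar i,p},x_{i,p})$; and the remainder arising from the $o(\varepsilon_p/p)$ and $o(\varepsilon_p)$ errors in Lemmas \ref{B.l1}, \ref{lemma-5.2} is $o(\varepsilon_p/p)$. Dividing by $C_{i,p}\sim 8\pi\sqrt e/p$ (cf.\ \eqref{pup44}) and using $B_{ih,p}=O(\varepsilon_{i,p})=O(\varepsilon_p)$ from \eqref{B-eq-es}, this yields
\begin{align*}
A_{i,p}=2\pi\sum_h B_{ih,p}\alpha_{ih}(x^*_p)+o(\varepsilon_p)=O(\varepsilon_p).
\end{align*}
Combined with $pA_{i,p}=8\pi a_i+o(1)$ from \eqref{A-eq-es}, this already forces $a_1=a_2=0$.

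Next I expand $Q_{ij}(u_p,\xi_p)$ in the same fashion; the result has the schematic form
\begin{align*}
\sum_k A_{k,p}\beta_{ijk}(x^*_p)+\sum_{k,h}B_{kh,p}\gamma_{ijkh}(x^*_p)=o(\varepsilon_p/p).
\end{align*}
Substituting the expression for $A_{k,p}$ obtained from $P_k$ above, this reduces to a $4\times 4$ linear system $\sum_{k,h}B_{kh,p}\tilde\gamma_{ijkh}(x^*_p)=o(\varepsilon_p)$ for the four unknowns $B_{kh,p}$. Using the identities
\begin{align*}
\partial_h\partial_j\Psi(x,y)&=2\partial_h\partial_j G(x,y)-\partial_h\partial_j R(x),\\
\partial_h D_j\Psi(x,y)&=2\partial_h D_j G(x,y),\\
D_hD_j\Psi(x,y)&=2D_hD_j G(x,y)-\partial_h\partial_j R(y),
\end{align*}
together with the symmetry $D_j G(y,x)=\partial_j G(x,y)$, one verifies that the matrix $[\tilde\gamma_{ijkh}(x^+,x^-)]$ is a nonzero constant multiple of the Wronskian $W(x^+,x^-)$ from \eqref{wrons}. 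By hypothesis and Remark \ref{re-5-6}, $W(x^+,x^-)$ is invertible, so $B_{kh,p}=o(\varepsilon_p)$ and hence $b_{kh}=0$ via \eqref{B-eq-es}. Feeding $B_{ih,p}=o(\varepsilon_p)$ back into the formula for $A_{i,p}$ in the first step gives $A_{i,p}=o(\varepsilon_p)$, completing the claim.

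The main obstacle is the identification, in the second step, of the reduced $4\times 4$ matrix $[\tilde\gamma_{ijkh}]$ with $W(x^+,x^-)$: this is essentially a careful bookkeeping exercise involving the sign $\epsilon_2=-1$ from $u_p\sim C_{1,p}G(x_{1,p},\cdot)-C_{2,p}G(x_{2,p},\cdot)$, repeated use of the symmetry relations above, and the passage from the $Q_{ij}$ and (eliminated) $P_i$ entries to the second-derivative block structure of $\Psi$. A minor technical nuisance is that $\varepsilon_{1,p}$ and $\varepsilon_{2,p}$ are only comparable, with ratio tending to $e^{2\pi(R(x^+)-R(x^-))}$ by Theorem \ref{th2}; this merely rescales rows of the linear system and does not affect its rank.
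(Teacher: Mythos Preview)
Your proposal is correct and follows the same overall strategy as the paper: expand both sides of the Pohozaev identities of Lemma~\ref{lemma-5.4} via the Green-function representations of Lemmas~\ref{B.l1} and~\ref{lemma-5.2}, evaluate with Lemma~\ref{lemma-5.5}, and extract a linear system for $(A_{i,p},B_{ij,p})$.

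There is, however, one simplification you overlook which the paper exploits. The coefficients you call $\alpha_{ih}$ are (after normalizing by $C_{i,p}$) precisely $\tfrac12\partial_h R(x_{i,p})-\tfrac{C_{i^*,p}}{C_{i,p}}D_hG(x_{i^*,p},x_{i,p})$, and these converge to $\tfrac12\partial_h R(x_i)-D_hG(x_{i^*},x_i)$, which is $-\tfrac12$ times the $i$-block gradient component of $\Psi$ at $(x^+,x^-)$ and therefore \emph{vanishes} by Theorem~\ref{thm-LE}(2). Hence the $B$-contribution in $P_i(u_p,\xi_p)$ is already $o(\varepsilon_p/p)$, and the $P_i$-identity alone yields $A_{i,p}=o(\varepsilon_p)$ (and $a_i=0$) with no substitution at all. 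Once $A_{i,p}=o(\varepsilon_p)$, the $A$-terms in the $Q_{ij}$ expansion are also $o(\varepsilon_p/p)$ and can simply be discarded, so the $4\times4$ system for $(B_{kh,p}/\varepsilon_p)$ is governed directly by $W(x_p^*)\to W(x^+,x^-)$, with no elimination step required. Your route works too, but only because the substituted $\beta\alpha$-contribution to $\tilde\gamma$ vanishes in the limit---which is again criticality. In short, the ``bookkeeping'' obstacle you flag is really the criticality condition from Theorem~\ref{thm-LE}, and invoking it one step earlier decouples the $P$- and $Q$-identities cleanly.
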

\begin{proof} Let $\varrho\in (0, \frac{\varrho_0}{2}]$.
By Lemma \ref{B.l1} and Lemma \ref{lemma-5.2}, we have 
\begin{align}\label{pup46}
    P_{i}(u_p,\xi_p)&=\sum\limits_{m=1}^2\sum\limits_{n=1}^2 (-1)^{n+1} A_{m,p}C_{n,p} P_i(G(x_{n,p},y),G(x_{m,p},y))\\+\sum\limits_{m=1}^2&\sum\limits_{n=1}^2\sum\limits_{j=1}^2 (-1)^{n+1} C_{n,p}B_{mj,p} P_i(G(x_{n,p},y),\partial_jG(x_{m,p},y))+o\left(\frac{\varepsilon_p}{p}\right).\nonumber
\end{align}
We claim that
\begin{align}\label{pup40}
\sum\limits_{m=1}^2\sum\limits_{n=1}^2\sum\limits_{j=1}^2 (-1)^{n+1} C_{n,p}B_{mj,p} P_i(G(x_{n,p},y),\partial_jG(x_{m,p},y))=o\left(\frac{\varepsilon_p}{p}\right).
\end{align}
We only prove \eqref{pup40} for $i=1$ (the case $i=2$ can be proved similarly). By Lemma \ref{lemma-5.5} we have
\begin{align}
\text{LHS of \eqref{pup40}}=\sum_{j=1}^2B_{1j,p}\left(\frac{C_{1,p}}{2}\partial_jR(x_{1,p})-C_{2,p}D_jG(x_{2,p},x_{1,p})\right).
\end{align}
Since \eqref{pup44} and Theorem A-(2) imply
\begin{align*}&p\left(\frac{C_{1,p}}{2}\partial_jR(x_{1,p})-C_{2,p}D_jG(x_{2,p},x_{1,p})\right)\\
\to& 8\pi\sqrt{e}\left(\frac{1}{2}\partial_jR(x^+)-D_jG(x^-,x^+)\right)=0,\quad\text{as }p\to\infty,\end{align*}
we conclude from $B_{1j,p}=O(\varepsilon_p)$ that \eqref{pup40} with $i=1$ holds.

By  \eqref{pup44}, \eqref{A-eq-es}, \eqref{pup40} and Lemma \ref{lemma-5.5}, we see from \eqref{pup46} that
\begin{align}\label{eq-5-31}
    P_{i}(u_p,\xi_p)= \frac{(-1)^{i}}{2\pi}A_{i,p}C_{i,p}+o\left(\frac{\varepsilon_p}{p}\right)=\frac{(-1)^{i}32\pi\sqrt{e}a_i}{p^2}+o\left(\frac{1}{p^2}\right).
\end{align}
Meanwhile, by \eqref{pup5}, \eqref{eq-5-20} and \eqref{eq-5-24}, we have 
\begin{align}\label{eq-5-32}
    P_{i}(u_p,\xi_p)=O\left(\frac{C^{p}}{p^{p}}\right)+O\left(\frac{\varepsilon_p}{p^2}\right)=o\left(\frac{1}{p^2}\right).
\end{align}
Hence, \eqref{eq-5-31}-\eqref{eq-5-32} imply $a_i=0$ and $A_{i,p}=o(\varepsilon_{p})$.

To prove $b_{ij}=0$, we see from Lemma \ref{B.l1} and Lemma \ref{lemma-5.2} that (note $A_{m,p}=o(\varepsilon_p)$ and $C_{n,p}=O(1/p)$)
\begin{align}\label{eq-5-33}
    Q_{ij}(u_p,\xi_p)=&\sum\limits_{m=1}^2\sum\limits_{n=1}^2 (-1)^{n+1} A_{m,p}C_{n,p} Q_{ij}(G(x_{n,p},y),G(x_{m,p},y))\notag\\
    +\sum\limits_{h=1}^2&\sum\limits_{m=1}^2\sum\limits_{n=1}^2 (-1)^{n+1} B_{mh,p}C_{n,p} Q_{ij}(G(x_{n,p},y),\partial_h G(x_{m,p},y))+o\left(\frac{\varepsilon_p}{p}\right)\notag\\
    =\sum\limits_{h=1}^2&\sum\limits_{m=1}^2\sum\limits_{n=1}^2 (-1)^{n+1} B_{mh,p}C_{n,p} Q_{ij}(G(x_{n,p},y),\partial_h G(x_{m,p},y))+o\left(\frac{\varepsilon_p}{p}\right).
\end{align}
Meanwhile,  \eqref{pup5} and \eqref{eq-5-25} together imply 
\begin{align}\label{eq-5-36}
    Q_{ij}(u_p,\xi_p)=O\left(\frac{C^{p}}{p^{p}}\right)=o\left(\frac{\varepsilon_p}{p}\right).
\end{align}
Hence, from \eqref{eq-5-33}-\eqref{eq-5-36} and Lemma \ref{lemma-5.2}, we obtain
\begin{align}
    \sum\limits_{h=1}^2&\left((-1)^{i+1} B_{ih,p}C_{i,p}\partial_h\partial_jR(x_{i,p})
    +(-1)^{i^*+1} B_{ih,p}C_{i^*,p}D_h D_j G(x_{i^*,p},x_{i,p})\right.\notag\\
    &+\left.(-1)^{i+1} B_{i^*h,p}C_{i,p}\partial_h D_j G(x_{i^*,p},x_{i,p})\right)=o\left(\frac{\varepsilon_p}{p}\right), 
\end{align}
where $\{i,i^*\}=\{1,2\}$. Hence,
\begin{align}\label{pup47}
    \sum\limits_{h=1}^2&\left(\frac{B_{ih,p}}{\varepsilon_p}\partial_h\partial_j\left(R(x_{i,p})-2\frac{C_{i^*,p}}{C_{i,p}}G(x_{i,p},x_{i^*,p})\right)\right.\notag\\
    &+\left.2\frac{B_{i^*h,p}}{\varepsilon_p}\partial_j D_h G(x_{i,p},x_{i^*,p})\right)=o(1),\quad\forall i,j.
\end{align}
Since $\lim_{p\to\infty}\frac{C_{i^*,p}}{C_{i,p}}=1$ and $\Psi$ is non-degenerate at $(x^+,x^-)$, i.e. the matrix $W(x^+,x^-)$ is non-degenerate, it follows from \eqref{pup47} that
$B_{ij,p}=o(\varepsilon_p)$ for any $i,j\in \{1,2\}$. 
Since \eqref{eq-4-65} says that \begin{align}\label{pup50}|\log \varepsilon_{i,p}|=\frac{p}{4}+O(1),\end{align} so $\frac1C\leq |\frac{\varepsilon_{1,p}}{\varepsilon_{2,p}}|\leq C$ for some $C>1$, and then we conclude from
\eqref{B-eq-es} that
\begin{align*}
   b_{ij}=\frac{\varepsilon_p}{\varepsilon_{i,p}}o(1)=o(1),
\end{align*}
i.e. $b_{ij}=0$ and so $B_{ij,p}=o(\varepsilon_p)$ for all $i,j$.
\end{proof}

\begin{corollary}\label{cor-5.7}
Fix any $\varrho\in (0, \frac{\varrho_0}{2}]$ and $R>0$, we have
\begin{align}
    \Vert\xi_{p}\Vert_{L^{\infty}(\overline{\Omega}\setminus\cup_{i=1}^2 B_{2\varrho}(x_{i,p}))}=o(\varepsilon_p)\quad\text{ and }\quad \Vert\xi_{p}\Vert_{L^{\infty}(\cup_{i=1}^2B_{\varepsilon_{i,p}R}(x_{i,p}))}=o(1).
\end{align}
\end{corollary}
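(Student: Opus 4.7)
The plan is to combine the two expansions in Lemmas~\ref{eq-5.2} and~\ref{lemma-5.2} with the vanishing of the leading order coefficients furnished by Lemma~\ref{Lemma-5.6}. The non-degeneracy hypothesis on $\Psi$ has already been used (only) in Lemma~\ref{Lemma-5.6}; once we know $a_i=b_{ij}=0$ and $A_{i,p}=B_{ij,p}=o(\varepsilon_p)$ for $i,j\in\{1,2\}$, both claimed $L^{\infty}$ bounds follow essentially for free.

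For the outer estimate, I would feed Lemma~\ref{Lemma-5.6} into the expansion of Lemma~\ref{lemma-5.2},
\[
\xi_p(x)=\sum_{i=1}^{2}\left(A_{i,p}G(x_{i,p},x)+\sum_{j=1}^{2}B_{ij,p}\partial_{j}G(x_{i,p},x)\right)+o(\varepsilon_p).
\]
Since $x_{i,p}\to x_i$ with $x_1\neq x_2$, for $p$ large the set $\overline{\Omega}\setminus\cup_{i=1}^{2}B_{2\varrho}(x_{i,p})$ is compact and stays uniformly bounded away from both $x_{1,p}$ and $x_{2,p}$; consequently $G(x_{i,p},\cdot)$ and $\partial_{j}G(x_{i,p},\cdot)$ are uniformly bounded on it with constants independent of $p$. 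Combining this with $A_{i,p},\,B_{ij,p}=o(\varepsilon_p)$ yields $\|\xi_p\|_{L^{\infty}(\overline{\Omega}\setminus\cup_{i=1}^{2}B_{2\varrho}(x_{i,p}))}=o(\varepsilon_p)$. The only point that deserves a line of justification is that the expansion in Lemma~\ref{lemma-5.2} is in fact uniform (not merely locally uniform) across this region; this is immediate from compactness together with the observation that the error terms in its proof depend only on the distance $\geq 2\varrho$ from the concentration points.

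For the inner estimate, I would invoke Lemma~\ref{eq-5.2}: since $a_i=b_{ij}=0$ by Lemma~\ref{Lemma-5.6}, the limiting function in the rescaled expansion is identically zero, so $\xi_{i,p}\to 0$ in $C^{2}_{loc}(\mathbb{R}^2)$, which for any fixed $R>0$ gives $\|\xi_{i,p}\|_{L^{\infty}(\overline{B_R(0)})}=o(1)$. Undoing the rescaling $\xi_p(x)=\xi_{i,p}((x-x_{i,p})/\varepsilon_{i,p})$ and taking the union over $i=1,2$ yields $\|\xi_p\|_{L^{\infty}(\cup_{i=1}^{2}B_{\varepsilon_{i,p}R}(x_{i,p}))}=o(1)$. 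There is no serious obstacle at the level of this corollary itself; the real work was done upstream in Lemma~\ref{Lemma-5.6} through the Pohozaev quadratic forms and the non-degeneracy of the matrix $W(x^+,x^-)$.
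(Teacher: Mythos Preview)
Your proposal is correct and follows exactly the route indicated by the paper, which records the corollary as a direct consequence of Lemmas~\ref{eq-5.2}, \ref{lemma-5.2} and \ref{Lemma-5.6}. You have simply spelled out the one-line deduction: the outer bound comes from the expansion \eqref{eq-5-3} together with $A_{i,p},B_{ij,p}=o(\varepsilon_p)$ and the uniform boundedness of $G(x_{i,p},\cdot)$, $\partial_jG(x_{i,p},\cdot)$ on the compact set $\overline{\Omega}\setminus\cup_i B_{2\varrho}(x_{i,p})$, while the inner bound comes from \eqref{eq-5-2} with $a_i=b_{ij}=0$ and the $C^2_{\loc}$ convergence.
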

\begin{proof}
This is a direct consequence of Lemmas \ref{eq-5.2}-\ref{lemma-5.2} and \ref{Lemma-5.6}.
\end{proof}

\begin{Lemma}\label{Lemma5-8}
Fix any $\varrho\in (0, \frac{\varrho_0}{2}]$, we have
\begin{align}\label{pup56}
    \Vert\xi_{p}\Vert_{L^{\infty}\left(\cup_{i=1}^2
     B_{2\varrho}(x_{i,p})\setminus B_{2p \varepsilon_{i,p}}(x_{i,p})\right) }=o(1).
\end{align}
\end{Lemma}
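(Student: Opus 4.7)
The plan is to extend the Green-representation argument of Lemma \ref{lemma-5.2} into the intermediate annular region. Take $x\in B_{2\varrho}(x_{1,p})\setminus B_{2p\varepsilon_{1,p}}(x_{1,p})$ (the other case is symmetric) and write $r_p := |x-x_{1,p}|$, so $2p\varepsilon_{1,p}\leq r_p\leq 2\varrho$. Starting from $\xi_p(x)=p\int_{\Omega}G(y,x)|u_p|^{p-1}\xi_p\,dy$, I would split the integral into pieces over $B_\varrho(x_{1,p})$, $B_\varrho(x_{2,p})$, and the complement. The complement contributes $O(C^{p-1}/p^{p-2})$ via \eqref{pup5} (the log-singularity of $G$ at $y=x$ being integrable in 2D). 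On $B_\varrho(x_{2,p})$ the distance $|y-x|$ is bounded below, so Taylor-expanding $G(y,x)$ in $y$ around $x_{2,p}$ to first order gives leading terms $A_{2,p}G(x_{2,p},x)+\sum_j B_{2j,p}\partial_jG(x_{2,p},x)=o(\varepsilon_p)\cdot O(1)=o(1)$ by Lemma \ref{Lemma-5.6}, with the quadratic remainder controlled exactly as in \eqref{eq-5-8}.

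The delicate piece is the integral over $B_\varrho(x_{1,p})$. Decompose $G(y,x)=-\frac{1}{2\pi}\log|y-x|+H(y,x)$. Since $H$ is smooth on the relevant compact region, its contribution is treated as in the previous step and is $o(1)$. For the singular part, rescale $y=x_{1,p}+\varepsilon_{1,p}z$ and set $\tilde x:=(x-x_{1,p})/\varepsilon_{1,p}$ (so $|\tilde x|\geq 2p$); using $p\varepsilon_{1,p}^2|u_p(x_{1,p})|^{p-1}=1$ together with \eqref{hao chu}, and the decomposition $\log|z-\tilde x|=\log|\tilde x|+\log|1-z/\tilde x|$, this piece equals
\[
-\frac{\log\varepsilon_{1,p}+\log|\tilde x|}{2\pi}\,A_{1,p}\;-\;\frac{1}{2\pi}\int_{B_{\varrho/\varepsilon_{1,p}}(0)}\log\!\bigl|1-z/\tilde x\bigr|\,\Bigl|1+\tfrac{v_{1,p}(z)}{p}\Bigr|^{p-1}\xi_{1,p}(z)\,dz.
\]
Since $|\log\varepsilon_{1,p}|=p/4+O(1)$ by \eqref{pup50}, $|\log|\tilde x||=O(p)$, and $A_{1,p}=o(\varepsilon_p)$ by Lemma \ref{Lemma-5.6}, the first term is $O(p)\cdot o(\varepsilon_p)=o(p\varepsilon_p)=o(1)$ because $p\varepsilon_p\to 0$ exponentially.

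To finish, the remaining integral is split at $|z|=|\tilde x|/2$. On $|z|\leq|\tilde x|/2$ the elementary bound $|\log|1-z/\tilde x||\leq C|z|/|\tilde x|\leq C|z|/(2p)$ combined with the integrability of $|z|\cdot(1+|z|)^{-(4-\delta)}$ from Corollary \ref{re52} yields a contribution of order $1/p=o(1)$. On $|z|>|\tilde x|/2$ the strong decay $|1+v_{1,p}/p|^{p-1}\leq C_\delta/(1+|z|^{4-\delta})$ together with $|\tilde x|\geq 2p$ controls both the locally integrable log singularity at $z=\tilde x$ and the logarithmic growth outside, giving a bound of order $p^{-(2-\delta)}\log p=o(1)$. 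Assembling all the pieces yields $\xi_p(x)=o(1)$ uniformly in the annular region, as required. The main technical obstacle is the bookkeeping in this last splitting—matching the log singularity at $z=\tilde x$ against the polynomial decay—but the hypothesis $|\tilde x|\geq 2p\to\infty$ provides ample room and makes the argument robust; every other estimate mirrors computations already done in Lemmas \ref{lemma-5.2}--\ref{Lemma-5.6}.
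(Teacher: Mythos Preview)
Your argument is correct and follows essentially the same strategy as the paper: Green representation for $\xi_p$, isolation of the term proportional to $A_{1,p}$ (which is $o(\varepsilon_p)$ by Lemma~\ref{Lemma-5.6}), and control of the remainder via the polynomial decay of Corollary~\ref{re52}. The only difference is organizational: the paper first splits the domain at $|y-x_{i,p}|=p\varepsilon_{i,p}$, Taylor-expands the full Green function on the inner ball, and handles the outer annulus by quoting \cite[(4.40)]{4}, whereas you first separate $G=-\tfrac{1}{2\pi}\log+H$ and then split the rescaled logarithmic integral at $|z|=|\tilde x|/2$; your version has the mild advantage of being self-contained.
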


\begin{proof} Fix any $x\in B_{2\varrho}(x_{i,p})\setminus B_{2p \varepsilon_{i,p}}(x_{i,p})$.
Similarly as \eqref{pup31},
\begin{align}\label{eq-5-46}
    \xi_p(x)
    =\sum\limits_{i=1}^2 &p\int_{B_{p\varepsilon_{i,p}}(x_{i,p})}G(y,x)\left|u_p(y)\right|^{p-1}\xi_p(y)dy\notag\\
    &+\sum\limits_{i=1}^2 p\int_{B_{\varrho}(x_{i,p})\setminus B_{p\varepsilon_{i,p}}(x_{i,p})}G(y,x)\left|u_p(y)\right|^{p-1}\xi_p(y)dy+o(\varepsilon_p).
\end{align}
Since $G(y,x)=O(|\log|y-x||)$ for $y\in B_{\varrho}(x_{i,p})$,
it follows from \eqref{pup32} that
\begin{align*}
    &p\int_{B_{\varrho}(x_{i,p})\setminus B_{p\varepsilon_{i,p}}(x_{i,p})}G(y,x)\left|u_p(y)\right|^{p-1}\xi_p(y)dy\\
    =&O\left(p\int_{B_{\varrho}(x_{i,p})\setminus B_{p\varepsilon_{i,p}}(x_{i,p})}|\log|y-x||\left|u_p(y)\right|^{p-1}dy\right)\\
    =&O\left(\int_{B_{\frac{\varrho}{\varepsilon_{i,p}}}(0)\setminus B_{p}(0)}|\log|\varepsilon_{i,p}z+x_{i,p}-x||\left|1+\frac{v_{i,p}(z)}{p}\right|^{p-1}dy\right)\\
    =&O\left(\int_{B_{\frac{\varrho}{\varepsilon_{i,p}}}(0)\setminus B_{p}(0)}\frac{|\log \varepsilon_{i,p}|}{1+|z|^{4-\delta}}dz\right)+
    O\left(\int_{B_{\frac{\varrho}{\varepsilon_{i,p}}}(0)\setminus B_{p}(0)}\frac{\left|\log\left|z-\frac{x-x_{i,p}}{\varepsilon_{i,p}}\right|\right|}{1+|z|^{4-\delta}}dz\right).
\end{align*}
By \eqref{pup50} we have
\begin{align}\label{eq-5-48}
    \int_{B_{\frac{\varrho}{\varepsilon_{i,p}}}(0)\setminus B_{p}(0)}\frac{|\log \varepsilon_{i,p}|}{1+|z|^{4-\delta}}dz=O\left(\frac{1}{p^{1-\delta}}\right).
\end{align}
Furthermore, since $2p\varepsilon_{i,p}\leq |x-x_{i,p}|\leq 2\varrho$, it was proved in \cite[(4.40)]{4} that
\begin{align}\label{eq-5-51}
     \int_{B_{\frac{\varrho}{\varepsilon_{i,p}}}(0)\setminus B_{p}(0)}\frac{\left|\log\left|z-\frac{x-x_{i,p}}{\varepsilon_{i,p}}\right|\right|}{1+|z|^{4-\delta}}dz=O\left(\frac{1}{p^{1-\delta}}\right).
\end{align}
Thus
\begin{align}\label{eq-5-47}
p\int_{B_{\varrho}(x_{i,p})\setminus B_{p\varepsilon_{i,p}}(x_{i,p})}G(y,x)\left|u_p(y)\right|^{p-1}\xi_p(y)dy=O\left(\frac{1}{p^{1-\delta}}\right).
\end{align}
On the other hand, we have 
\begin{align}\label{eq-5-52}
    &p\int_{B_{p\varepsilon_{i,p}}(x_{i,p})}G(y,x)\left|u_p(y)\right|^{p-1}\xi_p(y)dy\notag\\
    =&G(x_{i,p},x)p\int_{B_{p\varepsilon_{i,p}}(x_{i,p})}\left|u_p(y)\right|^{p-1}\xi_p(y)dy\notag\\
    &+p\int_{B_{p\varepsilon_{i,p}}(x_{i,p})}\left(G(y,x)-G(x_{i,p},x)\right)\left|u_p(y)\right|^{p-1}\xi_p(y)dy.
\end{align}
Since $2p\varepsilon_{i,p}\leq |x-x_{i,p}|\leq 2\varrho\ll 1$, by \eqref{eq-5.4}, $A_{i,p}=o(\varepsilon_p)$ and \eqref{pup32}, we have
\begin{align}\label{eq-5-53}
   &G(x_{i,p},x)p\int_{B_{p\varepsilon_{i,p}}(x_{i,p})}\left|u_p(y)\right|^{p-1}\xi_p(y)dy\notag\\
    =&G(x_{i,p},x)\left(A_{i,p}-\int_{B_{\frac{\varrho}{\varepsilon_{i,p}}}(0)\setminus B_p(0)}\left|1+\frac{v_{i,p}(z)}{p}\right|^{p-1}\xi_{i,p}(z)dz\right)\notag\\
    =&O\left(\log|p\varepsilon_{i,p}|\right)\left(o(\varepsilon_p)+O\left(\frac{1}{p^{2-\delta}}\right)\right)=o(1).
\end{align}
Since $\nabla_y G(y,x)=O(\frac{1}{|y-x|})$, we have for $y\in B_{p\varepsilon_{i,p}}(x_{i,p})$ that
\begin{align*}
      G(y,x)-G(x_{i,p},x)=&\nabla_y G(\upsilon_p y+(1-\upsilon_p)x_{i.p},x)(y-x_{i,p})\\
   =&O\left(\frac{|y-x_{i,p}|}{|(\upsilon_p y+(1-\upsilon_p)x_{i.p})-x|}\right)=O\left(\frac{|y-x_{i,p}|}{p\varepsilon_{i,p}}\right),
\end{align*}
where $\upsilon_p \in (0,1)$ and we used $|(\upsilon_p y+(1-\upsilon_p)x_{i.p})-x|\geq |x-x_{i,p}|-|y-x_{i,p}|\geq p\varepsilon_{i,p}$ in the last equality. It follows that
\begin{align}\label{eq-5-54}
    &p\int_{B_{p\varepsilon_{i,p}}(x_{i,p})}\left(G(y,x)-G(x_{i,p},x)\right)\left|u_p(y)\right|^{p-1}\xi_p(y)dy\notag\\
    =&O\left(\frac{1}{p\varepsilon_{i,p}}\right)O\left(\varepsilon_{i,p}\int_{ B_p(0)}|z|\left|1+\frac{v_{i,p}(z)}{p}\right|^{p-1}dz\right)=O\left(\frac{1}{p}\right).
\end{align}
Thus, from \eqref{eq-5-46} and  \eqref{eq-5-47}-\eqref{eq-5-54}, we obtain the desired estimate \eqref{pup56}.
\end{proof}

Now we are in the position to prove Theorem \ref{thm-1.6}.

\begin{proof}[Proof of Theorem \ref{thm-1.6}] Let $x_p$ be the maximum point of $|\xi_p|$ in $\Omega$, and without loss of generality, we may assume $\xi_p(x_p)>0$, i.e. 
\[\xi_p(x_p)=1=\|\xi_p\|_{L^\infty}.\]
Then by Corollary \ref{cor-5.7} and Lemma \ref{Lemma5-8}, we have 
\begin{align}
    x_p\in B_{2p \varepsilon_{i,p}}(x_{i,p})\setminus B_{ \varepsilon_{i,p}R}(x_{i,p})\quad\text{for some $i\in\{1,2\}$}.
\end{align}
Denote $r_p=|x_p-x_{i,p}|$ such that $R \varepsilon_{i,p}\leq r_p\leq 2p \varepsilon_{i,p}$. Define 
\begin{align}\label{pup66}
    \xi_{i,p}^*(z):=\xi_p(r_p z+x_{i,p})=\xi_{i,p}\left(\frac{r_p }{\varepsilon_{i,p}}z\right),
\end{align}
which satisfies 
\begin{align}
    -\Delta \xi_{i,p}^*(z)=\left(\frac{r_p}{\varepsilon_{i,p}}\right)^2\left|1+\frac{v_{i,p}(\frac{r_p }{\varepsilon_{i,p}}z)}{p}\right|^{p-1}\xi_{i,p}^*(z).
\end{align}
Since $R \varepsilon_{i,p}\leq r_p\leq 2p \varepsilon_{i,p}$, by \eqref{pup32} we have
\begin{align}
    \left(\frac{r_p}{\varepsilon_{i,p}}\right)^2\left|1+\frac{v_{i,p}(\frac{r_p }{\varepsilon_{i,p}}z)}{p}\right|^{p-1}=O\left(\frac{\varepsilon_{i,p}^{2-\delta} }{r_p^{2-\delta}\left|z\right|^{4-\delta}}\right),\quad\forall |z|\leq \frac{\varrho_0}{r_p}.
\end{align}
Since $R$ can be take arbitarily large,
by standard elliptic estimates, up to a subsequence we have $\xi_{i,p}^* \rightarrow \xi_0^*$ in $C_{loc}^2(\mathbb{R}^2\setminus\{0\})$ such that 
$\Delta \xi_0^*=0$ in $\mathbb{R}^2\setminus\{0\}$. Since $|\xi_0^*|\leq 1$, the singularity $0$ is removable. By $\xi_{i,p}^*\left(\frac{x_p-x_{i,p}}{r_p}\right)=1$, it follows that $\xi_0^*\equiv 1$, namely
\begin{equation} \label{pup67}
\xi_{i,p}^* \rightarrow 1\quad\text{in }C_{loc}(\mathbb{R}^2\setminus\{0\}).
\end{equation}

On the other hand, recall Lemma $\ref{eq-5.2}$ that $\xi_{i,p}$ satisfies that
\begin{align}
    -\Delta\xi_{i,p}-e^U\xi_{i,p}=f_p:=\left(\left|1+\frac{v_{i,p}}{p}\right|^{p-1}-e^U\right)\xi_{i,p}.
\end{align}
By $v_{i,p}\to U$ in $C_{loc}^2(\mathbb{R}^2)$ and Lemma \ref{prop44}, the same argument as \cite[p.184]{4} shows
\begin{align}\label{pup88}
f_p(z)=O\left(\frac{1}{p(1+|z|^{4-\delta})}\right),\quad\forall |z|\leq \frac{\varrho_0}{\varepsilon_{i,p}}.
\end{align}

Write $\xi_{i,p}(z)=\xi_{i,p}(|z|,\theta)$ for $z=|z|e^{i\theta}$, we define
\begin{align}
    \widetilde{\xi}_{i,p}(|z|):=\frac1{2\pi}\int_0^{2\pi}\xi_{i,p}(|z|,\theta) d\theta,
\end{align}
which sloves 
\begin{align}
     -\left(\widetilde{\xi}_{i,p}\right)''-\frac{1}{r}\left(\widetilde{\xi}_{i,p}\right)'-e^U\widetilde{\xi}_{i,p}=\widetilde{f}_p:=\frac1{2\pi}\int_0^{2\pi}f_{p}(|z|,\theta) d\theta.
\end{align}
By standard ODE's theory, there exist $c_{1,p},c_{2,p}\in\mathbb{R}$ such that
\begin{align}
    \widetilde{\xi}_{i,p}=c_{1,p}\widetilde{\psi}_1+c_{2,p}\widetilde{\psi}_2+V,
\end{align}
where 
\begin{align}\label{vrr0}
    \widetilde{\psi}_1(r)=\frac{8-r^2}{8+r^2}\quad\text{ and }\quad\widetilde{\psi}_2(r)=\frac{(8-r^2)\log r+16}{8+r^2}
    \end{align}
    are two linearly independent solutions of $-\psi''-\frac1r \psi'-e^U\psi=0$, and
    \begin{align}\label{vrr}
    V(r)=\widetilde{\psi}_1(r)\int_0^r s \widetilde{\psi}_2(s) \widetilde{f}_p(s)ds-\widetilde{\psi}_2(r)\int_0^r s \widetilde{\psi}_1(s) \widetilde{f}_p(s)ds.
\end{align}
Since $\widetilde{\xi}_{i,p}$ is bounded and $V(0)=0$, we have $c_{2,p}=0$. Then by Corollary $\ref{cor-5.7}$, we get 
\begin{align*}
    o(1)=\xi_{i,p}(0)= \widetilde{\xi}_{i,p}(0)=c_{1,p}\widetilde{\psi}_1(0)=c_{1,p},
\end{align*}
i.e. $c_{1,p}=o(1)$. Note from \eqref{pup88} that
\begin{align*}
    \widetilde{f}_p=O\left(\frac{1}{p(1+|z|^{4-\delta})}\right),\quad \forall |z|\leq \frac{\varrho_0}{\varepsilon_{i,p}},
\end{align*}
which together \eqref{vrr0}-\eqref{vrr} shows that 
\begin{align}\label{eq-5-66}
    \left|\widetilde{\xi}_{i,p}\left(\frac{r_p}{\varepsilon_{i,p}}\right)\right|=\left|V\left(\frac{r_p}{\varepsilon_{i,p}}\right)\right|+o(1)=O\left(\frac{\log{\frac{r_p}{\varepsilon_{i,p}}}}{p}\right)+o(1)=o(1).
\end{align}
However, by \eqref{pup66} and \eqref{pup67} we have
\begin{align*}
    \widetilde{\xi}_{i,p}\left(\frac{r_p}{\varepsilon_{i,p}}\right)=\frac1{2\pi}\int_0^{2\pi}\xi_{i,p}\left(\frac{r_p}{\varepsilon_{i,p}},\theta\right) d\theta=\frac1{2\pi}\int_0^{2\pi}\xi_{i,p}^*\left(1,\theta\right) d\theta= 1+o(1),
\end{align*}
which is a contradiction with $\eqref{eq-5-66}$. The proof is complete.
\end{proof}

\begin{remark}
Here is an example that if $(x^+,x^-)$ is a degenerate critical point of $\Psi$, then $u_p$ may be degenerate. Consider the case that $\Omega=B_1(0)=\{x : |x|<1\}$ is the unit ball and let $u_p$ be a least energy nodal solution of \eqref{p_p}. Then it was proved in \cite{21} that $u_p$ is foliated Schwarz symmetric and in particular, not radially symmetric. Since any rotation $u_p(Ax)$ is still a nodal solution of \eqref{p_p} (where $A \in SO(2)$), we conclude that $u_p(x)$ is degenerate for any $p$.

On the other hand, let $x^+, x^-$ are the concentrate points of $u_p$ as stated in Theorem A. Since we have proved 
that the condition $p(\|u_p^+\|_{\infty}-\|u_p^-\|_{\infty})=O(1)$ holds, it follows from \cite[Theorem 5]{14} that
there exists $\theta_0\in [0,2\pi)$ such that 
\begin{align}
    x^+=(r^*\cos{\theta_0}, r^*\sin{\theta_0}) \quad \text{and} \quad x^-=(-r^*\cos{\theta_0}, -r^*\sin{\theta_0}),
\end{align}
where $r^*= \sqrt{-2+\sqrt{5}}$. 
Again by taking rotations, we can assume $\theta_0=0$, i.e. the concentrate points are
\[
   x^+=\left(\sqrt{-2+\sqrt{5}},0\right) \quad \text{and} \quad x^-=\left(-\sqrt{-2+\sqrt{5}},0\right).
\]
Since the Green function $G(x,y)$ on $B_1(0)$ is known, by a direct calculation, we get
\[
   W(x^+,x^-)=\frac{1}{8\pi}\begin{pmatrix}-13-5\sqrt{5} & 0 & -7-3\sqrt{5} & 0\\ 0 & -1-\sqrt{5} & 0 & 1+\sqrt{5} \\
   -7-3\sqrt{5} & 0 & -13-5\sqrt{5} & 0\\
 0 & 1+\sqrt{5} & 0 & -1-\sqrt{5}
\end{pmatrix},
\]
and so $\det W(x^+,x^-)=0$.
This, together with Remark \ref{re-5-6}, shows that $(x^+,x^-)$ is a degenerate critial point of $\Psi$.
\end{remark}

\subsection*{Acknowledgements} The research of Z. Chen was supported by NSFC (No. 12222109, 12071240).

\end{document}